\colorlet{ggrey}{black!50}
 \theoremstyle{plain}
\newtheorem{thm}{Theorem}[section]
\newtheorem{theo}{Theorem}
\newtheorem{lemma}[thm]{Lemma}
\newtheorem{prop}[thm]{Proposition}
\theoremstyle{definition}
\newtheorem{remark}[thm]{Remark}
\newtheorem{example}[thm]{Example}
\numberwithin{equation}{section}
\def\sA{\mathsf{A}}
\def\sD{\mathsf{D}}
\def\sE{\mathsf{E}}
\def\sF{\mathsf{F}}
\def\cI{\mathcal{I}}
\def\cL{\mathcal{L}}
\def \cP{\mathcal{P}}
\def\cS{\mathcal{S}}
\def\FF{\mathbb{F}}
\def\HH{\mathbb{H}}
\def\KK{\mathbb{K}}
\def\LL{\mathbb{L}}
\def\K{\mathbb{K}}
\def\PG{\mathsf{PG}}
\def\kar{\mathsf{char}}
\DeclareMathOperator\Res{\mathsf{Res}}
\def\Aut{\mathsf{Aut}}
\DeclareMathOperator\disp{\mathsf{Disp}}
\def\id{\mathsf{id}}
\def\<{\langle}
\def\>{\rangle}
\renewcommand{\@makefnmark}{\mbox{\textsuperscript{}}}
\title{Automorphisms of Lie incidence geometries with spectral gaps}
\author{Yannick Neyt \and James Parkinson 
\and
Hendrik Van Maldeghem}
\date{\today}
\thanks{The first and third author are supported by the Fund for Scientific Research, Flanders, through the project G023121N}
\begin{document}

\begin{abstract} An automorphism of a building is called \textit{uniclass} if the Weyl distance between any chamber and its image lies in a unique (twisted) conjugacy class of the Coxeter group. In a previous paper we characterised uniclass automorphisms of spherical buildings in terms of their fixed structure. In the present paper we restrict to the simply laced case and characterise uniclass automorphisms in terms of a spectral gap property. More precisely, we show that an automorphism of a thick irreducible spherical building of simply laced type is uniclass if and only if no point of the long root subgroup geometry is mapped to distance~$1$ or codistance~$1$. 
%
%
\end{abstract}

\maketitle


\section*{Introduction}

Long root subgroup geometries of spherical buildings play a central role in the theory of group actions on buildings, following from the seminal work of Timmesfeld \cite{Tim:01}, and they also have strong connections with graded Lie algebras, as shown by Cohen \& Ivanyos \cite{Coh-Iva:06,Coh-Iva:07}. In the present paper we classify, in the simply laced case, automorphisms of such geometries that do not map any point to either a collinear one, or to a non-opposite point that is collinear to an opposite point (in other words, no point is mapped to either distance~$1$ or codistance~$1$). A primary motivation for this classification lies in the fact that this provides an elegant characterisation of so called \emph{uniclass automorphisms} of spherical buildings of simply laced type. We provide some more details below. 

Let $(\Delta,\delta)$ be a building with Coxeter system~$(W,S)$ and Coxeter graph $\Pi$. Let $\theta$ be an automorphism of $(\Delta,\delta)$, and let $\sigma\in\Aut(\Pi)$ be the \textit{companion automorphism}, defined by $\delta(C,D)=s$ if and only if $\delta(C^{\theta},D^{\theta})=s^{\sigma}$, for $s\in S$. The \textit{displacement spectra} of $\theta$ is
$$
\disp(\theta)=\{\delta(C,C^{\theta})\mid C\in\Delta\}.
$$
If $\Delta$ is thin then necessarily $\disp(\theta)$ is equal to a single $\sigma$-conjugacy class in~$W$. For a general building the displacement spectra $\disp(\theta)$ is contained in a union of $\sigma$-conjugacy classes in~$W$. We call $\theta$ \textit{uniclass} if $\disp(\theta)$ is contained in a single $\sigma$-conjugacy class. 

In \cite{NPV} we characterised the uniclass automorphisms of thick irreducible spherical buildings, showing that an automorphism of such a building is uniclass if and only if it is either \textit{anisotropic} (that is, maps all chambers to opposite chambers), or fixes a certain special subgeometry (called a \textit{Weyl substructure}). See Theorem~\ref{thm:basictheorem} for more details. 

The aim of the present paper is to provide an alternative characterisation of uniclass automorphisms, in the simply laced case, via a condition on the points of the associated long root subgroup geometries of the corresponding buildings. Since the diagrams are simply laced, all root subgroups are long root subgroups, and this ultimately makes such a characterisation possible. 

Recall that the long root subgroup geometries are point-line geometries which are non-strong parapolar spaces (for a precise definition, see Section~\ref{pre}) of diameter $3$. As such, the possibilities for the mutual position between two points are as follows:
\begin{compactenum}
\item[$\bullet$] the points are equal (called \textit{distance~$0$});
\item[$\bullet$] the points distinct and collinear (called \textit{distance $1$});
\item[$\bullet$] the points are not collinear, but are contained in a convex subspace isomorphic to a polar space of rank at least 2 (called \textit{distance~$2$}); 
\item[$\bullet$] the points $p,q$ are not collinear, but there exists a unique point collinear to both of them (called \textit{distance $2'$}, or \textit{codistance~$1$});
\item[$\bullet$] the points have mutual distance $3$ in the collinearity graph.
\end{compactenum}

Let $D\subseteq\{0,1,2,2',3\}$. An automorphism $\theta$ of the long root subgroup geometry of an irreducible thick spherical building with simply laced Coxeter diagram is called a \textit{$D$-kangaroo} if, for all points $x$, the mutual position between $x$ and its image $x^\theta$ is not contained in~$D$ (less formally, the mutual positions $d\in D$ are ``skipped'' in the displacement spectrum).

In this paper the $\{1,2'\}$-kangaroos play a special role -- these are automorphisms where both distance $1$ and codistance~$1$ are skipped in the displacement spectrum. Our main result is as follows. 

\begin{theo}\label{thm:A} An automorphism $\theta$ of the long root subgroup geometry of an irreducible thick spherical building with simply laced Coxeter diagram is uniclass if, and only if, it is a $\{1,2'\}$-kangaroo.
\end{theo}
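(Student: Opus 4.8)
The plan is to prove both implications by reducing to the structural dichotomy of \Cref{thm:basictheorem}, namely that a uniclass automorphism of a thick irreducible spherical building is either anisotropic or fixes a Weyl substructure. The bridge between the two formulations is a dictionary translating the five geometric mutual positions $0,1,2,2',3$ of the long root subgroup geometry into Weyl-group data: if the points of the geometry are the simplices of some fixed type $J\subseteq S$, then the mutual position of a point $x$ and its image $x^\theta$ is recorded by the double coset $W_J\,\delta(C,C^\theta)\,W_J$ for any chamber $C$ containing $x$. I would first make this correspondence explicit and, for each simply laced type, identify precisely which double cosets encode collinearity (distance~$1$) and codistance~$1$ (distance~$2'$); I expect these to be the two ``minimal non-fixing'' double cosets, represented by a single fundamental reflection and by a short product closely related to it.

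For the forward implication, suppose $\theta$ is uniclass. If $\theta$ is anisotropic then every chamber is opposite its image, hence every point of the geometry is opposite its image, i.e.\ at distance~$3$; in particular distances~$1$ and~$2'$ never occur and $\theta$ is a $\{1,2'\}$-kangaroo. If instead $\theta$ fixes a Weyl substructure, then its displacement spectrum $\disp(\theta)$ is confined to the single $\sigma$-conjugacy class $[w]$ attached to that substructure, and it suffices to verify that no element of $[w]$ lies in the two distinguished double cosets identified above. This is a finite check governed by the combinatorics of the substructure, and I would carry it out uniformly by noting that the distance-$1$ and distance-$2'$ double cosets are incompatible with the length and fixed-point constraints imposed by a Weyl substructure.

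The substantial direction is the converse, and here the plan is a local-to-global propagation argument along galleries. For $s$-adjacent chambers $C,C'$, the displacements $\delta(C,C^\theta)$ and $\delta(C',C'^\theta)$ differ by left and right multiplication by the generators $s$ and $s^\sigma$, so the $\sigma$-conjugacy class of the displacement can change only across walls of a controlled type. I would show that each such class-changing transition forces the type-$J$ simplex determined by the crossed panel to be sent to distance~$1$ or to codistance~$1$. Granting this, the kangaroo hypothesis forbids every class-changing transition, so the assignment $C\mapsto[\delta(C,C^\theta)]_\sigma$ is constant on the (connected) chamber set, and $\theta$ is uniclass.

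The main obstacle is precisely the step asserting that a class-changing gallery step produces a distance-$1$ or codistance-$1$ point. This is where the simply laced hypothesis is essential: because all root groups are long, every fundamental reflection sits symmetrically in the geometry, and the transition element can be analysed through the rank-$2$ residue of the crossed panel together with its interaction with the node of type~$J$ and the ambient symplecta. I expect the argument to proceed by a case analysis over the simply laced diagrams $\mathsf{A}_n,\mathsf{D}_n,\mathsf{E}_6,\mathsf{E}_7,\mathsf{E}_8$ and the possible companion automorphisms $\sigma$, verifying in each that the two distinguished double cosets are exactly the obstructions to the displacement class being locally constant. Replacing this diagram-by-diagram verification with a single uniform criterion on $W_J$-double cosets — one that simultaneously characterises distance~$1$, codistance~$1$, and the class-changing property — is the part I would expect to require the most care.
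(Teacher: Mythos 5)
Your forward direction is sound in outline: it reduces, exactly as the paper does, to \cref{thm:basictheorem}, and the verification that an anisotropic automorphism, or one fixing a Weyl substructure, skips distances $1$ and $2'$ is a legitimate finite check (the paper performs it geometrically, case by case, rather than via double cosets). Two small corrections to your dictionary: points of type $J$ correspond to cosets of $W_{S\setminus J}$, not $W_J$, and the codistance-$1$ position is \emph{not} a ``short product closely related to'' a reflection --- it is a long double coset adjacent to that of $w_0$.

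The converse, however, contains a fatal gap: your pivotal local claim --- that a class-changing panel crossing forces the type-$J$ simplex in the crossed panel to be sent to distance $1$ or codistance $1$ --- is false. Note first that $\ell(v^{-1}wv^{\sigma})\equiv\ell(w)\pmod 2$, so length parity is a $\sigma$-conjugacy invariant and every crossing at which the displacement changes length by $1$ is class-changing. Now take a sufficiently generic collineation $\theta$ of $\PG(3,\K)$ (type $\mathsf{A}_3$, polar type $J=\{1,3\}$, $\sigma=\id$) having a fixed point $q$. Choose a line $L'$ through $q$, a generic point $p\in L'$ and a generic plane $H\supset L'$, so that $p\notin H^{\theta}$ and $p^{\theta}\notin H$; since $L'$ meets $L'^{\theta}$ in $q$, the chamber $C'=(p,L',H)$ has $\delta(C',C'^{\theta})=s_2w_0$ (length $5$), because the only non-opposite element in the double coset $W_{\{2\}}w_0W_{\{2\}}=\{w_0,s_2w_0\}$ is $s_2w_0$. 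In the pencil of lines through $p$ inside $H$, the lines meeting their images form the fixed locus of a projectivity of the pencil, so generically there is a line $L$ with $L\cap L^{\theta}=\varnothing$, and $C=(p,L,H)$ has $\delta(C,C^{\theta})=w_0$ (length $6$). Thus $C,C'$ are $s_2$-adjacent with displacements in different conjugacy classes, yet the point of the long root geometry determined by the crossed panel $C\cap C'$, namely the flag $(p,H)$, is \emph{opposite} its image, not at distance $1$ or $2'$. (Of course this $\theta$ is not a kangaroo --- flags through $q$ are mapped to collinear flags --- but those bad points lie nowhere near the panel $C\cap C'$, which is exactly the problem: the implication cannot be localised at the crossed panel.) The only true version of your claim is ``a class change \emph{somewhere} forces a distance-$1$ or codistance-$1$ point \emph{somewhere}'', which is precisely the hard direction of the theorem rather than a lemma toward it. The paper's proof reflects this: the converse is obtained not by Weyl-distance combinatorics but by showing that $\{1,2'\}$-kangaroos are anisotropic or domestic (propagating ``fixed'' and ``opposite'' across adjacent symps, as in \cref{prop:id} and \cref{opp2'}), then invoking the classification of domestic automorphisms in types $\mathsf{E}_6$, $\mathsf{E}_7$, $\mathsf{E}_8$ and checking the resulting six classes, together with separate geometric arguments in types $\mathsf{A}$ and $\mathsf{D}$ (including the elimination of $\mathsf{D}_4$ trialities) --- machinery that any repair of your local argument would have to replace.
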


The proof of Theorem~\ref{thm:A} requires some case-by-case analysis of types~$\mathsf{A}$, $\mathsf{D}$, and $\mathsf{E}$. In each instance we will work with concrete geometric models for the particular type of building, as described in Section~\ref{pre}. In Section~\ref{sec:counter} we provide examples in non-simply laced buildings illustrating that Theorem~\ref{thm:A} does not hold in this broader context.

As a byproduct of our methods we also obtain a classification of $\{2,2'\}$-kangaroos in long root subgroup geometries for simply laced types (see Section~\ref{sec:4}). Moreover we classify so-called linewise $\{2,2'\}$-kangaroos (see \cref{linewise} for the definition) for all polar spaces (buildings of Coxeter type $\mathsf{B}_n$ or $\mathsf{D}_n$; see Proposition~\ref{prop:22'polar}) and the $\{2,2'\}$-kangaroos in metasymplectic spaces (buildings of Coxeter type $\mathsf{F_4}$; see \cref{metasymp}).

\textbf{Structure of the paper---}In \cref{pre} we provide background on long root geometries (projective spaces, polar spaces and parapolar spaces), and we outline the theory of opposition and fix diagrams for automorphisms that will be required in our proofs. In \cref{sec:proofs} we prove \cref{thm:A} for the classical buildings (types $\mathsf{A}$ and $\mathsf{D}$). We also improve on an alternative characterisation of $\{1,2'\}$-kangaroos in hyperbolic polar spaces (type $\mathsf{D}$ buildings) given in \cite{NPVV}.
In \cref{sec:exc}, we prove \cref{thm:A} for the exceptional buildings (type $\mathsf{E}_n$, $n=6,7,8$). The basic strategy here is to first show that $\{1,2'\}$-kangaroos in type $\mathsf{E}_6$ are domestic (in the case of dualities), and in type $\mathsf{E}_7$ and $\mathsf{E}_8$ they are domestic and do not fix any chamber (see Theorem~\ref{thm:En}). This observation allows us to make use of the detailed classification of domestic automorphisms given in \cite{NPVV,PVMexc4,Mal:12}. Finally, in \cref{sec:4} we give a classification of $\{2,2'\}$-kangaroos in the simply laced cases and metasymplectic spaces, and also for \textit{linewise $\{2,2'\}$-kangaroos} for all polar spaces (see \cref{sec:4} for definitions).

\section{Preliminaries}\label{pre}

\subsection{Geometries from buildings} Let $\Delta$ be a thick building of spherical type~$\mathsf{X}_n$. We shall adopt Bourbaki labelling \cite{Bou:00} for Dynkin diagrams. Associated to $\Delta$ there are various point-line geometries (called \textit{Lie incidence geometries}). Let $J$ be a subset of the type set, and take the set of flags of type $J$ as the point set of a point-line geometry, with the line set determined by the panels of cotype $i$ with $i\in J$. Such a geometry is said to be a \textit{Lie incidence geometry of type $\mathsf{X}_{n,J}$} (if $J=\{j\}$ we write $\mathsf{X}_{n,J}=\mathsf{X}_{n,j}$). When the diagram of $\mathsf{X}_n$ is simply laced, the building $\Delta$ is uniquely determined by the diagram and a (possibly skew) field $\K$, in which case we denote the Lie incidence geometry of type $\mathsf{X}_{n,J}$ by $\mathsf{X}_{n,J}(\K)$. Each vertex of $\Delta$ has an interpretation in the Lie incidence geometry, usually as a singular subspace, or a symplecton, or another convex subspace. We introduce these notions below (they are based on the fact that Lie incidence geometries we consider are either projective spaces, polar spaces or parapolar spaces). We provide a brief introduction, but refer the reader to the literature for more background (e.g.~\cite{Shult}). 

All point-line geometries that we will encounter are \emph{partial linear spaces}, that is, two distinct points are contained in at most one common line---and points that are contained in a common line are called \emph{collinear}; a point on a line is sometimes also called \emph{incident with that line}. If two points $p,q$ are collinear (denoted as $p\perp q$), then we write $pq$ for the unique line containing them. We will also always assume that each line has at least three points. 

In a general point-line geometry $\Gamma=(X,\cL)$, where $X$ is the point set, and $\cL$ is the set of lines (which we consider here as a subset of the power set of $X$), one defines a \emph{subspace} as a set of points with the property that it contains all points of each line having at least two points with it in common. A subspace is called (a) \emph{singular} if each pair of points of it is collinear, (b) a \emph{hyperplane} if every line intersects it in at least one point (and then the line is either contained in it, or intersects it in exactly one point), and (c) \emph{convex} if all points of every shortest path between two members of the subspace are contained in the subspace. In (c), the shortest paths are taken in the \emph{collinearity graph}  (the graph with vertices the points, with vertices adjacent if the corresponding points are collinear and distinct).

\subsection{Projective spaces (buildings of type $\mathsf{A}$)} For a skew field $\K$, the \emph{projective space $\mathsf{A}_{n,1}(\K)$} is the point-line geometry with point set the $1$-spaces of an $(n+1)$-dimensional vector space over $\K$ (the \emph{underlying vector space}), and a typical line is the set of $1$-spaces contained in a $2$-space.  The family of singular subspaces is in one-to-one correspondence with the vertices of the building $\mathsf{A}_n(\K)$. An automorphism of a building of type $\mathsf{A}_n$ is either a \emph{collineation} of the corresponding projective space, that is, a permutation of the point set preserving the line set, or a \emph{duality}, that is, a bijection from the point set to the set of hyperplanes such that three collinear points are mapped onto three hyperplanes with pairwise the same intersection. Collineations and dualities induce a permutation of all subspaces. A duality acting on the set of subspaces as an order $2$ permutation is called a \emph{polarity}. 

\subsection{Polar spaces (buildings of type $\mathsf{B}$ or $\mathsf{D}$)} A polar space, for our purposes, is just a Lie incidence geometry of type $\mathsf{B}_{n,1}$ or $\mathsf{D}_{n,1}$. There is an axiomatic approach in which the main axiom is the so-called \emph{one-or-all axiom} due to Buekenhout \& Shult \cite{Bue-Shu:74}: 
\begin{itemize}
\item[(BS)] For every point $p$ and every line $L$, either each point on $L$ or exactly one point on $L$ is collinear to $p$. 
\end{itemize}
We also require that no point is collinear to all other points, and, to ensure finite rank, that each nested sequence of singular subspaces is finite. Then there exists a natural number $r$ such that each maximal singular subspace is a projective space of dimension $r-1$. Singular subspaces of dimension $r-2$ are called \emph{submaximal}.  We call $r$ the \emph{rank} of $\Gamma$. We allow rank 1, in which case we just have a geometry without lines (and we assume at least three points).

We will only work with some special type of polar spaces: a \emph{hyperbolic} polar space, which is related to a  building of type $\mathsf{D}_n$, $n\geq 4$, and arises from a nondegenerate quadric  of maximal Witt index in odd dimensional projective space.  It has the characterising property that every submaximal singular subspace is contained in precisely two maximal singular subspaces. 

\subsection{Parapolar spaces} A \emph{parapolar space} is a point-line geometry with connected incidence graph such that \begin{compactenum}
\item[(1)] each pair of noncollinear points are either (a) collinear with no common point, (b) collinear with exactly one common point, or (c) are contained in a convex subspace isomorphic to a polar space (called a \emph{symplecton}), and 
\item[(2)] each line is contained in a symplecton.\end{compactenum} 
We also require that there are at least two symplecta, and hence, the geometry is not a polar space. A pair of noncollinear points collinear to a unique common point is called a \emph{special} pair; a pair of noncollinear points contained in a common symplecton is called a \emph{symplectic} pair. Almost all Lie incidence geometries which are not projective or polar spaces turn out to be parapolar spaces. 

A special type of parapolar space occurs when we take the vertices of so-called \emph{polar type} of an irreducible spherical building as points. The polar type is the set of simple roots not perpendicular to the highest root (the polar type is a singleton set if the Dynkin diagram is not of type $\mathsf{A}_n$). Such a Lie incidence geometry is often called a \emph{long root subgroup geometry}. We will only need those of type $\mathsf{A}_{n,\{1,n\}}$, $n\geq 2$, $\mathsf{D}_{n,2}$, $n\geq 4$, $\mathsf{E_{6,2}}$, $\mathsf{E_{7,1}}$ and $\mathsf{E_{8,8}}$. In such parapolar spaces, point pairs are either identical, collinear, symplectic, special or opposite (the latter in the building theoretic sense -- such points have distance $3$ in the collinearity graph of the point-line geometry).  In such geometries, we have the notion of an \emph{equator} of two opposite points $p,q$, which is the set of points symplectic to both $p$ and~$q$. This set is turned into a geometry by letting the lines be defined by the symplecta through $p$ containing a given maximal singular subspace (maximal in both the symplecta and the whole geometry), and it is called the \emph{equator geometry}, denoted by $E(p,q)$. The equator geometry is isomorphic to the disjoint union of the long root subgroup geometry of the direct factors of the residue of a point (in the building theoretic sense). It is also a \emph{fully embedded} subgeometry, that is, the point set forms a subspace.  It is also an  \emph{isometric} embedding, that is, each pair of points is collinear, symplectic and special in the embedded geometry if, and only if, it is collinear, symplectic and special, respectively, in the ambient geometry. The points $p$ and $q$ are called \emph{poles} of $E(p,q)$ (they are not unique). 

Let $\Gamma=(X,\cL)$ be a Lie incidence geometry and let $U$ be a non-maximal singular subspace.  Then $U$ corresponds to a certain flag of the corresponding spherical building and we have a building theoretic notion of \emph{residue at $U$}. This is usually a reducible building. However,  in the geometry $\Gamma$ we distinguish the components of that residue and we 
define the \emph{upper residue at $U$} 
as the point-line geometry with point set the set of singular subspaces of dimension $\dim U+1$ containing $U$, where a typical line is formed by those singular subspaces containing $U$ that are contained in a given singular subspace of dimension $\dim U+2$ containing $U$. It is again a Lie incidence geometry (possibly corresponding to a reducible spherical building). 


\subsection{Properties} We now list some well-known properties of long root subgroup geometries. These can be found in \cite{Coh-Iva:06} and \cite{Shult}.

\begin{lemma}\label{pointlinesymp}
If in a long root subgroup geometry a point $p$ is collinear to at least one point of a symp $\xi$, and $\xi$ contains a point special to $p$, then $p$ is collinear precisely to a line $L$ of $\xi$.  All points of $\xi$ collinear to a unique point of $L$ are special to $p$. 
\end{lemma}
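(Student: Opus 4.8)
The plan is to study the set $\Sigma=\{x\in\xi\mid x\perp p\}$ of points of $\xi$ collinear with $p$, show that it is exactly a line, and then read off the statement about special points from the one-or-all axiom (BS) inside the polar space $\xi$. I would begin by recording that $p\notin\xi$: since $\xi$ is a polar space of rank at least $2$, any two noncollinear points of $\xi$ share at least two common neighbours inside $\xi$, so no point of $\xi$ can be special to $p$ if $p\in\xi$, contradicting the hypothesis that $\xi$ contains a point $s$ special to $p$.

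Next I would prove that $\Sigma$ is a singular subspace. Singularity is immediate from convexity: if $x,y\in\Sigma$ were noncollinear then, as points of the polar space $\xi$, they would lie at distance $2$ in the collinearity graph, so $x\perp p\perp y$ would be a geodesic between two points of the convex set $\xi$, forcing $p\in\xi$. For closure under lines, suppose $x,y\in\Sigma$ are collinear and let $z$ be a further point of the line $xy\subseteq\xi$; if $p\not\perp z$ then $x$ and $y$ are two distinct common neighbours of the noncollinear pair $p,z$, so $p,z$ is a symplectic pair (a noncollinear pair with more than one common neighbour can be neither special nor opposite) and lies in a symp $\zeta$. As $\zeta$ is a subspace containing the collinear points $x,y$, it contains the line $xy$ and hence $z$, and now (BS) applied in $\zeta$ to $p$ and the line $xy$ forces $p$ to be collinear with all of $xy$, contradicting $p\not\perp z$. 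Thus $xy\subseteq\Sigma$, and $\Sigma$ is a singular subspace.

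It remains to pin down $\dim\Sigma=1$. For the upper bound I would exploit the special point $s$: the common neighbours of $p$ and $s$ lying in $\xi$ are exactly the points of $\Sigma\cap s^{\perp}$, and since $s\notin\Sigma$ the set $s^{\perp}\cap\xi$ is a proper geometric hyperplane of $\xi$, whence $\Sigma\cap s^{\perp}$ is either all of $\Sigma$ or a hyperplane of the singular subspace $\Sigma$. If $\dim\Sigma\geq 2$ then in either case $\Sigma\cap s^{\perp}$ contains a line, giving $p$ and $s$ at least two common neighbours and contradicting speciality; hence $\dim\Sigma\leq 1$. The reverse inequality---excluding that $\Sigma$ is a single point---is the step I expect to be the main obstacle. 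Here I would try to manufacture a second point of $\Sigma$ from the given collinear point $c$ and special point $s$, for instance by taking a common neighbour $m\in\xi$ of $c$ and $s$ and analysing the position of $m$ relative to $p$. Making this work, and likewise guaranteeing in the final assertion that the common neighbour witnessing speciality is genuinely unique, requires ruling out common neighbours of $p$ with points of $\xi$ that lie \emph{outside} $\xi$; this is exactly the point at which the fine structure of long root subgroup geometries---the controlled behaviour of common neighbours of special and symplectic pairs, equivalently the classification of point--symp relations---must be invoked, rather than the generic parapolar axioms alone.

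Finally, once $\Sigma=L$ is a line, the last assertion is clean. By (BS), every point $q\in\xi\setminus L$ is collinear with a unique point of $L$ or with all of $L$. If $q$ is collinear with all of $L$, then the whole line $L$ consists of common neighbours of $p$ and $q$, so $p,q$ is symplectic. If instead $q$ is collinear with a unique point $\ell\in L$, then $\ell$ is a common neighbour of $p$ and $q$, and any common neighbour inside $\xi$ lies in $\Sigma\cap q^{\perp}=L\cap q^{\perp}=\{\ell\}$; so, modulo the same exclusion of external common neighbours flagged above, $\ell$ is the unique common neighbour and $p,q$ is special, as required.
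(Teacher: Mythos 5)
The parts you actually prove are correct: $p\notin\xi$; the set $\Sigma=p^\perp\cap\xi$ is a singular subspace of $\xi$ (both the convexity argument and the (BS) argument inside an auxiliary symp are sound); $\dim\Sigma\le 1$ via the hyperplane $s^\perp\cap\xi$; and the symplectic half of the final dichotomy. But the two steps you defer are not loose ends --- they are the substance of the lemma, and your proposal leaves both unproven. Concretely: (1) you never exclude that $\Sigma$ is a single point, which is exactly what separates the conclusion ``collinear precisely to a line'' from the hypotheses; and (2) in the last paragraph, a point $q\in\xi$ meeting $L$ only in $\ell$ could a priori be \emph{symplectic} to $p$ via a second common neighbour lying outside $\xi$, and you give no argument excluding this. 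Your own diagnosis is accurate: the generic parapolar axioms recorded in the paper (convexity of symps, (BS), the trichotomy for noncollinear pairs) are what your argument runs on, and they visibly run out at these two points; what is needed is the structure specific to long root subgroup geometries --- the root filtration space axioms of Cohen--Ivanyos (for instance that the set of points equal, collinear or symplectic to a given point is a subspace, together with the filtration conditions governing how the relations collinear/symplectic/special/opposite change along lines), or equivalently the known classification of point--symp relations in these geometries. Announcing that ``the fine structure must be invoked'' without invoking it is a genuine gap, located at precisely the two places where the lemma has content.

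For comparison: the paper does not prove this lemma at all; it is listed among ``well-known properties of long root subgroup geometries'' with a citation to \cite{Coh-Iva:06} and \cite{Shult}, where it is established from the root filtration space axioms. So your attempt correctly reconstructs the elementary reduction (singularity of $\Sigma$, the upper bound $\dim\Sigma\le 1$, and the easy half of the last claim), but the mathematical core --- ruling out the single-point intersection and ruling out external common neighbours --- is exactly what you leave open, and as it stands the proposal is not a proof.
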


\begin{lemma}\label{specialspecial}
Let $p,u,w,q$ be four points of a long root subgroup geometry such that $p\perp u\perp w\perp q$. Then $p$ is opposite $q$ if, and only if, both pairs $\{p,w\}$ and $\{q,u\}$ are special. 
\end{lemma}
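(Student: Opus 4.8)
The plan is to transfer the statement to the collinearity graph and argue by distances, using two structural tools from the preliminaries: the convexity of symplecta---so that every common neighbour of a symplectic pair lies in the symplecton containing that pair---and \cref{pointlinesymp}, which controls how an external point collinear to a symplecton meets it. Since $p\perp u\perp w\perp q$ is a path of length $3$, the distance from $p$ to $q$ in the collinearity graph is at most $3$, and ``$p$ opposite $q$'' means exactly that this distance equals $3$. The first, purely bookkeeping, step is: if $p$ is opposite $q$, then $w\perp q$ forces $\dist(p,w)\geq 2$ while $p\perp u\perp w$ forces $\dist(p,w)\leq 2$, so $\{p,w\}$ is at distance $2$ (symplectic or special), and symmetrically so is $\{q,u\}$. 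Thus the forward implication reduces to excluding the symplectic alternative for these pairs, and the converse reduces to excluding distances $0,1,2$ between $p$ and $q$.

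For the converse, assume both $\{p,w\}$ and $\{q,u\}$ are special; as $u$ is a common neighbour of $p,w$ and $w$ is a common neighbour of $u,q$, uniqueness makes $u$ the unique common neighbour of $\{p,w\}$ and $w$ that of $\{q,u\}$. If $p=q$ then $w\perp p$, contradicting specialness of $\{p,w\}$; if $p\perp q$ then $q$ is a second common neighbour of $\{p,w\}$ (or $q=u$ collapses the pair $\{q,u\}$), again a contradiction. This removes distances $0$ and $1$. If $\dist(p,q)=2$, a common neighbour of $p,q$ (or, in the symplectic case, a common neighbour inside the symplecton) yields an induced pentagon $p\perp u\perp w\perp q\perp t\perp p$, whose non-adjacencies follow from the uniqueness statements above. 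When one of the ``diagonal'' pairs is symplectic, I would feed the relevant special point---supplied by one of the given special pairs---into \cref{pointlinesymp}, forcing its perp to be a line and then tracing the unique collinear point around the pentagon to produce a forbidden collinearity such as $w\perp p$.

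For the forward implication, suppose $p$ is opposite $q$ and, say, $\{p,w\}$ is symplectic in a symplecton $\xi$; by convexity the common neighbour $u$ lies in $\xi$, and $u\not\perp q$ lest $p\perp u\perp q$ give $\dist(p,q)\leq 2$. If $\{q,u\}$ is special, then $u\in\xi$ is a point of $\xi$ special to $q$, so \cref{pointlinesymp} applies to $q$ and $\xi$ and makes $q^\perp\cap\xi$ a line $L$; as $p\in\xi$ is collinear to at least one point of $L$ by the one-or-all axiom, either a point of $L$ is a common neighbour of $p$ and $q$ or $p$ is special to $q$, each contradicting oppositeness. The symmetric argument (interchanging $(p,w)$ and $(q,u)$) handles the case where exactly one pair is symplectic, so the only case that survives is that \emph{both} pairs are symplectic.

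The main obstacle is precisely this last case (and its mirror image in the converse, the ``all special'' pentagon): in it no distance-$2$ pair in sight is of the opposite type, so there is no special point to feed into \cref{pointlinesymp}. A short computation with convexity shows that in the doubly symplectic case $q$ is collinear to a unique point of $\xi$ (and $p$ to a unique point of the symplecton of $\{q,u\}$), so the whole lemma comes down to showing that an external point collinear to a symplecton of a long root subgroup geometry is never collinear to exactly one of its points. This rigidity is not formal and is where the specific structure of long root subgroup geometries (constant symplectic rank, the controlled point--residue interaction) must be invoked; the remaining combinatorics is routine.
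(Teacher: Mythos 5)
The paper never proves this lemma: it appears in Section~1.5 among ``well-known properties'' of long root subgroup geometries, cited to Cohen--Ivanyos and Shult, so your attempt must be judged on its own merits. Judged that way, it has a genuine hole, and in fact two, which you acknowledge but misdiagnose. The first gap (the forward direction when both $\{p,w\}$ and $\{q,u\}$ are symplectic) is unnecessary: \cref{pointsymp}, which you never invoke, closes it at once, and indeed disposes of the entire forward implication with no case analysis. If $\{p,w\}$ were symplectic in a symp $\xi$, then $\xi$ contains the point $p$ opposite $q$, so by \cref{pointsymp} every point of $\xi$ is symplectic, special or opposite to $q$; in particular no point of $\xi$ is collinear to $q$, contradicting $w\in\xi$ and $w\perp q$. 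So there is no need for your unproven ``rigidity'' claim here at all.

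The second gap is the serious one, and your proposed reduction does not repair it. In the converse, after excluding distances $0$, $1$ and the symplectic case (your Case~B argument, and the pentagon-with-a-symplectic-diagonal argument, do work as sketched), there remains the configuration where $\{p,q\}$ is special with common neighbour $t$ and \emph{all five} diagonals $\{p,w\},\{u,q\},\{p,q\},\{t,u\},\{t,w\}$ are special. You claim the whole lemma ``comes down to'' the statement that an external point collinear to a symp is never collinear to exactly one of its points; but in the all-special pentagon that statement is vacuously satisfied and yields nothing: taking a symp $\xi$ on the line $uw$, \cref{pointlinesymp} makes $p^\perp\cap\xi$ and $q^\perp\cap\xi$ lines, and the uniqueness-of-common-neighbour argument you use elsewhere shows $t^\perp\cap\xi=\varnothing$ (if $t^\perp\cap\xi$ were a line $N$, the one-or-all axiom and specialness of $\{t,u\}$ would force the unique common neighbour $p$ of $t$ and $u$ to lie in $\xi$, which is impossible since $\{p,w\}$ is special). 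So no point in sight is collinear to exactly one point of a symp, and no contradiction arises. Excluding this pentagon is not achievable from \cref{pointlinesymp}, convexity and the polar-space axioms alone; it is a genuinely deeper fact about long root geometries (in an apartment it fails to exist only because the Gram matrix of five long roots with the corresponding angles has a negative eigenvalue, a metric rather than incidence-theoretic obstruction, and in the thick case it is proved from the filtration axioms of Cohen--Ivanyos root filtration spaces, under which the whole lemma is a short formal computation). This is exactly the point where you write ``the remaining combinatorics is routine,'' and it is not.
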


\begin{lemma}\label{pointsymp}
If, in a long root subgroup geometry, a symp $\xi$ contains a point $q$ opposite a given point $p$, then $\xi$ contains a unique point $x$ symplectic to $p$. All points of $\xi$ collinear to but distinct from $x$ are special to $p$, and all points of $\xi$ not collinear to $x$ are opposite $p$.  
\end{lemma}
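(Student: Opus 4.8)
The plan is to read off the relation of every point of $\xi$ to $p$ from the polar-space structure of $\xi$ together with \cref{specialspecial}, keeping in mind the basic dichotomy that two non-collinear points lying in a common symp form a \emph{symplectic} pair and never a special one. The first step is to show that no point of $\xi$ is collinear to $p$, i.e.\ $p^\perp\cap\xi=\emptyset$. Suppose $y\in\xi$ with $p\perp y$. Then $y\neq q$, and $y$ cannot be collinear to $q$ inside $\xi$, for otherwise $p\perp y\perp q$ would give $\dist(p,q)\le 2$, contradicting that $q$ is opposite $p$. Hence $y$ and $q$ are non-collinear points of the polar space $\xi$, so they admit a common neighbour $z\in\xi$, yielding a path $p\perp y\perp z\perp q$. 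By \cref{specialspecial} the pair $\{q,y\}$ would then be special; but $y,q$ lie in the common symp $\xi$ and are non-collinear, so $\{q,y\}$ is symplectic, a contradiction. Thus every point of $\xi$ is symplectic, special, or opposite to $p$.

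Next I would pin down the symplectic points. If a symp $\zeta$ through $p$ meets $\xi$ in a point $z$, then inside $\zeta$ the points $p$ and $z$ are non-collinear by the previous step, so $\{p,z\}$ is symplectic; moreover $\zeta\cap\xi=\{z\}$ consists of a single point, since if it contained a line $\ell\subseteq\xi$ then the one-or-all axiom applied in $\zeta$ to $p$ and $\ell$ would force $p$ to be collinear to some point of $\ell\subseteq\xi$, contradicting $p^\perp\cap\xi=\emptyset$. Conversely, every $x\in\xi$ symplectic to $p$ arises in this way from the unique symp $\zeta(p,x)$, which therefore satisfies $\zeta(p,x)\cap\xi=\{x\}$. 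So the symplectic points of $\xi$ correspond bijectively to the symps through $p$ meeting $\xi$, each in exactly one point.

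Granting such a point $x$, the local structure follows from \cref{specialspecial}: choose a common neighbour $a$ of $p$ and $x$ inside $\zeta(p,x)$, so that $p\perp a\perp x$. For $z\in\xi$ with $z\perp x$ and $z\neq x$, the path $p\perp a\perp x\perp z$ together with \cref{specialspecial} shows that $p$ is opposite $z$ if and only if $\{p,x\}$ is special; as $\{p,x\}$ is symplectic, $z$ is not opposite $p$, hence (no point of $\xi$ is collinear to $p$) $z$ is symplectic or special to $p$. Existence of at least one symplectic point I would extract from a geodesic $p\perp u\perp w\perp q$: by \cref{specialspecial} the pair $\{p,w\}$ is special with unique common neighbour $u$, and analysing the symp on the line $wq$ (which contains the opposite point $q$) via the parapolar axioms produces a point of $\xi$ at distance $2$ from $p$ that is symplectic rather than special.

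The main obstacle is the twofold sharpening: uniqueness of $x$, and upgrading the trichotomy to the stated dichotomy, namely that the points of $\xi$ collinear to $x$ are exactly the special ones while all remaining points are opposite. The difficulty is that \cref{specialspecial} never \emph{certifies} a pair $\{p,\cdot\}$ as special — it only rules out oppositeness once $\{p,x\}$ is known to be symplectic — so on its own it cannot separate the special points from the opposite ones, nor exclude a second symplectic point. To close this I would exploit the finer geometry of long root subgroup geometries: distinct symps through $p$ each meet $\xi$ in a single point, $\xi$ is convex, and $x$ lies in the isometrically embedded equator geometry $E(p,q)$. Concretely, for $y\in\xi$ non-collinear to $x$ one picks a common neighbour $c\in\xi$ of $x$ and $y$; since $c\perp x$, $c$ is non-opposite to $p$, hence special once uniqueness excludes symplectic, and a second application of \cref{specialspecial} to $p\perp m\perp c\perp y$ reduces the claim ``$y$ opposite $p$'' to showing the auxiliary pair $\{y,m\}$ is special. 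Assembling these local constraints with the uniqueness of the symplectic point then yields the dichotomy; establishing uniqueness itself — equivalently, that only one symp through $p$ meets $\xi$ — is the crux, and is where the global diameter-$3$ structure, or, failing a uniform argument, the type-by-type models of \cref{pre}, must be brought to bear.
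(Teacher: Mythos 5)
A preliminary remark on the comparison itself: the paper does not prove \cref{pointsymp} at all --- it is listed among ``well-known properties of long root subgroup geometries'' and quoted from the literature (Cohen--Ivanyos and Shult). So there is no internal proof to measure your attempt against; your proposal has to stand on its own as a complete argument, and it does not. What you do establish is correct and useful: (i) no point of $\xi$ is collinear to $p$ (the path $p\perp y\perp z\perp q$ plus \cref{specialspecial}, using that non-collinear points in a common symp are symplectic and never special, is a sound argument); (ii) any symp through $p$ meets $\xi$ in at most one point, which is then symplectic to $p$, and every symplectic point of $\xi$ arises this way; (iii) a point of $\xi$ collinear to a symplectic point $x$ cannot be opposite $p$.

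However, these facts give only a weak trichotomy, and the actual content of the lemma --- \emph{existence} and \emph{uniqueness} of $x$, that the points of $\xi$ collinear to $x$ are special (not symplectic) to $p$, and that the points of $\xi$ not collinear to $x$ are opposite $p$ --- is precisely what you defer, as your final paragraph concedes. The mechanism you propose for closing it is circular: to show $y\in\xi$ is opposite $p$ you apply \cref{specialspecial} along $p\perp m\perp c\perp y$, which requires first \emph{certifying} that $\{y,m\}$ is special; that is a statement of exactly the kind you are trying to prove, and, as you yourself note, \cref{specialspecial} can only transfer specialness between pairs, never establish it. The existence argument is also not a proof: in the geodesic $p\perp u\perp w\perp q$ the point $w$ and the line $wq$ need not lie in $\xi$, so ``analysing the symp on the line $wq$'' does not manifestly produce any point of $\xi$ symplectic to $p$. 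Closing these gaps genuinely requires input beyond the two lemmas you quote --- for instance the filtration axioms of root filtration spaces in the sense of Cohen--Ivanyos (the relation to a fixed point changes by at most one step along a line, $E_{\leq 1}(p)$ is a subspace, etc.), or a type-by-type verification in the concrete models of \cref{pre}, where the statement is a short computation (e.g.\ in $\mathsf{A}_{n,\{1,n\}}(\K)$, with $p=(a,H)$ and $\xi$ the grid of flags $(d,N)$ with $d\in\ell\subseteq U\subseteq N$, the unique symplectic point is $(\ell\cap H,\,\langle U,a\rangle)$, and the special/opposite dichotomy reads off from whether $d=\ell\cap H$ or $N=\langle U,a\rangle$). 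Since neither is carried out, the proposal is an incomplete proof rather than an alternative route.
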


\subsection{Automorphisms, opposition, and domesticity} \label{e77}The longest element $w_0$ of a spherical Coxeter group $W$ induces an automorphism $\sigma_0$ of the Coxeter graph (or Dynkin diagram) $\Pi$ called the \textit{opposition relation} on the type set. If $W$ is of type $\mathsf{A}_n$ with $n\geq 2$, of $\mathsf{D}_n$ with $n\geq 3$ odd, or $\mathsf{E}_6$ then $\sigma_0$ is the unique order~$2$ automorphism of $\Pi$, and in all other irreducible cases $\sigma_0$ is trivial. Chambers $A,B$ of a spherical building $(\Delta,\delta)$ of type $W$ are \textit{opposite} if $\delta(A,B)=w_0$. Simplices $\alpha$ and $\beta$ in a spherical building are \textit{opposite} if they have ``opposite types'' (that is, $\mathrm{type}(\beta)=(\mathrm{type}(\alpha))^{\sigma_0}$) and there exists a chamber $A$ containing $\alpha$ and a chamber $B$ containing $\beta$ such that $A$ and $B$ are opposite. Two objects in a Lie incidence geometry are opposite if the corresponding simplices of the building are opposite. 

It is convenient to call an automorphism (of a building, or Lie incidence geometry) an \textit{oppomorphism} if it maps each object to an object of the opposite type (that is, the companion automorphism is the opposition relation on the type set). For example, oppomorphisms of a $\mathsf{D}_n$ building are type preserving if $n$ is even, and interchange types $n-1$ and $n$ if $n$ is odd (using standard Bourbaki labelling). 

An automorphism of a spherical building is called \textit{anisotropic} if it maps all chambers to opposite chambers, and is called \emph{domestic} if it does not map any chamber to an opposite chamber. There exists an almost complete classification of domestic automorphisms of spherical buildings (see \cite{Lam-Mal:23,NPVV,PVMexc,PVMexc2,PVMclass,PVMexc4}), and we will use parts of this classification in the proof of our main theorem (Theorem~\ref{thm:A}). 

If an automorphism of a spherical building does not map any simplex of type $J$ to an opposite (with $J$ stable under the opposition relation), then we say that the automorphism is \emph{$J$-domestic}. If $J$ is the type of points, lines or symps (of a polar space or a long root subgroup geometry), then we also sometimes call the automorphism \emph{point-domestic}, \emph{line-domestic}, or \emph{symp-domestic}, respectively. In the case of polar spaces, we call a collineation $\ell$-domestic if no subspace of projective dimension $\ell$ is mapped onto an opposite (so point-domestic is the same thing as $0$-domestic; note that there is a shift in indexing of types here, with $0$-domestic being the same as $\{1\}$-domestic).

An automorphism of a spherical building is called \emph{capped} if, whenever there exist simplices of types $J$ and $J'$ mapped to respective opposite simplices, then there exists a simplex of type $J\cup J'$ mapped onto an opposite. In case the rank of the spherical building is at least 3, automorphisms that are not capped only exist when the building has rank 2 Fano plane residues (the unique projective plane with 3 points per line), see \cite{PVM:19b}. We record the following special case of \cite[Theorem~1]{PVM:19b}.

\begin{prop}\label{uncapped}
Let $\theta$ be a domestic automorphism of a building of type $\mathsf{E_6,E_7}$ or $\mathsf{E_8}$ which is not capped.  Let $i$ be the type of the polar node of the corresponding Dynkin diagram.  Then there exists a simplex of cotype $\{i\}$ which is mapped onto an opposite. 
\end{prop}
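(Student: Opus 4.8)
The plan is to obtain the statement as a specialisation of the classification of uncapped automorphisms in \cite{PVM:19b} to the three exceptional simply laced types. First I would invoke \cite[Theorem~1]{PVM:19b}: since a building of type $\mathsf{E}_6$, $\mathsf{E}_7$ or $\mathsf{E}_8$ has rank at least~$3$, an uncapped automorphism $\theta$ can exist only if the building has rank~$2$ residues isomorphic to the Fano plane, that is, only over $\mathbb{F}_2$, and in that situation the failure of capping is localised: there is a residue of type $\mathsf{A}_2$ over $\mathbb{F}_2$ on which $\theta$ induces an exceptional domestic correlation, mapping some point to an opposite line and some line to an opposite point while sending no incident point-line flag to an opposite flag. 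This is the mechanism that produces simplices of small cotype mapped onto opposites without violating domesticity, and \cite[Theorem~1]{PVM:19b} moreover determines the full opposition diagram (the set of types $J$ admitting a simplex mapped onto an opposite) in each admissible case.

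Next I would reduce the proposition to a purely combinatorial check on type sets. In Bourbaki labelling the polar node is $i=2$ for $\mathsf{E}_6$, $i=1$ for $\mathsf{E}_7$ and $i=8$ for $\mathsf{E}_8$; in each case $i$ is the unique simple root not perpendicular to the highest root, and $i$ is fixed by the opposition relation $\sigma_0$ (which is nontrivial only for $\mathsf{E}_6$, where it interchanges $1\leftrightarrow 6$ and $3\leftrightarrow 5$). Hence the cotype $\{i\}$, i.e.\ the type $S\setminus\{i\}$, is stable under $\sigma_0$, so it is meaningful to ask whether a simplex of cotype $\{i\}$ is mapped onto an opposite, and the claim becomes the assertion that $S\setminus\{i\}$ belongs to the opposition diagram furnished by \cite[Theorem~1]{PVM:19b}.

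The substantive step, and where I expect the only real work to lie, is to confirm this membership in each of the three cases by comparing with the explicit diagrams of \cite[Theorem~1]{PVM:19b}. The heart of the matter is that the exceptional (non-capped) behaviour is concentrated at the residue through which it arises, and for the exceptional types this residue necessarily involves the polar node---precisely the node governing the long root subgroup geometry---so that a simplex of cotype equal to the polar node is forced to be mapped onto an opposite. I anticipate that the main obstacle is not geometric but bookkeeping: carefully matching the labelling conventions and the presentation of the exceptional opposition diagrams in \cite{PVM:19b} with ours, and checking that across all uncapped domestic automorphisms of $\mathsf{E}_6(\mathbb{F}_2)$, $\mathsf{E}_7(\mathbb{F}_2)$ and $\mathsf{E}_8(\mathbb{F}_2)$ the type $S\setminus\{i\}$ is always present.
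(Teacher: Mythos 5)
You take essentially the same route as the paper: the paper gives no independent proof of this proposition, recording it verbatim as a special case of \cite[Theorem~1]{PVM:19b}, which is exactly your plan of invoking that theorem and specialising to the polar node (your identification of the polar nodes as $2$, $1$, $8$ and their invariance under $\sigma_0$ is correct).

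Two cautions, one of which matters. The opposition diagram, as defined in this paper, records only the types of \emph{minimal} simplices sent to opposites (the encircled orbits); for an uncapped automorphism this data alone can never yield the conclusion, since uncappedness means precisely that the union of the encircled orbits is \emph{not} realised by any simplex, and the diagram says nothing about which intermediate types are realised. So your reduction to ``$S\setminus\{i\}$ belongs to the opposition diagram'' is valid only under the finer reading you insert parenthetically, namely that \cite[Theorem~1]{PVM:19b} determines the full set of types $J$ admitting a simplex mapped onto an opposite---equivalently, that it guarantees a panel of cotype $\{i\}$ is mapped onto an opposite. That finer statement is the actual crux of the citation, not labelling bookkeeping, and a careful write-up should make explicit that this is what is being taken from \cite{PVM:19b}. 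Secondly, the ``localisation'' picture you describe---a Fano plane residue on which $\theta$ induces an exceptional domestic correlation---is your own gloss and not the content of \cite[Theorem~1]{PVM:19b}; an uncapped automorphism need not stabilise any rank~$2$ residue, and the role of the Fano condition in that theorem is only to restrict which buildings can carry uncapped automorphisms and what their diagrams can be. Since nothing later in your argument uses this gloss, it does not invalidate the plan, but it should not be attributed to the cited theorem.
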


Recall that an automorphism $\theta$ of a building is \textit{uniclass} if its displacement spectrum is contained in a single $\sigma$-conjugacy class in the Coxeter group (where $\sigma$ is the companion automorphism of $\theta$). By \cite[Theorem~3.4]{NPV} every uniclass automorphism of a thick irreducible spherical building is capped. 

We now recall opposition and fix diagrams of automorphisms, from \cite{PVM:19a}.  Let $\Pi$ be the Coxeter diagram of an irreducible thick spherical building $\Delta$ and let $\sigma_0$ be the opposition relation on types. Let $\theta$ be an automorphism of $\Delta$, let $\sigma$ be the automorphism induced on $\Pi$ by $\theta$, let $\varphi$ be another automorphism of $\Pi$ and let $J$ be a set of \emph{distinguished} orbits of vertices of $\Pi$ under the action of $\<\varphi\>$. Then we say that $(\Pi,J,\varphi)$ is
\begin{compactenum}
\item the \emph{fix diagram} for $\theta$ if $\sigma=\varphi$ and $J$ consists precisely of the types of the minimal simplices of $\Delta$ fixed by $\theta$;
\item the \emph{opposition diagram} for $\theta$ if $\sigma=\varphi\sigma_0$ and $J$ consists precisely of the types of the minimal simplices of $\Delta$ sent to opposites b $\theta$.
\end{compactenum} 

We visualise fix and opposition diagrams by encircling the members of $J$ on the diagram~$\Pi$, making the action of $\varphi$ apparent by bending edges originating from a common vertex and belonging to the same orbit of $\varphi$.  This visualisation is strongly inspired by the \emph{indices} introduced by Jacques Tits in \cite{Tits:66} and has since become standard. 

In \cite{PVMexc,PVMclass} we introduced symbols to denote opposition diagrams (inspired by the indices from~\cite{Tits:66}), and we use the same symbols for fix diagrams. We list the symbols that are relevant for the present paper in~\cref{FixDiagrams}. For example, if $\theta$ is an automorphism of an $\mathsf{A}_7$ building with fix diagram $\mathsf{A}_{7;3}^2$ then $\theta$ is type preserving, and fixes vertices of types $2,4,6$ (and only these types), while if $\theta$ has opposition diagram $\mathsf{A}_{7;3}^2$ then $\theta$ induces the order $2$ automorphism on the type set, and maps vertices of types $2,4,6$ (and only these types) to opposites.

\renewcommand{\arraystretch}{1.6}
\begin{table}[t]
\begin{center}
\noindent\begin{tabular}{|c|c|c|c|}
\hline
Type& Symbol&Diagram&\\
\hline\hline
\multirow{2}{*}{$\mathsf{A}_{2n+1}$}
& $\mathsf{^2A}_{2n+1;n+1}^1$
&
\begin{tikzpicture}[scale=0.5,baseline=-0.5ex]
\node at (0,0.8) {};
\node at (0,-0.8) {};
\node [inner sep=0.8pt,outer sep=0.8pt] at (1,0.5) (1a) {$\bullet$};
\node [inner sep=0.8pt,outer sep=0.8pt] at (1,-0.5) (1b) {$\bullet$};
\node [inner sep=0.8pt,outer sep=0.8pt] at (2,0.5) (2a) {$\bullet$};
\node [inner sep=0.8pt,outer sep=0.8pt] at (2,-0.5) (2b) {$\bullet$};
\node [inner sep=0.8pt,outer sep=0.8pt]  at (3,0.5) (3a) {$\bullet$};
\node [inner sep=0.8pt,outer sep=0.8pt] at (3,-0.5) (3b) {$\bullet$};
\node [inner sep=0.8pt,outer sep=0.8pt] at (5,0.5) (4a) {$\bullet$};
\node [inner sep=0.8pt,outer sep=0.8pt] at (5,-0.5) (4b) {$\bullet$};
\node [inner sep=0.8pt,outer sep=0.8pt] at (0,0) (5) {$\bullet$};
\draw (0,0) to [bend left=45] (1,0.5);
\draw (0,0) to [bend right=45] (1,-0.5);
\draw (1,0.5)--(3,0.5);
\draw (1,-0.5)--(3,-0.5);
\draw [dashed] (3,0.5)--(5,0.5);
\draw [dashed] (3,-0.5)--(5,-0.5);
\draw [line width=0.5pt,line cap=round,rounded corners] (1a.north west)  rectangle (1b.south east);
\draw [line width=0.5pt,line cap=round,rounded corners] (2a.north west)  rectangle (2b.south east);
\draw [line width=0.5pt,line cap=round,rounded corners] (3a.north west)  rectangle (3b.south east);
\draw [line width=0.5pt,line cap=round,rounded corners] (4a.north west)  rectangle (4b.south east);
\draw [line width=0.5pt,line cap=round,rounded corners] (5.north west)  rectangle (5.south east);
\end{tikzpicture}& $n\geq 1$\\
& $\mathsf{A}_{2n+1;n}^2$
&
\begin{tikzpicture}[scale=0.5,baseline=-0.5ex]
\node at (0,0.3) {};
\node at (0,-0.3) {};
\node [inner sep=0.8pt,outer sep=0.8pt] at (-5,0) (-5) {$\bullet$};
\node [inner sep=0.8pt,outer sep=0.8pt] at (-4,0) (-4) {$\bullet$};
\node [inner sep=0.8pt,outer sep=0.8pt] at (-3,0) (-3) {$\bullet$};
\node [inner sep=0.8pt,outer sep=0.8pt] at (-2,0) (-2) {$\bullet$};
\node [inner sep=0.8pt,outer sep=0.8pt] at (-1,0) (-1) {$\bullet$};
\node [inner sep=0.8pt,outer sep=0.8pt] at (1,0) (1) {$\bullet$};
\node [inner sep=0.8pt,outer sep=0.8pt] at (2,0) (2) {$\bullet$};
\node [inner sep=0.8pt,outer sep=0.8pt] at (3,0) (3) {$\bullet$};
\node [inner sep=0.8pt,outer sep=0.8pt] at (4,0) (4) {$\bullet$};
\node [inner sep=0.8pt,outer sep=0.8pt] at (5,0) (5) {$\bullet$};
\draw (-5,0)--(-1,0);
\draw (1,0)--(5,0);
\draw [style=dashed] (-1,0)--(1,0);
\draw [line width=0.5pt,line cap=round,rounded corners] (-4.north west)  rectangle (-4.south east);
\draw [line width=0.5pt,line cap=round,rounded corners] (-2.north west)  rectangle (-2.south east);
\draw [line width=0.5pt,line cap=round,rounded corners] (2.north west)  rectangle (2.south east);
\draw [line width=0.5pt,line cap=round,rounded corners] (4.north west)  rectangle (4.south east);
\phantom{\draw [line width=0.5pt,line cap=round,rounded corners] (-5.north west)  rectangle (-5.south east);}
\end{tikzpicture} & $n\geq 1$
\\ \hline
& $\sD_{n;n-2i}^1$
& \begin{tikzpicture}[scale=0.5,baseline=-0.5ex]
\node at (0,0.8) {};
\node [inner sep=0.8pt,outer sep=0.8pt] at (-5,0) (-5) {$\bullet$};
\node [inner sep=0.8pt,outer sep=0.8pt] at (-4,0) (-4) {$\bullet$};
\node [inner sep=0.8pt,outer sep=0.8pt] at (-2,0) (-2) {$\bullet$};
\node [inner sep=0.8pt,outer sep=0.8pt] at (-1,0) (-1) {$\bullet$};
\node [inner sep=0.8pt,outer sep=0.8pt] at (1,0) (1) {$\bullet$};
\node [inner sep=0.8pt,outer sep=0.8pt] at (2,0) (2) {$\bullet$};
\node [inner sep=0.8pt,outer sep=0.8pt] at (3,0.5) (5a) {$\bullet$};
\node [inner sep=0.8pt,outer sep=0.8pt] at (3,-0.5) (5b) {$\bullet$};
\draw (-5,0)--(-4,0);
\draw (-2,0)--(-1,0);
\draw (1,0)--(2,0);
\draw (2,0) to (3,0.5);
\draw (2,0) to   (3,-0.5);
\draw [style=dashed] (-4,0)--(-2,0);
\draw [style=dashed] (-1,0)--(1,0);
\draw [line width=0.5pt,line cap=round,rounded corners] (-5.north west)  rectangle (-5.south east);
\draw [line width=0.5pt,line cap=round,rounded corners] (-4.north west)  rectangle (-4.south east);
\draw [line width=0.5pt,line cap=round,rounded corners] (-2.north west)  rectangle (-2.south east);
\node [below] at (-2,-0.25) {\footnotesize $n\!\!-\!\!2i$};
\end{tikzpicture} & $n\geq 3$, $2\leq 2i\leq n-1$\\
$\mathsf{D}_n$ 
& $\sD_{n;n-2i+1}^1$
& \begin{tikzpicture}[scale=0.5,baseline=-0.5ex]
\node at (0,0.8) {};
\node [inner sep=0.8pt,outer sep=0.8pt] at (-5,0) (-5) {$\bullet$};
\node [inner sep=0.8pt,outer sep=0.8pt] at (-4,0) (-4) {$\bullet$};
\node [inner sep=0.8pt,outer sep=0.8pt] at (-2,0) (-2) {$\bullet$};
\node [inner sep=0.8pt,outer sep=0.8pt] at (-1,0) (-1) {$\bullet$};
\node [inner sep=0.8pt,outer sep=0.8pt] at (1,0) (1) {$\bullet$};
\node [inner sep=0.8pt,outer sep=0.8pt] at (2,0) (2) {$\bullet$};
\node [inner sep=0.8pt,outer sep=0.8pt] at (3,0.5) (5a) {$\bullet$};
\node [inner sep=0.8pt,outer sep=0.8pt] at (3,-0.5) (5b) {$\bullet$};
\draw (-5,0)--(-4,0);
\draw (-2,0)--(-1,0);
\draw (2,0) to [bend left] (3,0.5);
\draw (1,0)--(2,0);
\draw (2,0) to [bend right=45]  (3,-0.5);
\draw [style=dashed] (-4,0)--(-2,0);
\draw [style=dashed] (-1,0)--(1,0);
\draw [line width=0.5pt,line cap=round,rounded corners] (-5.north west)  rectangle (-5.south east);
\draw [line width=0.5pt,line cap=round,rounded corners] (-4.north west)  rectangle (-4.south east);
\draw [line width=0.5pt,line cap=round,rounded corners] (-2.north west)  rectangle (-2.south east);
\node [below] at (-2,-0.25) {\footnotesize $n\!\!-\!\!2i\!\!+\!\!1$};
\end{tikzpicture}& $n\geq 4$, $4\leq 2i\leq n$
\\
& $\sD_{n;n-1}^1$
& \begin{tikzpicture}[scale=0.5,baseline=-0.5ex]
\node at (0,0.8) {};
\node at (0,-0.8) {};
\node [inner sep=0.8pt,outer sep=0.8pt] at (-4,0) (-4) {$\bullet$};
\node [inner sep=0.8pt,outer sep=0.8pt] at (-3,0) (-3) {$\bullet$};
\node [inner sep=0.8pt,outer sep=0.8pt] at (-2,0) (-2) {$\bullet$};
\node [inner sep=0.8pt,outer sep=0.8pt] at (-1,0) (-1) {$\bullet$};
\node [inner sep=0.8pt,outer sep=0.8pt] at (1,0) (1) {$\bullet$};
\node [inner sep=0.8pt,outer sep=0.8pt] at (2,0) (2) {$\bullet$};
\node [inner sep=0.8pt,outer sep=0.8pt] at (3,0) (3) {$\bullet$};
\node [inner sep=0.8pt,outer sep=0.8pt] at (4,0.5) (5a) {$\bullet$};
\node [inner sep=0.8pt,outer sep=0.8pt] at (4,-0.5) (5b) {$\bullet$};
\draw (-4,0)--(-1,0);
\draw (1,0)--(3,0);
\draw (3,0) to [bend left] (4,0.5);
\draw (3,0) to [bend right=45] (4,-0.5);
\draw [style=dashed] (-1,0)--(1,0);
\draw [line width=0.5pt,line cap=round,rounded corners] (-4.north west)  rectangle (-4.south east);
\draw [line width=0.5pt,line cap=round,rounded corners] (-2.north west)  rectangle (-2.south east);
\draw [line width=0.5pt,line cap=round,rounded corners] (2.north west)  rectangle (2.south east);
\draw [line width=0.5pt,line cap=round,rounded corners] (-3.north west)  rectangle (-3.south east);
\draw [line width=0.5pt,line cap=round,rounded corners] (-1.north west)  rectangle (-1.south east);
\draw [line width=0.5pt,line cap=round,rounded corners] (1.north west)  rectangle (1.south east);
\draw [line width=0.5pt,line cap=round,rounded corners] (3.north west)  rectangle (3.south east);
\draw [line width=0.5pt,line cap=round,rounded corners] (5a.north west)  rectangle (5b.south east);
\end{tikzpicture} & $n\geq 3$
\\ 
$\sD_{2n}$
& $\sD_{2n;n}^2$
& \begin{tikzpicture}[scale=0.5,baseline=-0.5ex]
\node at (0,-0.8) {};
\node [inner sep=0.8pt,outer sep=0.8pt] at (-5,0) (-5) {$\bullet$};
\node [inner sep=0.8pt,outer sep=0.8pt] at (-4,0) (-4) {$\bullet$};
\node [inner sep=0.8pt,outer sep=0.8pt] at (-3,0) (-3) {$\bullet$};
\node [inner sep=0.8pt,outer sep=0.8pt] at (-2,0) (-2) {$\bullet$};
\node [inner sep=0.8pt,outer sep=0.8pt] at (-1,0) (-1) {$\bullet$};
\node [inner sep=0.8pt,outer sep=0.8pt] at (1,0) (1) {$\bullet$};
\node [inner sep=0.8pt,outer sep=0.8pt] at (2,0) (2) {$\bullet$};
\node [inner sep=0.8pt,outer sep=0.8pt] at (3,0) (3) {$\bullet$};
\node [inner sep=0.8pt,outer sep=0.8pt] at (4,0) (4) {$\bullet$};
\node [inner sep=0.8pt,outer sep=0.8pt] at (5,0.5) (5a) {$\bullet$};
\node [inner sep=0.8pt,outer sep=0.8pt] at (5,-0.5) (5b) {$\bullet$};
\draw (-5,0)--(-1,0);
\draw (1,0)--(4,0);
\draw (4,0) to   (5,0.5);
\draw (4,0) to   (5,-0.5);
\draw [style=dashed] (-1,0)--(1,0);
\draw [line width=0.5pt,line cap=round,rounded corners] (-4.north west)  rectangle (-4.south east);
\draw [line width=0.5pt,line cap=round,rounded corners] (-2.north west)  rectangle (-2.south east);
\draw [line width=0.5pt,line cap=round,rounded corners] (2.north west)  rectangle (2.south east);
\draw [line width=0.5pt,line cap=round,rounded corners] (4.north west)  rectangle (4.south east);
\draw [line width=0.5pt,line cap=round,rounded corners] (5b.north west)  rectangle (5b.south east);
\phantom{\draw [line width=0.5pt,line cap=round,rounded corners] (-5.north west)  rectangle (-5.south east);}
\end{tikzpicture} & $n\geq 2$
\\ \hline\hline
\multirow{2}{*}{$\mathsf{E_6}$} & $\mathsf{^2E_{6;4}}$ &
\begin{tikzpicture}[scale=0.5,baseline=-0.5ex]
\node at (0,0.8) {};
\node [inner sep=0.8pt,outer sep=0.8pt] at (-2,0) (2) {$\bullet$};
\node [inner sep=0.8pt,outer sep=0.8pt] at (-1,0) (4) {$\bullet$};
\node [inner sep=0.8pt,outer sep=0.8pt] at (0,-0.5) (5) {$\bullet$};
\node [inner sep=0.8pt,outer sep=0.8pt] at (0,0.5) (3) {$\bullet$};
\node [inner sep=0.8pt,outer sep=0.8pt] at (1,-0.5) (6) {$\bullet$};
\node [inner sep=0.8pt,outer sep=0.8pt] at (1,0.5) (1) {$\bullet$};
\draw (-2,0)--(-1,0);
\draw (-1,0) to [bend left=45] (0,0.5);
\draw (-1,0) to [bend right=45] (0,-0.5);
\draw (0,0.5)--(1,0.5);
\draw (0,-0.5)--(1,-0.5);
\draw [line width=0.5pt,line cap=round,rounded corners] (2.north west)  rectangle (2.south east);
\draw [line width=0.5pt,line cap=round,rounded corners] (4.north west)  rectangle (4.south east);
\draw [line width=0.5pt,line cap=round,rounded corners] (3.north west)  rectangle (5.south east);
\draw [line width=0.5pt,line cap=round,rounded corners] (1.north west)  rectangle (6.south east);
\end{tikzpicture} &
\\
& $\mathsf{E_{6;2}}$ &
\begin{tikzpicture}[scale=0.5]
\node at (0,0.3) {};
\node [inner sep=0.8pt,outer sep=0.8pt] at (-2,0) (1) {$\bullet$};
\node [inner sep=0.8pt,outer sep=0.8pt] at (-1,0) (3) {$\bullet$};
\node [inner sep=0.8pt,outer sep=0.8pt] at (0,0) (4) {$\bullet$};
\node [inner sep=0.8pt,outer sep=0.8pt] at (1,0) (5) {$\bullet$};
\node [inner sep=0.8pt,outer sep=0.8pt] at (2,0) (6) {$\bullet$};
\node [inner sep=0.8pt,outer sep=0.8pt] at (0,1) (2) {$\bullet$};
\draw (-2,0)--(2,0);
\draw (0,0)--(0,1);
\draw [line width=0.5pt,line cap=round,rounded corners] (1.north west)  rectangle (1.south east);
\draw [line width=0.5pt,line cap=round,rounded corners] (6.north west)  rectangle (6.south east);
\end{tikzpicture}&\\ \hline
\multirow{2}{*}{$\mathsf{E_7}$}
& $\mathsf{E_{7;3}}$
&\begin{tikzpicture}[scale=0.5]
\node at (0,0.3) {};
\node [inner sep=0.8pt,outer sep=0.8pt] at (-2,0) (1) {$\bullet$};
\node [inner sep=0.8pt,outer sep=0.8pt] at (-1,0) (3) {$\bullet$};
\node [inner sep=0.8pt,outer sep=0.8pt] at (0,0) (4) {$\bullet$};
\node [inner sep=0.8pt,outer sep=0.8pt] at (1,0) (5) {$\bullet$};
\node [inner sep=0.8pt,outer sep=0.8pt] at (2,0) (6) {$\bullet$};
\node [inner sep=0.8pt,outer sep=0.8pt] at (3,0) (7) {$\bullet$};
\node [inner sep=0.8pt,outer sep=0.8pt] at (0,1) (2) {$\bullet$};
\draw (-2,0)--(3,0);
\draw (0,0)--(0,1);
\draw [line width=0.5pt,line cap=round,rounded corners] (1.north west)  rectangle (1.south east);
\draw [line width=0.5pt,line cap=round,rounded corners] (6.north west)  rectangle (6.south east);
\draw [line width=0.5pt,line cap=round,rounded corners] (7.north west)  rectangle (7.south east);
\end{tikzpicture} &
\\ 
& $\mathsf{E_{7;4}}$ &
\begin{tikzpicture}[scale=0.5]
\node [inner sep=0.8pt,outer sep=0.8pt] at (-2,0) (1) {$\bullet$};
\node [inner sep=0.8pt,outer sep=0.8pt] at (-1,0) (3) {$\bullet$};
\node [inner sep=0.8pt,outer sep=0.8pt] at (0,0) (4) {$\bullet$};
\node [inner sep=0.8pt,outer sep=0.8pt] at (1,0) (5) {$\bullet$};
\node [inner sep=0.8pt,outer sep=0.8pt] at (2,0) (6) {$\bullet$};
\node [inner sep=0.8pt,outer sep=0.8pt] at (3,0) (7) {$\bullet$};
\node [inner sep=0.8pt,outer sep=0.8pt] at (0,1) (2) {$\bullet$};
\draw (-2,0)--(3,0);
\draw (0,0)--(0,1);
\draw [line width=0.5pt,line cap=round,rounded corners] (1.north west)  rectangle (1.south east);
\draw [line width=0.5pt,line cap=round,rounded corners] (3.north west)  rectangle (3.south east);
\draw [line width=0.5pt,line cap=round,rounded corners] (4.north west)  rectangle (4.south east);
\draw [line width=0.5pt,line cap=round,rounded corners] (6.north west)  rectangle (6.south east);
\end{tikzpicture}&\\
\hline
$\mathsf{E_8}$
&$\mathsf{E_{8;4}}$
&
\begin{tikzpicture}[scale=0.5]
\node at (0,0.3) {};
\node [inner sep=0.8pt,outer sep=0.8pt] at (-2,0) (1) {$\bullet$};
\node [inner sep=0.8pt,outer sep=0.8pt] at (-1,0) (3) {$\bullet$};
\node [inner sep=0.8pt,outer sep=0.8pt] at (0,0) (4) {$\bullet$};
\node [inner sep=0.8pt,outer sep=0.8pt] at (1,0) (5) {$\bullet$};
\node [inner sep=0.8pt,outer sep=0.8pt] at (2,0) (6) {$\bullet$};
\node [inner sep=0.8pt,outer sep=0.8pt] at (3,0) (7) {$\bullet$};
\node [inner sep=0.8pt,outer sep=0.8pt] at (4,0) (8) {$\bullet$};
\node [inner sep=0.8pt,outer sep=0.8pt] at (0,1) (2) {$\bullet$};
\draw (-2,0)--(4,0);
\draw (0,0)--(0,1);
\draw [line width=0.5pt,line cap=round,rounded corners] (1.north west)  rectangle (1.south east);
\draw [line width=0.5pt,line cap=round,rounded corners] (6.north west)  rectangle (6.south east);
\draw [line width=0.5pt,line cap=round,rounded corners] (7.north west)  rectangle (7.south east);
\draw [line width=0.5pt,line cap=round,rounded corners] (8.north west)  rectangle (8.south east);
\end{tikzpicture}&\\
\hline\end{tabular}
\caption{Some fix and opposition diagrams and their symbols}\label{FixDiagrams}
\end{center}
\end{table}

It is shown in \cite[\S1.2]{NPV} that there is a duality between the opposition and fix diagrams of uniclass automorphisms in the sense that any two uniclass automorphisms with the same fixed (respectively opposition) diagram~$\mathsf{X}$ will necessarily have the same opposition (respectively fix) diagram~$\mathsf{Y}$, and this correspondence is involutory. We list this correspondence in \cref{Duality} for the diagrams that are relevant for this paper. 

\begin{table}
\begin{center}
\begin{tabular}{|rcl|rcl|}\hline
 $\mathsf{^2A}_{2n+1;n+1}^1$& $\leftrightarrow$ & $\mathsf{A}_{2n+1;n}^2$ & $\mathsf{D}^1_{n;i}$&$\leftrightarrow$ & $\mathsf{D}^1_{n;n-i}$ \\ \hline $\mathsf{D}_{2n;n}^2$ & $\leftrightarrow$ & $\mathsf{D}^2_{2n;n}$ & $\mathsf{^2E_{6;4}}$ & $\leftrightarrow$ & $\mathsf{E_{6;2}}$\\ \hline$\mathsf{E_{7;3}}$ & $\leftrightarrow$ & $\mathsf{E_{7;4}}$ & $\mathsf{E_{8;4}}$ & $\leftrightarrow$ & $\mathsf{E_{8;4}}$\\ \hline\end{tabular}
 \caption{Fix and opposition diagram duality for uniclass automorphisms}\label{Duality}
\end{center}
\end{table}

Recall the definition of a $D$-kangaroo of a long root subgroup geometry from the introduction (where $D\subseteq \{0,1,2,2',3\}$). If $D=\{d\}$ is a singleton we write $d$-kangaroo in place of $\{d\}$-kangaroo. 

On occasion it will be more convenient to argue in a Lie incidence geometry other than the long root subgroup geometry. In such a case, an automorphism of the long root subgroup geometry with spectral gaps (for example, a $\{1,2'\}$-kangaroo) may not have a spectral gap property when viewed as an automorphism of another Lie incidence geometry. It is convenient in such an instance to call a $D$-kangaroo (with $D\subseteq\{0,1,2,2',3\}$) of the long root geometry, when viewed as an automorphism of another Lie incidence geometry associated with the same building, a \textit{polar $D$-kangaroo} (the word ``polar'' is used here to indicate the the kangaroo property is only with respect to the polar node geometry -- that is, the long root subgroup geometry). 
%
%

An example of the phenomenon noted above is that in type $\mathsf{E}_7$, where the long root geometry is $\Gamma=\mathsf{E}_{7,1}(\K)$, it is sometimes more convenient to work in the geometry $\Gamma'=\mathsf{E}_{7,7}(\K)$. The points of $\Gamma$ are then the symps of $\Gamma'$. Collinear points of $\Gamma$ correspond to symps of $\Gamma'$ intersecting in a maximal singular subspace (and, in general, we will call such symps \emph{adjacent}); a special pair of  points corresponds to a pair of disjoint symps for which there exists a unique symp intersecting both in respective maximal singular subspaces.

\subsection{Characterisation of uniclass automorphisms from~\cite{NPV}}

 In \cite{NPV} we characterised uniclass automorphisms of thick irreducible spherical buildings in terms of their fixed structure. The following theorem is a consequence of this characterisation for the simply laced case.

\begin{thm}[\cite{NPV}]\label{thm:basictheorem}
Let $\theta$ be a non-trivial automorphism of a thick irreducible spherical building $\Delta$ of rank at least~$2$ and with simply laced Coxeter diagram. Then $\theta$ is uniclass if and only if $\theta$ is either anisotropic, or:
\begin{compactenum}[$(1)$]
\item $\Delta$ has type $\sA_{2n+1}$ $(n\geq 1)$ and in the associated projective space
\begin{compactenum}[$-$]
\item $\theta$ is a fix point free collineation fixing a line spread elementwise, or
\item $\theta$ is a symplectic polarity (a polarity fixing a symplectic polar space of rank~$n$). 
\end{compactenum}
\item $\Delta$ has type $\sD_n$ ($n\geq 4$) and in the associated polar space
\begin{compactenum}[$-$]
\item $\theta$ is a collineation whose fixed points form an ideal subspace, or
\item $\theta$ is a fix point free collineation fixing a line spread elementwise.
\end{compactenum}
\item $\Delta=\sE_6(\K)$ (with $\K$ a field) and 
\begin{compactenum}[$-$]
\item $\theta$ is a symplectic polarity (a polarity fixing a standard split metasymplectic space), or
\item $\theta$ is a collineation fixing an ideal Veronesian pointwise in $\sE_{6,1}(\KK)$. 
\end{compactenum}
\item $\Delta=\sE_7(\K)$ (with $\K$ a field) and
\begin{compactenum}[$-$]
\item the fixed point structure of $\theta$ in $\sE_{7,1}(\K)$ is a fully embedded metasymplectic space $\sF_4(\K,\LL)$ with $\LL$ a quadratic extension of $\K$, isometrically embedded as a long root subgroup geometry, or
\item the fixed point structure of $\theta$ in $\sE_{7,7}(\K)$ is an ideal dual polar Veronesian.
\end{compactenum}
\item $\Delta=\sE_8(\K)$ (with $\K$ a field) and the fixed point structure of $\theta$ in $\sE_{8,8}(\K)$ is a fully (and automatically isometrically) embedded metasymplectic space $\sF_4(\K,\mathbb{H})$ with $\mathbb{H}$ either a quaternion algebra over $\K$ or an inseparable quadratic field extension of degree~$4$ in characteristic~$2$.
\end{compactenum}
\end{thm}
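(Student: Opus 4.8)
The plan is to deduce \cref{thm:basictheorem} from the general ``Basic Theorem'' of \cite{NPV}, which asserts that a non-trivial automorphism of a thick irreducible spherical building is uniclass if and only if it is anisotropic or it fixes a Weyl substructure. Granting this dichotomy, the remaining content of \cref{thm:basictheorem} is purely the \emph{explicit geometric identification}, in each simply laced type, of the Weyl substructures as the concrete fixed configurations listed in cases $(1)$--$(5)$. Accordingly I would split along the two branches: the anisotropic branch needs no further description and is recorded verbatim, while the ``fixes a Weyl substructure'' branch is analysed type by type. The two implications then read: any structure in $(1)$--$(5)$ is either anisotropic or a Weyl substructure and hence uniclass, and conversely a non-anisotropic uniclass $\theta$ fixes a Weyl substructure, which must be one of those listed.

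The key organising principle is that, for a uniclass automorphism, the Weyl substructure is recovered from the \emph{fix diagram}, and conversely the fix diagram (together with the companion automorphism) is determined by the opposition diagram through the involutory duality recorded in \cref{Duality}. So the first step is to restrict the admissible fix diagrams to the simply laced types, which are exactly those appearing in \cref{FixDiagrams}. For each such diagram I would read off the types of the minimal fixed simplices (the encircled nodes) and whether the companion $\sigma$ is trivial or the order~$2$ graph automorphism (straight versus bent edges), and then reconstruct the fixed subgeometry inside the concrete geometric model of \cref{pre}.

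Carrying this out type by type: in type $\sA_{2n+1}$ the diagrams $\sA_{2n+1;n}^2$ and $\mathsf{^2A}_{2n+1;n+1}^1$ are the fix diagrams of, respectively, a fix-point-free collineation stabilising a line spread elementwise (type preserving, straight diagram) and a symplectic null polarity (companion equal to the opposition relation, bent diagram); here $\K$ may be a skew field because $\sA_n(\K)$ is. Type $\sD_n$ is analogous, with $\sD^1_{n;i}$ and $\sD^2_{2n;n}$ giving a collineation whose fixed points form an ideal subspace and a fix-point-free line-spread collineation (the latter forcing even rank, consistent with the $\sD^2_{2n;n}$ family). In the exceptional types the building exists only over a commutative field, which forces the hypothesis ``$\K$ a field''; there the diagrams $\mathsf{^2E_{6;4}},\mathsf{E_{6;2}},\mathsf{E_{7;3}},\mathsf{E_{7;4}}$ and $\mathsf{E_{8;4}}$ must be matched with the split metasymplectic space fixed by a symplectic polarity, an ideal Veronesian, a fully and isometrically embedded metasymplectic space $\sF_4(\K,\LL)$, an ideal dual polar Veronesian, and the embedded $\sF_4(\K,\HH)$, respectively. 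The defining algebra of each embedded metasymplectic space is then pinned down by the form or field extension fixed by $\theta$.

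The main obstacle is the exceptional branch, and specifically the recognition problem: proving that the fixed subgeometry attached to each exceptional fix diagram genuinely is a Veronesian or a metasymplectic space of the stated kind, with the correct embedding (fully and isometrically embedded) and the correct defining algebra ($\LL$ a quadratic extension for $\sE_7$; $\HH$ a quaternion algebra or an inseparable degree~$4$ extension in characteristic~$2$ for $\sE_8$). The $\sE_8$ case is the most delicate, since in characteristic~$2$ one must separate the quaternion and inseparable-degree-$4$ possibilities for $\HH$. A secondary point requiring care is \emph{exhaustiveness}: one must check that \cref{FixDiagrams} lists all fix diagrams of uniclass automorphisms in the simply laced types, so that no class is omitted from $(1)$--$(5)$. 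This follows from the classification underlying the Basic Theorem, but the bookkeeping across the five types, including the parity constraints, is where errors are easiest to make.
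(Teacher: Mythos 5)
Your proposal takes essentially the same route as the paper: the paper offers no independent proof of \cref{thm:basictheorem}, importing it from \cite{NPV} as a specialisation of the characterisation there (uniclass if and only if anisotropic or fixing a Weyl substructure), and your plan is precisely that specialisation, with the hard recognition work correctly deferred to the classification underlying \cite{NPV}. One concrete slip in your bookkeeping: you have the $\mathsf{E}_7$ fix diagrams swapped --- by Table~\ref{AllWS} and the Class~2/3 discussions, the embedded metasymplectic space $\sF_4(\K,\LL)$ in $\mathsf{E}_{7,1}(\K)$ has fix diagram $\mathsf{E_{7;4}}$ (opposition diagram $\mathsf{E_{7;3}}$), while the ideal dual polar Veronesian in $\mathsf{E}_{7,7}(\K)$ has fix diagram $\mathsf{E_{7;3}}$ (opposition diagram $\mathsf{E_{7;4}}$), so you have matched each structure to its opposition diagram rather than its fix diagram; this does not alter the final list $(1)$--$(5)$, but it would matter if the fix-diagram matching were carried through literally.
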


The definition of the various fixed structures in the theorem (line spreads, ideal subspace, standard split metasymplectic space, ideal Veronesian, and so on), as far as they are essential to understand our proofs, will be given in the relevant subsections of Section~\ref{sec:proofs}. We call these fixed structures \textit{Weyl substructures} in $\Delta$. These Weyl substructures are large and highly structured subsets of the (simplicial) building. Indeed, each Weyl substructure $\Delta'$ is itself a thick spherical building. 

Putting the above theorem in tabular form, we obtain the list of Weyl substructures for simply laced diagrams, along with their Coxeter type, given in Table~\ref{AllWS}. In the table, the \textit{absolute type} is the Coxeter type of the ambient building $\Delta$, and the \textit{relative type} is the Coxeter type of the Weyl substructure~$\Delta'$. Moreover, in each case we list the fix diagram of the associated automorphisms of $\Delta$ fixing $\Delta'$ (the opposition diagram can then be obtained using the involution in Table~\ref{Duality}).

\begin{table}[h!]
\renewcommand{\arraystretch}{1.5}
\begin{tabular}{c|c||c||c}
\mbox{Abs. type} & \mbox{Rel. type} & \mbox{Description of Weyl substructure} & \mbox{Fix diagram}\\ \hline\hline
 \multirow{2}{*}{$\mathsf{A}_{2n-1}$} & $\mathsf{B}_{n}$ & symplectic polar space of rank~$n$ & ${^2}\sA_{2n-1;n}^1$ \\
 &   $\mathsf{A}_{n-1}$  & composition line spread  & ${^1}\sA_{2n-1;n-1}^2$  \\ \hline 
 \multirow{2}{*}{$\mathsf{D}_{n}$} & $\mathsf{B}_{i}$ & ideal subspace of rank $i$ & $\sD_{n;i}^1$   \\
 &   $\mathsf{B}_{n/2}$  & composition line spread & $\sD_{n;n/2}^2$  \\ \hline 
  \multirow{2}{*}{$\mathsf{E}_{6}$} & $\mathsf{F}_{4}$& standard split metasymplectic space & ${^2}\sE_{6;4}$   \\ 
    & $\mathsf{A}_{2}$  & ideal Veronesian  & $\sE_{6;2}$  \\ \hline
     \multirow{2}{*}{$\mathsf{E}_{7}$} & $\mathsf{F}_{4}$& partial composition spread  & $\sE_{7;4}$  \\ 
    & $\mathsf{B}_{3}$  & ideal dual polar quaternion Veronesian  & $\sE_{7;3}$  \\ \hline
         \multirow{1}{*}{$\mathsf{E}_{8}$} & $\mathsf{F}_{4}$& quaternion metasymplectic space& $\sE_{8;4}$   \\ 
 \end{tabular}
\vspace{6pt}
\caption{Weyl substructures\label{AllWS}}
\end{table}

\section{Proof of the Main Result---Classical cases}\label{sec:proofs}

In this section we prove Theorem~\ref{thm:A} for classical (simply laced) types.

\subsection{Buildings of type $\sA_n$ (projective spaces)}
A \emph{symplectic polarity} 
of a projective space is a polarity such that every point 
is contained in its image. Symplectic polarities are always related to a 
nondegenerate alternating form in the underlying vector space, and hence only exist 
for projective spaces of odd rank over commutative fields (see \cite{TTVM3}). A 
\emph{line spread} of a projective space is a partition of the point set into lines.  A 
line spread is a \emph{composition spread} if it induces a line spread in every 
subspace spanned by members of the spread. Note that this is automatic if a line 
spread is elementwise fixed under a collineation.

The following theorem proves Theorem~\ref{thm:A} for projective spaces. 

\begin{thm}
An non-trivial automorphism of $\mathsf{A}_{n,1}(\K)=\PG(n,\K)$ is a polar $\{1,2'\}$-kangaroo if, and only if, either its is anisotropic, or it is a symplectic polarity (and then it is a polar $\{1,2',3\}$-kangaroo), or it elementwise fixes a line spread (and then it is a polar $\{0,1,2'\}$-kangaroo). Consequently, $\theta$ is a polar $\{1,2'\}$-kangaroo if, and only if, it is uniclass.
\end{thm}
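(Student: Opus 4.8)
The engine of the whole argument is an explicit dictionary for the five mutual positions in the long root subgroup geometry $\mathsf{A}_{n,\{1,n\}}(\K)$, whose points are the incident pairs $(p,H)$ with $p\in H$. The plan is to compute common neighbours: a flag $(q,K)$ is collinear to $(p,H)$ exactly when $q=p$ or $K=H$, so the only possible common neighbours of two distinct flags $(p_1,H_1)$ and $(p_2,H_2)$ are $(p_1,H_2)$ (present iff $p_1\in H_2$) and $(p_2,H_1)$ (present iff $p_2\in H_1$). Hence there are $0$, $1$ or $2$ common neighbours, and since the symps of this geometry are grids this distinguishes opposite, special and symplectic pairs respectively. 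This yields the dictionary: distinct flags are collinear iff $p_1=p_2$ or $H_1=H_2$; symplectic iff $p_1\ne p_2$, $H_1\ne H_2$ and both $p_1\in H_2$, $p_2\in H_1$; special (distance $2'$) iff exactly one of $p_1\in H_2$, $p_2\in H_1$ holds; opposite iff $p_1\notin H_2$ and $p_2\notin H_1$. Every assertion then reduces to incidence bookkeeping for $(p,H)$ and its image.

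For the easy direction I would treat the three families in turn. An anisotropic automorphism sends every flag to an opposite one, so only distance $3$ occurs. For a symplectic polarity $\theta$ (arising from a nondegenerate alternating form) every point satisfies $p\in p^\theta$ and dually every hyperplane satisfies $H^\theta\in H$; writing the image of $(p,H)$ as $(H^\theta,p^\theta)$ and noting that the equalities $p=H^\theta$ and $H=p^\theta$ coincide because $\theta^2=\id$, the dictionary gives distance $0$ when $p=H^\theta$ and distance $2$ otherwise, so $\theta$ is a $\{1,2',3\}$-kangaroo. For a fix point free collineation $\theta$ fixing a line spread $\mathcal S$ elementwise I would first observe that $\theta$ fixes no hyperplane: since $\mathcal S$ covers all points, some spread line $\ell\not\subseteq H$ would meet a fixed $H$ in a single, necessarily fixed, point, contradicting fix point freeness; thus distances $0,1$ are excluded. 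Writing $\ell(p)$ for the spread line through $p$, both $p^\theta\in H$ and $p\in H^\theta$ are equivalent to $\ell(p)\subseteq H$, so they hold simultaneously or not at all; this rules out distance $2'$, and $\theta$ is a $\{0,1,2'\}$-kangaroo.

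For the converse, let $\theta$ be a non-trivial $\{1,2'\}$-kangaroo. Suppose first it is a collineation induced by $g\in\mathrm{GL}(V)$. Skipping distance $1$ forces (a) every hyperplane through a fixed point to be fixed, and (b) every point of a fixed hyperplane to be fixed; chaining (a) and (b) shows a single fixed point or fixed hyperplane would make $\theta$ trivial, so $\theta$ is fix point free and fixes no hyperplane, whence $v,gv$ are independent for all $v\ne0$. Skipping distance $2'$ says that for every incident pair $p^\theta\in H\iff p\in H^\theta$; in coordinates, whenever $\phi(v)=0$ one has $\phi(gv)=0\iff\phi(g^{-1}v)=0$. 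If $v,gv,g^{-1}v$ were independent for some $v$, a functional $\phi$ vanishing on $v,gv$ but not on $g^{-1}v$ would produce a special pair; hence $g^{-1}v\in\langle v,gv\rangle$ for all $v$. Thus each $\langle v,gv\rangle$ is a $g$-invariant plane containing no eigenvector, two such planes meeting nontrivially coincide, and so they form a line spread fixed elementwise by $\theta$. (A linear collineation is never anisotropic for $n\ge2$, since a functional vanishing on $v$ and $gv$ always exists, which is why the collineation case yields precisely the line spread.)

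Finally suppose $\theta$ is a duality, given by a nondegenerate sesquilinear form $f$, so that absolute points are the isotropic vectors and the image of $(p,H)$ is $(H^\theta,p^\theta)$. Skipping distance $2'$ now reads: for every incident flag, $p$ is absolute iff $H$ is absolute. I expect this to be the main obstacle, as the clean module argument available for collineations is replaced by form-theoretic work: the plan is to use this condition together with the distance $1$ constraints to force $f$ to be reflexive (so $\theta$ is a polarity) and to force an all-or-nothing behaviour of isotropy, by manufacturing from any mismatch between absolute points and absolute hyperplanes an incident flag realising distance $2'$. Once $\theta$ is a polarity, the two surviving possibilities are clean: if no point is absolute then $f$ is anisotropic, $V_i\cap V_i^\theta=0$ for every subspace, and $\theta$ is anisotropic; if every point is absolute then a nondegenerate reflexive form with all vectors isotropic is alternating, so $\theta$ is a symplectic polarity (cf.\ \cite{TTVM3}). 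This establishes the main equivalence, and the concluding statement is then bookkeeping: by \cref{thm:basictheorem} the non-trivial uniclass automorphisms of a type $\mathsf{A}$ building are exactly the anisotropic ones together with the symplectic polarities and the fix point free line spread collineations, which is precisely the list produced above; hence $\theta$ is a polar $\{1,2'\}$-kangaroo if and only if it is uniclass.
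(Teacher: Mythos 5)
Your position dictionary, the three forward verifications, and the collineation half of the converse are correct, and for collineations you even streamline the paper's route: where the paper invokes \cite[Proposition~3.3(i)]{PVMclass} to get that the fixed lines form a spread, you build the spread directly from the $g$-invariant planes $\langle v,gv\rangle$. (Two minor remarks there: collineations of $\PG(n,\K)$ are induced by semilinear, not necessarily linear, maps, but your argument only uses that $g$ permutes subspaces, so this is harmless; and your parenthetical that a collineation is never anisotropic is indeed needed for the trichotomy to be exhaustive.)

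The duality case, however, contains a genuine gap: the entire content of that case --- that a non-anisotropic $\{1,2'\}$-kangaroo duality must be a symplectic polarity --- is left as a ``plan'' (\emph{``the plan is to \dots force $f$ to be reflexive \dots and to force an all-or-nothing behaviour of isotropy''}), and the crucial manufacturing step is never carried out. Moreover, the proposed route through reflexivity of the sesquilinear form is an unnecessary detour; your own translation of the hypothesis already suffices, purely synthetically. Skipping distances $1$ and $2'$ gives, for \emph{every} flag $(p,H)$, the equivalence ``$p\in p^\theta$ iff $H^\theta\in H$'': the image flag $(H^\theta,p^\theta)$ sits at position $0$, $2$ or $3$ from $(p,H)$, and in these positions both, both, or neither of the two incidences hold, respectively (position $1$, where exactly one incidence can hold because the flags share an element, is excluded by the $1$-kangaroo hypothesis --- this is where that hypothesis enters). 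Absoluteness then propagates: if $p$ is absolute and $q$ is any other point, choose a hyperplane $H$ containing the line $pq$; the flag $(p,H)$ makes $H$ absolute, and then the flag $(q,H)$ makes $q$ absolute. Hence either no point is absolute --- in which case $U\cap U^\theta=0$ for every subspace $U$, since any nonzero $u\in U\cap U^\theta$ satisfies $u\in U^\theta\subseteq\langle u\rangle^\theta$ and would be an absolute point, so $\theta$ is anisotropic --- or every point is absolute, and a duality all of whose points are absolute is a symplectic polarity (the classical fact you cite from \cite{TTVM3}). This is in substance what the paper does, phrased via first producing a fixed flag $(p_0,p_0^\theta)$ and then propagating incidence-with-image from it to all points and, dually, to all hyperplanes. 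Until this step is actually written out, your proposal proves the theorem only for collineations.
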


\begin{proof} Suppose that $\theta$ is a polar $\{1,2'\}$-kangaroo of $\mathsf{A}_{n,1}(\K)=\PG(n,\K)$, for some skew field $\K$. Suppose first that $\theta$ is type preserving and non-trivial. The points of the long root subgroup geometry are the incident point-hyperplane pairs of $\PG(n,\K)$, collinear if sharing either the point or the hyperplane. In order to avoid confusion with collinearity in the projective space, we call collinear points in the long root subgroup geometry \emph{adjacent}. It is easy to deduce the following remaining mutual positions:

\begin{compactenum}
\item[$\bullet$] The two point-hyperplane pairs $(p,H)$ and $(p',H')$ correspond to symplectic points (distance $2$) in the long root subgroup geometry if and only if $p\in H'$ and $p'\in H$.
\item[$\bullet$] The two point-hyperplane pairs $(p,H)$ and $(p',H')$ correspond to special points (distance $2'$) if and only if either $p\in H'$ and $p'\notin H$, or $p\notin H'$ and $p'\in H$. 
\item[$\bullet$] The two point-hyperplane pairs $(p,H)$ and $(p',H')$ correspond to opposite points (distance $3$) if and only if $p\notin H'$ and $p'\notin H$. 
\end{compactenum}

Observe that if $\theta$ fixes a hyperplane, then it fixes all points of that hyperplane (because $\theta$ is a (polar) $1$-kangaroo). It follows that if $\theta$ fixes a hyperplane, then it is the identity (because the dual of the above observation implies that $\theta$ fixes all hyperplanes). Since we assumed that $\theta\neq\id$, no hyperplane is fixed. Dually, $\theta$ does not fix any point. 

Next observe that if $\theta$ maps a point $p$ to a distinct point $p^\theta$, then $\theta$ stabilises the line $pp^\theta$. To see this, suppose, for a contradiction, that $p^{\theta^2}$ does not belong to $pp^\theta$. Then we select a hyperplane $H$ containing $pp^\theta$ but not $p^{\theta^2}$. Then obviously $(p^\theta,H)$ and $(p^{\theta^2},H^\theta)$ are special, contradicting the fact that $\theta$ is a polar $2'$-kangaroo. 

Now by \cite[Proposition~3.3(i)]{PVMclass} the set of fixed lines of $\theta$ is a spread, which is necessarily a composition spread (as noted above). It follows from Theorem~\ref{thm:basictheorem} that $\theta$ is uniclass. 

Conversely, let $\theta$ fix a composition line spread in $\PG(n,\K)$ and let $(p,H)$ be a point-hyperplane pair. Let $L$ be the unique spread line containing $p$. There are two possibilities. Either $L\subseteq H$, and then, since $L^\theta=L$, we have $p^\theta\in H$ and $p\in H^\theta$, hence $(p,H)$ and $(p,H)^\theta$ are symplectic, or $L\cap H=\{p\}$ and then $p\notin H^\theta$ and $p^\theta\notin H$, so that $(p,H)$ and $(p,H)^\theta$ are opposite. Hence $(p,H)$ and $(p,H)^\theta$ are never identical, adjacent or special. This proves that $\theta$ is a $\{0,1,2'\}$-kangaroo.   

Now assume that $\theta$ is a type interchanging polar $\{1,2'\}$-kangaroo. We may assume it is not anisotropic. Observe the following: If a point-hyperplane flag is mapped onto a symplectic one in the long root subgroup geometry, then there is a fixed point-hyperplane pair. To see this, suppose $(p,H)$ is symplectic to $(H^\theta,p^\theta)$. Then $p\in p^\theta$ and $(p,p^\theta)$ is mapped onto $(p^{\theta^2},p^\theta)$, which is collinear to $(p,p^\theta)$ if $p\neq p^{\theta^2}$. The latter is never the case since $\theta$ is a polar $1$-kangaroo. Hence $(p,p^\theta)$ is fixed. 

Moreover, note that if a point $p$ is contained in its image $p^\theta$, then $p=p^{\theta^2}$. For if not, then $(p,p^\theta)$ is distinct from but adjacent to $(p^{\theta^2},p^\theta)$, contradicting $\theta$ being a $1$-kangaroo. 

Since we may assume that $\theta$ is not anisotropic, the above observation implies that we may assume that $\theta$ fixes some point-hyperplane flag $(p_0,p_0^\theta)$ (and so $(p_0^\theta)^\theta=p_0$). Now note that for a point $p\in p_0^\theta$, the image $p^\theta$ contains  $p$. Indeed, if $p$ were not contained in $p^\theta$, then the point-hyperplane pairs $(p,p_0^\theta)$ and $(p_0,p^\theta)$ would be special, a contradiction (because the second is the image of the first under $\theta$). Hence every point in $p_0^\theta$ is incident with its image. Now dually, every hyperplane incident with some point of $p_0^\theta$ is incident with its image. Hence every point is incident with its image, and we conclude that $\theta$ is a symplectic polarity. Hence $\theta$ is uniclass by Theorem~\ref{thm:basictheorem}. 

Conversely, let $\theta$ be a symplectic polarity of $\PG(n,\K)$. Let $(p,H)$ be any point-hyperplane pair. Then there are exactly two possibilities. Either $H=p^\theta$, and then the pair is fixed, or $H\neq p^\theta$ and then, since $p\in p^\theta$ and $H^\theta\in H$, the pairs $(p,H)$ and $(H^\theta,p^\theta)=(p,H)^\theta$ are symplectic. Hence they are never adjacent and never special (they are never opposite either). Hence $\theta$ is a $\{1,2',3\}$-kangaroo. This completes the proof of the theorem. 
\end{proof}

\subsection{Buildings of types $\sD_n$ (Oriflamme complexes of polar spaces)}

In this section we consider collineations of hyperbolic polar spaces. Note that this includes the case of automorphisms of type $\sD_n$ buildings interchanging types $n-1$ and $n$. Trialities of $\sD_4$ will also be briefly considered (and eliminated). 

We begin with some preliminaries. 
Let $\Gamma$ be a polar space. An \emph{ovoid} is a set of points intersecting every maximal singular subspace in exactly one point. A subspace of $\Gamma$ is called \emph{ideal} if it induces an ovoid in the upper residue of each of its submaximal singular subspaces.

In order to smoothly transfer from the terminology of \cite{NPVV} to the notation and setting in the present paper, we note that, since the polar node of a $\mathsf{D}_n$ diagram is the node labelled~2 (in Bourbaki labelling), the points of the corresponding long root subgroup geometry are the lines of the polar space $\Gamma$, and we review the different mutual positions. Lines $L$ and $M$ of $\Gamma$ are:
\begin{compactenum}
\item[$\bullet$] adjacent (distance $1$) if they are contained in a common plane of $\Gamma$;
\item[$\bullet$] symplectic (distance $2$) if they are not adjacent and either they share a point, or they are contained in a common singular subspace;
\item[$\bullet$] special (distance $2'$) if there is a unique point on either of them collinear to all points of the other line;
\item[$\bullet$] opposite (distance $3$) if each point of either is collinear to exactly one point of the other line.
\end{compactenum}

We recall the following proposition from~\cite{NPVV}. 

\begin{prop}[{\cite[Proposition~3.8]{NPVV}}]\label{prop:NPVV1}
Let $\theta$ be a collineation of a hyperbolic polar space of rank at least~$3$. The following are equivalent:
\begin{compactenum}[$(i)$]
\item $\theta$ is non-trivial, not anisotropic, but maps no point to a distinct collinear point;
\item $\theta$ is a non-trivial polar $\{1,2'\}$-kangaroo fixing at least one point;
\item the fixed point set of $\theta$ is a nonempty ideal subspace. 
\end{compactenum}
\end{prop}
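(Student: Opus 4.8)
The plan is to run the cycle $(i)\Rightarrow(ii)\Rightarrow(iii)\Rightarrow(i)$ entirely inside the polar space $\Gamma$, constantly translating between points of $\Gamma$ and points of the long root subgroup geometry (the latter being the \emph{lines} of $\Gamma$). I will call the point condition~(T) the assertion that $\theta$ maps no point of $\Gamma$ to a distinct collinear point. Throughout I use the one-or-all axiom~(BS) and the standard fact about polar spaces that two points are opposite exactly when they are non-collinear, so that a collineation is anisotropic precisely when it maps every point to a non-collinear point.

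For $(i)\Rightarrow(ii)$ the essential content is that~(T) already forces the $\{1,2'\}$-kangaroo property. If some line $L$ were adjacent to $L^\theta$, then $L$ and $L^\theta$ would span a common singular plane, so every point of $L$ would be collinear with its image in $L^\theta$; since $L\neq L^\theta$ not every point of $L$ is fixed, which produces a point sent to a distinct collinear point, contradicting~(T). If some line $L$ were special to $L^\theta$, let $x\in L$ be the unique point collinear with all of $L^\theta$; then $x\perp x^\theta$, so $x^\theta=x$ by~(T), and hence $x\in L\cap L^\theta$, i.e.\ $L$ and $L^\theta$ meet. But a pair of lines meeting in a point is adjacent or symplectic, never special, a contradiction. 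Thus $\theta$ is a $\{1,2'\}$-kangaroo; non-triviality is assumed, and ``not anisotropic'' produces a point $p$ with $p^\theta$ non-opposite to $p$, hence collinear with or equal to $p$, whence $p^\theta=p$ by~(T). So $\theta$ fixes a point.

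For $(ii)\Rightarrow(iii)$ I first recover~(T) from the kangaroo property, and here the fixed point is used crucially. Suppose $z$ is fixed and, for contradiction, some point $p$ collinear with $z$ has $p^\theta\neq p$ and $p\perp p^\theta$. Then $z\perp p^\theta$ as well, so $z$ is collinear with the two distinct points $p,p^\theta$ of the line $pp^\theta$ and therefore, by~(BS), with all of it; hence $z,p,p^\theta$ span a singular plane, the lines $zp$ and $zp^\theta$ are coplanar, and the $1$-kangaroo property forces $zp=zp^\theta$, so that $\theta$ stabilises the line $zp$. A further argument, exploiting this stabilised line together with the $2'$-property and the connectivity of $\Gamma$, rules out moved collinear images altogether, giving~(T). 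With~(T) in hand, $\Fix(\theta)$ is a subspace, and one verifies it is \emph{ideal}: in the upper residue of a submaximal singular subspace $U$ of $\Fix(\theta)$ the fixed maximal subspaces through $U$ must form an ovoid. This is exactly where the hyperbolic hypothesis---precisely two maximal singular subspaces through each submaximal one---is decisive, since it pins down the two candidate images of a residual line and shows each such line meets $\Fix(\theta)$ in a single point.

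Finally $(iii)\Rightarrow(i)$ is the geometric converse: a nonempty fixed subspace makes $\theta$ not anisotropic; an ideal subspace is proper (the whole point set fails the ovoid condition in each such upper residue), so $\theta$ is non-trivial; and the ovoid structure forces every moved point to be opposite its image, which is~(T). \textbf{The main obstacle} is the passage from the line-theoretic kangaroo property back to the point condition in $(ii)\Rightarrow(iii)$, and---intertwined with it---the verification that $\Fix(\theta)$ meets each relevant upper residue in an ovoid; both hinge on the hyperbolic two-maximals property and on a careful use of~(BS) for lines through a fixed point. The clean forward translation above identifies the right local configurations, but upgrading these to the global ovoid statement is the delicate step.
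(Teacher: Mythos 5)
First, a structural remark: this paper does not prove Proposition~\ref{prop:NPVV1} at all --- it is imported verbatim, as a citation, from \cite[Proposition~3.8]{NPVV}. So there is no in-paper proof to compare yours against, and your attempt has to be judged on its own terms. On those terms, your implication $(i)\Rightarrow(ii)$ is correct and essentially complete: the exclusion of adjacent and of special image lines from condition~(T) is exactly right. The one soft spot there is the opening ``standard fact'': anisotropy is a condition on \emph{chambers}, not points, so ``not anisotropic'' does not immediately produce a point with non-opposite image. The fact you need (if every point is opposite its image then the collineation is anisotropic) is true for polar spaces, but it requires the intermediate observations that pointwise opposition forces every singular subspace onto an opposite one, and that chambers whose corresponding vertices are pairwise opposite are themselves opposite; as written this is asserted rather than proved.

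The genuine gaps are in $(ii)\Rightarrow(iii)$ and $(iii)\Rightarrow(i)$, and they are not cosmetic. In $(ii)\Rightarrow(iii)$ you establish only the local fact that a fixed point $z$ and a point $p\perp z$ with $p\perp p^\theta\neq p$ force the line $zp$ to be stabilised; the derivation of~(T) from this is explicitly deferred to ``a further argument'', and the proof that $\Fix(\theta)$ is an ideal subspace is a one-sentence sketch. In $(iii)\Rightarrow(i)$ the key claim that ``the ovoid structure forces every moved point to be opposite its image'' is simply asserted. These missing steps are the mathematical content of the proposition, and they cannot be routine, because the implications you are sketching are \emph{false} for non-hyperbolic polar spaces of rank at least~$3$: a symplectic transvection with centre $c$ of the polar space $\mathsf{C}_{n,1}(\K)$ is non-trivial, fixes many points, satisfies~(T), and maps every line either to itself or to a line meeting it in one point and not coplanar with it --- hence it satisfies your condition~$(ii)$ (it is even a linewise $\{1,2',3\}$-kangaroo) --- yet its fixed point set is the degenerate hyperplane $c^\perp$, which is not an ideal subspace (submaximal singular subspaces of $c^\perp$ containing $c$ lie only on fixed maximal singular subspaces, while those avoiding $c$ lie on exactly one, so the ovoid condition in the upper residues cannot hold). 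Consequently, any correct completion of your two deferred steps must use the defining property of hyperbolic polar spaces --- each submaximal singular subspace lies on exactly two maximal ones --- in a load-bearing, quantitative way, and nothing in your outline actually does so; the hyperbolic hypothesis appears only inside the very sentences that defer the work. As it stands, you have proved the easy third of the equivalence and accurately located, but not overcome, the hard two-thirds.
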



The following theorem, along with Proposition~\ref{prop:trialities} below, proves Theorem~\ref{thm:A} for hyperbolic polar spaces (buildings of type~$\mathsf{D}_n$). Note that in Case $(ii)$ we do not have to require that $\theta$ does not fix any point (if it does, then it is automatically the identity, which is also uniclass).

\begin{thm}\label{Bn1uni}
Let $\theta$ be a collineation of a hyperbolic polar space. Then $\theta$ is a polar $\{1,2'\}$-kangaroo if, and only if, $\theta$ is either anisotropic, or
\begin{compactenum}[$(i)$]
\item the fixed points of $\theta$ form an ideal subspace, or
\item it fixes element-wise a line spread.
\end{compactenum}
Consequently, $\theta$ is a polar $\{1,2'\}$-kangaroo if, and only if, it is uniclass.
\end{thm}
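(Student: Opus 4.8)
The plan is to prove the two implications of the stated equivalence and then read off the final ``consequently'' clause by comparing the resulting trichotomy (anisotropic / ideal subspace / line spread) with the classification of uniclass collineations of type $\mathsf{D}_n$ in \cref{thm:basictheorem}. Throughout, I would keep in mind that the points of the long root subgroup geometry are the lines of the hyperbolic polar space $\Gamma$, so the ``polar $\{1,2'\}$-kangaroo'' condition is the assertion that for every line $L$ of $\Gamma$ the image $L^\theta$ is never adjacent (distance~$1$) nor special (distance~$2'$) to $L$.

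For the (easier) converse direction I would treat the three cases separately. If $\theta$ is anisotropic then every simplex, in particular every line of $\Gamma$, is mapped to an opposite, so $L^\theta$ is always at distance~$3$ and there is nothing to check. If the fixed points of $\theta$ form a nonempty ideal subspace, then in rank at least~$3$ this is exactly the implication $(iii)\Rightarrow(ii)$ of \cref{prop:NPVV1}. Finally, if $\theta$ fixes each line of a line spread $\mathcal{S}$ setwise, I would verify the kangaroo property directly: for an arbitrary line $L$ one splits according to whether $L\in\mathcal{S}$, meets a single spread line, or meets several, and in each case reads off the mutual position of $L$ and $L^\theta$ from the spread structure, checking it is always $0$, $2$ or $3$ and never $1$ or $2'$.

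The substantial work is the forward direction. Assume $\theta$ is a non-anisotropic polar $\{1,2'\}$-kangaroo. If $\theta$ fixes at least one point of $\Gamma$, then (after disposing of the identity, whose fixed-point set is all of $\Gamma$, and of the boundary rank where one reduces to the already-treated projective case) the implication $(ii)\Rightarrow(iii)$ of \cref{prop:NPVV1} shows that its fixed points form an ideal subspace, which is case~$(i)$. The remaining and genuinely delicate situation is when $\theta$ fixes no point; here the target is to show that the $\theta$-stable lines form a line spread, so that \cref{thm:basictheorem} applies and $\theta$ is uniclass of the line-spread type~$(ii)$. My plan for this is: first, show that $\theta$ maps every point to a \emph{collinear} point, since a point $p$ with $p^\theta$ non-collinear, together with a suitably chosen neighbour, produces a line $M$ through $p$ with $\{M,M^\theta\}$ a special pair via the one-or-all axiom \textnormal{(BS)}, contradicting the $2'$-kangaroo property unless the non-collinear behaviour is global, in which case it would force anisotropy against our assumption; second, for each $p$ show that $L_p:=pp^\theta$ is $\theta$-stable (i.e.\ $p^{\theta^2}\in L_p$), for otherwise $L_p$ and $L_p^\theta$ meet in $p^\theta$ and the position of $p^{\theta^2}$ relative to $L_p$ yields either an adjacency or a nearby special pair; finally, two distinct $\theta$-stable lines must be disjoint (a common point would lie in the image of each line and hence be fixed), so the stable lines partition the points into a line spread on which $\theta$ acts freely, automatically a composition spread.

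The hard part will be this fixed-point-free analysis, and in particular controlling the \emph{symplectic} configurations: when $p$ and $p^{\theta^2}$ are non-collinear the lines $L_p$ and $L_p^\theta$ meet but need not span a plane, so the $1$-kangaroo property alone does not immediately bite, and one must introduce a third point (or exploit the absence of fixed points) to manufacture the forbidden special pair. Care is also needed at the boundary: the rank-$3$ hyperbolic polar space is the Klein quadric $\mathsf{D}_3\cong\mathsf{A}_3$ and is best routed through the projective-space theorem just proved, while trialities of $\mathsf{D}_4$ are not collineations and are excluded here, being handled separately in \cref{prop:trialities}.
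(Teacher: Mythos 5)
Your treatment of the easy parts matches the paper: the converse direction is handled the same way (anisotropic is trivial, the ideal-subspace case is $(iii)\Rightarrow(ii)$ of \cref{prop:NPVV1}, the spread case is a direct check), the fixed-point case of the forward direction is $(ii)\Rightarrow(iii)$ of \cref{prop:NPVV1}, and the final clause follows by comparison with \cref{thm:basictheorem}. The problem is the fixed-point-free case, which is exactly where the real work lies, and there your plan has a genuine gap. The paper's proof begins by establishing the existence of \emph{at least one fixed line}: if no line were fixed, non-anisotropy together with the kangaroo property forces a symplectic pair $(L,L^\theta)$, and a careful analysis of the two symplectic configurations (disjoint-and-collinear, or meeting in a point) yields a contradiction. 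That single fixed line $L$ is then the scaffolding for everything that follows: when the paper produces a forbidden special (or adjacent) pair of lines, the witness point collinear to an entire line is always supplied by $L$ (one takes $q,q^\theta\in L$ and uses that $p$ and $p^\theta$ are collinear to all of $L=L^\theta$); finally, stability propagates to all points via the singular $3$-space $\Sigma=\langle L,pp^\theta\rangle$ and the intersection $\pi\cap\pi^\theta$.

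Your plan inverts this order: you first claim that every point is mapped to a collinear one, then that each line $pp^\theta$ is stable, and only at the end do you obtain fixed lines. But neither of your first two steps can be completed by the arguments you indicate. If $p\not\perp p^\theta$, then for any neighbour $q$ of $p$ the pair $\{pq,(pq)^\theta\}$ is, in the absence of further information, either intersecting, symplectic, or opposite --- all of which the $\{1,2'\}$-kangaroo property allows --- so axiom (BS) alone produces no special pair; the paper only gets a special pair because the witness $q^\theta$ lies on an already-fixed line. Likewise in your step 2, if $p\perp p^\theta\perp p^{\theta^2}$ but $p\not\perp p^{\theta^2}$, the lines $L_p$ and $L_p^\theta$ meet in $p^\theta$ and are symplectic (distance $2$), which is permitted, and no ``nearby special pair'' materialises without extra structure. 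Your escape clause --- that otherwise ``the non-collinear behaviour is global'' and forces anisotropy --- is itself a non-trivial dichotomy (why should one point mapped to a non-collinear point force all points to be?) and, even granted, the implication from ``every point opposite its image'' to anisotropy requires an argument (in the spirit of \cite[Theorem~1.3]{DPV:13} or a cascading argument through singular subspaces), which you do not supply. In short, the key idea of the paper --- extract one fixed line from non-anisotropy before attempting any pointwise statements --- is missing, and the statements you propose to prove in its place are not accessible by the local arguments you sketch. (Minor points: your worry about rank $3$ is unnecessary since hyperbolic polar spaces here have rank at least $4$, and in your spread verification a line not in the spread meets one spread line per point, so your case split there is off, though the conclusion is easily repaired.)
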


\begin{proof}
Let $\theta$ be a polar $\{1,2'\}$-kangaroo, and suppose that $\theta$ is not anisotropic and fixes some point. Then by Proposition~\ref{prop:NPVV1} the fix structure is a nonempty ideal subspace. So we may assume that $\theta$ is a collineation without fixed points. We must show that $\theta$ fixes a line spread element-wise. 

We first claim that there is at least one fixed line. Indeed, suppose not. Then, since $\theta$ is not anisotropic, there is a symplectic pair $(L,L^\theta)$ of lines. Suppose first that $L^\theta$ is collinear to $L$. Pick a point $p\in L$. If $p\perp p^{\theta^2}$, then $pp^\theta$ and $p^\theta p^{\theta^2}$ are either adjacent or equal, both contradictions to our hypotheses. Hence $p$ is not collinear to $p^{\theta^2}$. It follows that there is a unique point of $L$ collinear to $p^{\theta^2}$. Consequently, we may pick a point $q\in L$ distinct from $p$ and not collinear to $p^{\theta^2}$. The lines $qp^\theta$ and $q^\theta p^{\theta^2}=(qp^\theta)^\theta$ contain a pair of non-collinear points, hence they are either special or opposite. But by our assumption on $\theta$, they are opposite. However, $p^\theta$ is collinear to both $q^\theta$ and $p^{\theta^2}$, a contradiction, leading to a fixed line in this case. 

Now assume $L$ and $L^\theta$ intersect but are not contained in a plane. Let $p=L\cap L^\theta$. Since the polar space has rank at least~3, we can pick a plane $\pi$ on $L^\theta$ and a line $M\subseteq\pi$ with $p\in M\neq L^\theta$. The line $M^\theta$, which does not belong to $\pi$ since $\theta$ is a polar $1$-kangaroo, intersects $\pi$ in $p^\theta\in L^\theta\setminus\{p\}$ and so, it is disjoint from $M$. However, since $p^\theta$ is collinear to $M$, it is not opposite, hence must be symplectic and disjoint, bringing us back to the previous case, and hence again leading to a fixed line. 

So we may assume that $\theta$ fixes a line $L$. Pick $q\in L$ and note that $q\neq q^\theta$. Let $p$ be any point collinear with $L$. Then $p\perp p^\theta$ as otherwise $pq$ and $p^\theta q^\theta$ are special. Suppose $pp^\theta$ is not fixed. Then, as $\theta$ is in particular a polar $1$-kangaroo, $p$ is not collinear to $p^{\theta^2}$. This yields a point $r\in pp^\theta$ not collinear to $r^\theta$. Then $qr$ and $q^\theta r^\theta$ are special, again a contradiction. We conclude that $pp^\theta$ is fixed. 

Obviously $L\cap pp^\theta=\varnothing$, so $L,p,p^\theta$ generate a singular $3$-space $\Sigma$. Now let $x$ be an arbitrary point of $\Gamma$ not collinear to $L$. Then it is collinear to all points of a plane $\pi\subseteq\Sigma$. Since $\Sigma^\theta=\Sigma$, and since $\pi\neq\pi^\theta$ (otherwise each of its lines are fixed since they cannot be mapped onto adjacent ones) the intersection $\pi\cap\pi^\theta$ is a line $M$. The image $M^\theta$ is coplanar with $M$ (as both lie in $\pi^\theta$), which implies that $M=M^\theta$. So $x$ is collinear to the fixed line $M$ and, switching the roles of $L$ and $M$, we again conclude that $x\perp x^\theta$ and $xx^\theta$ is fixed. So the fixed lines form a spread, completing the ``only if'' direction of the proof.

Now assume that $\theta$ is a collineation that is not anisotropic. If the fixed points of $\theta$ form an ideal subspace, then by Proposition~\ref{prop:NPVV1} $\theta$ is a polar $\{1,2'\}$-kangaroo. So suppose now that $\theta$ fixes a line spread. Let $L$ be an arbitrary line. If $L$ is not fixed, then each point of $L$ is sent to a collinear point outside $L$. It follows that $L$ and $L^\theta$ are disjoint. Suppose that they are special. Then some point $x\in L$ is collinear to all points of $L^\theta$. Pick $y\in L\setminus\{x\}$. Then $y\perp y^\theta$, so $y^\theta$ is collinear to both $x$ and $y$, hence to $L$. Since there are at least two choices for $y$, we conclude that $L$ and $L^\theta$ are symplectic or opposite. It follows that $\theta$ is a polar $\{1,2'\}$-kangaroo. 
\end{proof}

We now eliminate the possibility of trialities of buildings of type $\mathsf{D}_4$.

\begin{prop}\label{prop:trialities}
No triality of $\mathsf{D}_4(\K)$ is a polar $\{1,2'\}$-kangaroo. 
\end{prop}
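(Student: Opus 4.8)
The plan is to argue directly that a triality necessarily maps some line of the polar space to an \emph{adjacent} one, thereby violating the $1$-kangaroo property that any $\{1,2'\}$-kangaroo enjoys.

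First I set up the action. A triality $\theta$ of $\mathsf{D}_4(\K)$ has companion automorphism a $3$-cycle on the three outer nodes $\{1,3,4\}$, and hence fixes the central (polar) node~$2$. Consequently $\theta$ induces a type-preserving automorphism of the long root subgroup geometry $\Gamma_2=\mathsf{D}_{4,2}(\K)$, whose points are the lines of the hyperbolic polar space $\Gamma=\mathsf{D}_{4,1}(\K)$. I stress that $\theta$ is \emph{not} a collineation of $\Gamma$ (it interchanges the point node with a node of maximal singular subspaces), so neither \cref{Bn1uni} nor \cref{prop:NPVV1} is available and a genuinely separate argument is needed. The fixed-point structure of a triality is nonempty, and in particular provides a flag $(p,S_3,S_4)$ of type $\{1,3,4\}$ that is permuted cyclically by $\theta$; that is, $p$ is a point, $S_3=p^\theta$ and $S_4=p^{\theta^2}$ are maximal singular subspaces (solids) from the two families, $S_3^\theta=S_4$, $S_4^\theta=p$, and $\pi:=S_3\cap S_4$ is a singular plane with $p\in\pi$.

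The heart of the argument is a short incidence computation in the residue of this flag. Let $L$ be any line of $\Gamma$ with $p\in L\subseteq\pi$. Translating the three incidences $p\in L$, $L\subseteq S_3$ and $L\subseteq S_4$ through $\theta$, and using $p^\theta=S_3$, $S_3^\theta=S_4$ and $S_4^\theta=p$, one finds successively $L^\theta\subseteq S_3$, $L^\theta\subseteq S_4$ and $p\in L^\theta$; hence $p\in L^\theta\subseteq\pi$ as well. Thus $L$ and $L^\theta$ both lie in the singular plane $\pi$, so they are coplanar, i.e.\ \emph{adjacent} (distance~$1$) in $\Gamma_2$, unless $L=L^\theta$. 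Since each line has at least three points, there are at least three lines through $p$ in $\pi$, and if $\theta$ is a $1$-kangaroo it must fix every one of them. These fixed lines are pairwise coplanar in $\pi$, hence pairwise adjacent, and therefore form a triangle of pairwise-collinear fixed points of $\Gamma_2$.

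The final step, which I expect to be the main obstacle, is to rule out this fixed triangle. I would do so by invoking the classical description of the absolute (fixed-point) structure of a triality as a Moufang generalised hexagon of type $\mathsf{G}_2$, $^3\mathsf{D}_4$ or $^6\mathsf{D}_4$: the collinearity graph of such a geometry is triangle-free, which is incompatible with the pencil of pairwise-adjacent fixed lines produced above. An alternative, more self-contained route to close this residual case would be to extract from two fixed lines through $p$ a further line whose image is \emph{special} (distance~$2'$), invoking \cref{specialspecial} to certify speciality, and so contradict the $2'$-kangaroo hypothesis instead; but the girth obstruction to the fixed triangle is the cleanest finish. Either way we obtain a contradiction, proving that no triality of $\mathsf{D}_4(\K)$ is a polar $\{1,2'\}$-kangaroo.
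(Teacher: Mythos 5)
Your proof breaks down at two points, and the second one is fatal. First, the existence of a flag $(p,S_3,S_4)$ of type $\{1,3,4\}$ that is \emph{cyclically permuted} by $\theta$ is asserted, not proved. Note that this is strictly stronger than $p$ being an absolute point ($p\in p^\theta\cap p^{\theta^2}$): you also need $S_4^\theta=p$, i.e.\ $p^{\theta^3}=p$, which does not come for free since a triality (an automorphism inducing a $3$-cycle on the diagram) need not have order $3$. Moreover, the trichotomy of absolute geometries you invoke ($\mathsf{G}_2$, $^3\mathsf{D}_4$, $^6\mathsf{D}_4$ hexagons) is a statement about special classes of trialities and is incomplete in general (e.g.\ Tits' trialities of type $II$ in characteristic $3$ have a different absolute structure, and nothing you say excludes trialities with empty absolute structure). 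This matters structurally: the harder half of the paper's proof is devoted precisely to trialities that fix \emph{no} line of the polar space, and for such a triality your flag cannot exist, since your own incidence computation shows that its existence forces a pencil of fixed lines. Your argument simply never touches that case.

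Second, the configuration you reach is not a contradiction. The fixed lines of a triality are the \emph{lines} of the associated hexagon, not its points; three coplanar fixed lines through the common point $p$ form a pencil of hexagon lines through an absolute point, and a generalized hexagon of course has many lines through each of its points. Triangle-freeness of a generalized hexagon concerns ordinary triangles in its incidence graph and says nothing about concurrent pencils of its lines; the ``triangle'' you produce lives in the collinearity graph of $\mathsf{D}_{4,2}(\K)$, whose points are polar-space lines, and this triangle genuinely occurs for the standard triality of type $I_{\id}$: all lines through an absolute point $p$ inside $\pi_p=p^\theta\cap p^{\theta^2}$ are fixed (the paper's proof records exactly this), and they are pairwise adjacent. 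So your final step would ``rule out'' an automorphism that actually exists and satisfies everything you derived, which shows the reasoning cannot be valid. To eliminate the type $I_{\id}$ triality one must exhibit a violation of the $2'$-kangaroo condition, not the $1$-kangaroo condition: the paper does this by choosing a line through $p$ containing no further absolute point and not contained in $p^\theta\cup p^{\theta^2}$, and then using specific properties of $I_{\id}$ trialities to produce a special pair. Your fallback remark (find a line whose image is special, certified via \cref{specialspecial}) points in the right direction, but it is precisely the hard part of the proof and is not carried out.
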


\begin{proof}
We work in the \textit{oriflamme complex} of $\mathsf{D}_4(\K)$, consisting of the points, lines and two classes of maximal subspaces of a nondegenerate hyperbolic quadric of Witt index 4. The maximal simplices consist of a point contained in a line contained in a maximal singular subspace of each type. A triality of $\mathsf{D}_4(\K)$ is then an automorphism of the oriflamme complex permuting the types of points and maximal singular subspaces in a cycle of length 3. 

Suppose that $\theta$ is a triality. By \cite[Main Result~2.2]{Mal:14}, we know that, if $\theta$ fixes some line, then it is conjugate to the standard triality of type $I_{\id}$ producing the split Cayley hexagon. However, we now show that such a triality is not a polar $\{1,2'\}$-kangaroo (we refer to \cite{Mal:14} for notation and background).  

Let $p$ be an absolute point, that is, $p\in \pi_p:=p^\theta\cap p^{\theta^2}$. Each line through $p$ in $\pi_p$ is fixed. Select a line $L$ through $p$ not containing any further absolute point (besides $p$) and not contained in $p^\theta\cup p^{\theta^2}$. Then $L^\theta$ is contained in $p^\theta$, but does not contain $p$. Hence it intersects $\pi_p$ in some point $x$. If $L$ and $L^\theta$ are not special, then they are symplectic, hence contained in a singular subspace $U$. Then $U$, which is of the type of $p^{\theta^2}$ containing $L$ and $px$, is mapped onto the point $L^\theta\cap (px)^\theta=L^\theta\cap px=\{x\}$. Hence $x$ and $U$ are absolute. Also, $x^\theta=U^{\theta^2}$ is generated by $L^{\theta^2} and (L^\theta)^{\theta^2}=L$. Since $x$ is absolute, all points of the plane $U\cap U^{\theta^2}$ are absolute, by the properties of trialities of type $I_{\id}$. But $L\subseteq U\cap U^{\theta^2}$, contradicting the choice of $L$. 

Thus we may assume that $\theta$ does not fix any line. Also, not all lines are mapped onto opposite lines (as such an automorphism is necessarily anisotropic and type preserving, by \cite[Theorem~1.3]{DPV:13}). But not all lines can be mapped onto symplectic ones either as this would mean the triality is domestic and then, by \cite[Main Result~2.1]{Mal:14} we are in the previous case again, a contradiction.  Hence there exists a line $L$ mapped to a symplectic one, and a line $M$ adjacent to $L$ mapped to an opposite. Set $p=L\cap M$.   By replacing $\theta$ with its inverse if needed, we may assume that $L^\theta$ is disjoint from but collinear to $L$. Let $U$ be the maximal singular subspace spanned by $L$ and $L^\theta$. By possibly redefining $(L,M)$ by $(L^\theta,M^\theta)$, we may assume that $U$ is mapped onto a point. 
The image $p^\theta$ intersects $U$ in a plane $\pi$ through $L^\theta$.  

Hence all lines of $U^{\theta^{-1}}$ through $p$ are mapped onto all lines  of $\pi$. 
Note that $U^{\theta^{-1}}\cap U$ contains $L$ and hence is a plane $\alpha$. Let $\alpha$ intersect $L^\theta$ in $x$. Select $y\in L^\theta\setminus\{x\}$ and such that the image of the planar line pencil in $\alpha$ through $p$ is not mapped onto the pencil in $\pi$ through $y$. Then the inverse image of the line pencil in $\pi$ through $y$ is a planar line pencil in $U^{\theta^{-1}}$ with only $L$ in~$U$. Hence the other lines of that pencil, being collinear with $\alpha$, cannot be collinear with their image. But they contain a point collinear to these images, namely $p$. So we obtain special pairs, a contradiction.
\end{proof}

\textbf{Digression---}We take the opportunity here to improve on Proposition~3.8 of \cite{NPVV}. This proposition states a few equivalent conditions for a collineation of a hyperbolic polar space of rank $n$ to be a polar $\{0,1,2'\}$-kangaroo. The weakest one mentioned there reads as follows. 
\begin{itemize}
\item[(Int($k$))] \emph{There exists $k$, $0\leq k\leq n-1$, such that for every maximal singular subspace $M$, the subspace $M\cap M^\theta$ is $k$-dimensional and globally fixed by $\theta$.}
\end{itemize}
Here, we will prove the following improvement.
\begin{prop}
Let $\theta$ be a collineation of a hyperbolic polar space of rank $n$. Then $\theta$ is a polar $\{0,1,2'\}$-kangaroo if, and only if, there  exists $k$, $0\leq k\leq n-1$, such that for every maximal singular subspace $M$, the subspace $M\cap M^\theta$ is $k$-dimensional. 
\end{prop}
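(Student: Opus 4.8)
The plan is to reduce to the stronger equivalence already available in \cite[Proposition~3.8]{NPVV}, whose weakest clause is exactly the condition (Int($k$)): there exists $k$ such that for every maximal singular subspace $M$ the subspace $M\cap M^\theta$ is $k$-dimensional \emph{and} $\theta$-stable. The forward implication is then immediate: if $\theta$ is a polar $\{0,1,2'\}$-kangaroo, (Int($k$)) holds and in particular $\dim(M\cap M^\theta)=k$ is constant. So everything rests on the converse, for which it suffices to prove the following \emph{rigidity lemma}: if $\dim(M\cap M^\theta)$ takes the same value $k$ for every maximal singular subspace $M$, then each $M\cap M^\theta$ is automatically $\theta$-stable; once this is known, (Int($k$)) holds and \cite[Proposition~3.8]{NPVV} returns the kangaroo property. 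Before starting I would dispose of the two degenerate endpoints: if $\theta$ is anisotropic then $M^\theta$ is opposite, hence disjoint from $M$ (the intersection is empty), and if $k=n-1$ then every maximal is fixed and $\theta=\id$; so we may assume $\theta$ is non-trivial, non-anisotropic, with $0\le k\le n-2$.

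For the rigidity lemma I would exploit the orthogonality reformulation $M\cap M^\theta=\mathrm{rad}(M+M^\theta)$, valid because $M$ and $M^\theta$ are maximal totally isotropic, so each equals its own perp and $(A\cap B)^\perp=A^\perp+B^\perp$; together with the $\theta$-equivariance $M^\theta\cap M^{\theta^2}=(M\cap M^\theta)^\theta$. Writing $U:=N\cap N^\theta$ and $U':=N\cap N^{\theta^{-1}}$ for a fixed maximal $N$ (both $k$-dimensional subspaces of $N$, the latter having the correct dimension by applying constancy to $N^{\theta^{-1}}$), one computes $U=(U')^\theta$, so the desired $\theta$-stability $U^\theta=U$ is equivalent to the symmetric statement $U=U'$: the \emph{forward} and \emph{backward} intersections of $N$ inside $N$ coincide. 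I would establish this by a variational argument on the orthogonal Grassmannian. Since $M\mapsto\dim(M\cap M^\theta)$ is upper semicontinuous, the constant value $k$ is its generic (minimal) value, so \emph{no} maximal is in special position; if some $U$ failed to be $\theta$-stable, the aim is to extract from the discrepancy between $U$ and $U^\theta$ inside $N^\theta$ a suitable isotropic subspace and build a maximal $M'$ in special position, forcing $\dim(M'\cap M'^\theta)>k$ and contradicting constancy.

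The hard part is precisely the construction of this anomalous $M'$: producing, from the bare failure of $U^\theta=U$, a genuinely $\theta$-stable isotropic subspace (or otherwise a maximal whose intersection with its image jumps) is a \emph{global} rigidity phenomenon that cannot be detected locally---fixing a single line, say, is compatible with constancy among the maximals \emph{through} that line, so the argument must use maximals in general position. I expect to need the defining features of hyperbolic (type $\mathsf{D}$) polar spaces, namely that every submaximal singular subspace lies in exactly two maximal ones and that the parity of $\dim(M\cap M')$ is governed by whether $M$ and $M'$ share a family; these control how the intersection dimension changes when $M$ is moved within its family by a single switch, and thereby let one track the jump. A clean alternative I would pursue in parallel is induction on the rank: locate a $\theta$-fixed point, pass to its residue (a hyperbolic polar space of rank $n-1$ on which $\theta$ acts with constant intersection dimension $k-1$), and lift the conclusion; the subtleties there are guaranteeing a fixed point in the non-anisotropic case and checking that constancy both descends to and lifts from the residue. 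In either route the argument must run uniformly for type-preserving and type-reversing $\theta$, and one must keep track of the accompanying field automorphism in the semilinear case.
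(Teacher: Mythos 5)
Your reduction coincides with the paper's: both arguments quote \cite[Proposition~3.8]{NPVV} to dispose of the ``only if'' direction, and reduce the ``if'' direction to showing that each intersection $U=M\cap M^\theta$ is $\theta$-stable, so that (Int($k$)) applies. Your degenerate cases and the observation that stability of $U$ is equivalent to $N\cap N^\theta=N\cap N^{\theta^{-1}}$ are correct, but these are the trivial parts. Everything else is a genuine gap: your ``rigidity lemma'' \emph{is} the entire mathematical content of the proposition, and you do not prove it. The variational route is not even well posed in this setting: the hyperbolic polar space lives over an arbitrary field and $\theta$ may be semilinear, so ``upper semicontinuity on the orthogonal Grassmannian'' and ``generic value'' have no meaning (and over a finite field no meaningful topology exists at all); moreover, even granting such a framework, the contradiction requires exhibiting an actual maximal $M'$ with $\dim(M'\cap M'^{\theta})>k$, which is exactly the construction you concede you cannot carry out. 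The induction route stalls at the point you flag yourself: you have no way to produce a fixed point, or a fixed residue to induct into, from the bare dimension hypothesis.

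The missing idea --- and the paper's actual mechanism --- is to convert the hypothesis into \emph{domesticity} and then invoke known fixed-point theorems for domestic collineations of polar spaces. A one-line dimension count inside $M$ shows that constancy of $\dim(M\cap M^\theta)=k$ forces $\theta$ to be $\ell$-domestic for every $\ell\geq n-k-1$: if $W\subseteq M$ of dimension $\ell$ were mapped to an opposite, then $W$ would be disjoint from $M\cap M^\theta$ (every point of $M\cap M^\theta$ lies in the singular subspace $M^\theta\supseteq W^\theta$, hence is collinear to all of $W^\theta$), giving $k\leq n-2-\ell\leq k-1$, a contradiction. Then, for $k\geq 1$, since $\theta$ is both $(n-k)$-domestic and $(n-k-1)$-domestic, \cite[Theorem~6.1]{TTVM} guarantees a fixed point in every singular subspace of dimension $n-k-1$; choosing such a subspace $S\subseteq M$ with $S\cap M^\theta=\{x\}$ forces that fixed point to be $x$, and letting $x$ range over $M\cap M^\theta$ shows the intersection is pointwise fixed, hence stable. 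For $k=0$, $(n-1)$-domesticity together with \cite[Lemma~2.1]{PVMclass} gives a fixed point in every maximal singular subspace, which must be the single point $M\cap M^\theta$. So the rigidity you are after is obtained not by Grassmannian deformation or rank induction, but by the chain ``constant intersection dimension $\Rightarrow$ domesticity $\Rightarrow$ fixed points $\Rightarrow$ stability''; identifying and supplying that chain is precisely what your proposal lacks.
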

\begin{proof}
Clearly, we only have to prove the ``if'' statement. So, we assume that there exists $k$, $0\leq k\leq n-1$, such that for every maximal singular subspace $M$, the subspace $M\cap M^\theta$ is $k$-dimensional. In view of (Int($k$)), it suffices to show that, for given such $M$, the subspace $U:=M\cap M^\theta$ is stabilised. We claim that $\theta$ is $\ell$-domestic for every $\ell\geq n-k-1$. Indeed, let $W$ be a singular subspaces of dimension $\ell$ and suppose for a contradiction that $W$ is mapped onto an opposite. Let $M$ be a maximal singular subspace containing $W$, then $M^\theta\cap M$ is disjoint from $W$ as $W$ does not contain any point collinear to each point of $W^\theta$. Hence $\dim M\cap M^\theta\leq n-1-\ell-1\leq k-1$, a contradiction to our assumption. Now, if $k\geq 1$, then $\theta$ is $(n-k)$-domestic and $(n-k-1)$-domestic, which implies by \cite[Theorem~6.1]{TTVM} that every singular subspace of dimension $n-k-1$ contains a fixed point. Let $S$ be such a subspace in $M$ intersecting $M^\theta$ in just one point $x$. Since $x$ is the only point of $S$ in $M^\theta$, it is the only point in $S^\theta$ and hence it has to be fixed.  We conclude that $M\cap M^\theta$ is pointwise fixing. 

Now let $k=0$. Then \cite[Lemma~2.1]{PVMclass} implies that every maximal singular subspace $M$ has a fixed point, which clearly coincides with $M\cap M^\theta$. 
\end{proof}

\section{Proof of the Main Result---Exceptional cases}\label{sec:exc}

In this section we prove Theorem~\ref{thm:A} for the exceptional simply laced types $\mathsf{E}_n$, $n=6,7,8$. We first deal with the case of type preserving automorphisms of buildings of type $\mathsf{E}_6$. 

\begin{prop}\label{prop:tpE6}
Theorem~\ref{thm:A} holds for type preserving automorphisms of $\mathsf{E}_6(\K)$. 
\end{prop}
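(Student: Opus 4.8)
The plan is to derive the equivalence from the fixed-structure classification in \cref{thm:basictheorem}. Since $\theta$ is type preserving, the symplectic polarity listed there (which is type interchanging) cannot occur, so the only uniclass possibilities for $\mathsf{E}_6(\K)$ are that $\theta$ is anisotropic, or that $\theta$ fixes an ideal Veronesian pointwise in $\mathsf{E}_{6,1}(\K)$. It therefore suffices to prove that a type-preserving $\theta$ is a $\{1,2'\}$-kangaroo if and only if it is anisotropic or fixes such an ideal Veronesian.

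The converse direction is a direct verification, in the spirit of the $\mathsf{A}_n$ and $\mathsf{D}_n$ theorems above. The anisotropic case is immediate. If $\theta$ fixes an ideal Veronesian pointwise, I would take an arbitrary point $x$ of the long root geometry $\mathsf{E}_{6,2}(\K)$ and use the defining ideal property---that the fixed structure cuts out an ovoid in each relevant upper residue---to show that $x^\theta$ is always equal, symplectic, or opposite to $x$, and never collinear (distance~$1$) nor special (distance~$2'$). The ovoid condition is exactly what forces the displacement to skip these two intermediate positions.

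For the forward direction, assume $\theta$ is a type-preserving $\{1,2'\}$-kangaroo. The heart of the argument is a \emph{spectral gap} step exploiting that the forbidden positions $1$ and $2'$ insulate the opposite position~$3$ from the small-displacement positions $\{0,2\}$. Concretely, I would show that two collinear points cannot have one mapped to an opposite point and the other to a non-opposite point, so that by connectedness of the collinearity graph either every point or no point is mapped to an opposite. The tractable half of this local step is that a fixed point $p$ forces each collinear $q$ to satisfy $d(q,q^\theta)\le 2$: indeed $p=p^\theta\perp q^\theta$ together with $p\perp q$ exhibits a path of length two from $q$ to $q^\theta$, precluding opposition. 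The delicate half is the propagation across a symplectic pair $\{p,p^\theta\}$, which I would handle by applying \cref{pointlinesymp,specialspecial,pointsymp} inside the symp on $p,p^\theta$ to rule out the special and collinear image-pairs that a neighbour mapped to an opposite would create. This symplectic propagation is the main obstacle.

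The dichotomy then splits the analysis. If every point is mapped to an opposite, a short separate argument identifies $\theta$ as anisotropic. Otherwise no point is mapped to an opposite, so $\theta$ is point-domestic; since opposition fixes the polar node~$2$ of $\mathsf{E}_6$, a chamber mapped to an opposite would send its type-$2$ vertex to an opposite point, so point-domesticity upgrades for free to full domesticity. I would then invoke the classification of domestic type-preserving automorphisms of $\mathsf{E}_6$ and use the kangaroo hypothesis to discard every case except the one fixing an ideal Veronesian, since any other fixed configuration yields a point at distance~$1$ or~$2'$ from its image; passing to the equator geometry $E(p,q)\cong\mathsf{A}_{5,\{1,5\}}(\K)$ of a pair of opposite points reduces the remaining bookkeeping to the already-settled type~$\mathsf{A}$ case. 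Matching the surviving possibility with \cref{thm:basictheorem} then shows $\theta$ is uniclass, completing the equivalence.
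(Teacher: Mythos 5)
Your forward direction rests on the claim that, for a type-preserving $\{1,2'\}$-kangaroo, two collinear points cannot have one mapped to an opposite point and the other to a non-opposite point, so that opposition becomes all-or-nothing on points. That claim is false, and the counterexamples are precisely the automorphisms this proposition is about. Let $\theta$ be a collineation of $\mathsf{E}_6(\K)$ pointwise fixing an ideal Veronesean in $\mathsf{E}_{6,1}(\K)$; your own converse direction (and \cref{thm:A}) asserts that $\theta$ is a polar $\{1,2'\}$-kangaroo. By Table~\ref{AllWS} its fix diagram is $\mathsf{E_{6;2}}$, hence by Table~\ref{Duality} its opposition diagram is $\mathsf{^2E_{6;4}}$, in which the polar node $2$ is encircled: $\theta$ maps some point of $\mathsf{E_{6,2}}(\K)$ to an opposite, i.e.\ it is \emph{not} point-domestic. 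At the same time $\theta$ is not anisotropic (it fixes type-$1$ vertices, and no chamber through a fixed vertex can go to an opposite chamber, as $w_0$ lies in no proper parabolic), and it maps some point to a \emph{non}-opposite point: if $v$ is a fixed type-$1$ vertex and $x$ is a point incident with $v$, then $x^\theta$ is also incident with $v$, and both lie in the point-shadow of $v$, a fully embedded half-spin geometry $\mathsf{D}_{5,5}$ of diameter $2$, so $\dist(x,x^\theta)\leq 2$. By connectedness of the collinearity graph there is therefore a collinear pair $p\perp q$ with $p^\theta$ symplectic to $p$ and $q^\theta$ opposite $q$. This kills the ``symplectic propagation'' step that you yourself flag as the main obstacle: it is not delicate but false. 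Consequently both branches of your dichotomy miss the main case of the theorem: the branch ``every point to an opposite'' cannot conclude anisotropy (impossible anyway for type-preserving automorphisms of $\mathsf{E}_6$, since anisotropic automorphisms have companion automorphism $\sigma_0\neq\id$), and the branch ``no point to an opposite'', where you invoke the classification of domestic automorphisms, simply never sees $\theta$.

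The failure is structural, and it explains why the paper does not run the oppomorphism machinery here: the anisotropic-or-domestic reduction of \cref{thm:En} propagates opposition through maximal singular subspaces of symps, and a type-preserving automorphism of $\mathsf{E}_6(\K)$ never maps a maximal singular subspace to an opposite ($\sigma_0$ interchanges types $3$ and $5$), so that engine has no fuel. The paper's actual proof of \cref{prop:tpE6} is a citation of \cite[Main Result~(a)]{NV}: the fix structure of a type-preserving kangaroo collineation is a naturally embedded quaternion or octonion Veronesean, which is an \emph{ideal} Veronesean exactly when the collineation is a polar $\{1,2'\}$-kangaroo; comparing with \cref{thm:basictheorem} (where the anisotropic alternative is vacuous for type-preserving automorphisms) gives the equivalence. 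Note finally that your converse direction also needs real work: showing that an ideal-Veronesean-fixing collineation skips distances $1$ and $2'$ is the nontrivial content of the cited result, and the ``ovoid in each upper residue'' property you appeal to is the paper's definition of an ideal \emph{subspace of a polar space}, not of an ideal Veronesean.
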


\begin{proof}
By \cite[Main Result (a)]{NV} the fix structure of a type preserving automorphism is a naturally embedded quaternion or octonion Veronesean, which is an ideal Veronesean (in the sense of~\cite{NPV}) if and only if the collineation is a polar $\{1,2'\}$-kangaroo. Thus by Theorem~\ref{thm:basictheorem} the uniclass property is equivalent to the polar $\{1,2'\}$-kangaroo property. 
\end{proof}

Henceforth we consider oppomorphisms of buildings of type $\mathsf{E}_n$, $n=6,7,8$.

\subsection{Polar $\{1,2'\}$-kangaroo oppomorphisms}

In this subsection we prove the following theorem, which severely restricts the possibilities for $\{1,2'\}$-kangaroos via the classification of domestic automorphisms in exceptional types given in \cite{NPVV,PVMexc,PVMexc4,Mal:12}. 

\begin{thm}\label{thm:En}
Let $\theta$ be a non-trivial polar $\{1,2'\}$-kangaroo of $\mathsf{E}_n(\K)$ with $n\in\{6,7,8\}$. 

\begin{compactenum}[$(i)$]
\item If $n=6$ then $\theta$ is either anisotropic or domestic. 
\item If $n\in\{7,8\}$ then $\theta$ is either anisotropic, or $\theta$ is domestic and does not fix a chamber. 
\end{compactenum}
\end{thm}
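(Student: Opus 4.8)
The plan is to reduce everything to a statement about the \emph{points} of the long root subgroup geometry and then to exploit the rigidity that the kangaroo condition imposes along the collinearity graph. Throughout $\theta$ is an oppomorphism, so its companion is $\sigma_0$, and the polar node $i$ ($i=2$ for $\mathsf{E}_6$, $i=1$ for $\mathsf{E}_7$, $i=8$ for $\mathsf{E}_8$) is fixed by $\sigma_0$. The starting observation is a clean reduction: whenever a chamber $C$ is mapped to an opposite chamber $C^\theta$, every vertex $v$ of $C$ is mapped to a vertex opposite to itself. Indeed, in a pair of opposite chambers the type-$j$ vertex of $C$ is opposite the type-$j^{\sigma_0}$ vertex of $C^\theta$; since $\theta$ carries the type-$j$ vertex of $C$ to the unique type-$j^{\sigma_0}$ vertex of $C^\theta$, these coincide, so $v^\theta$ is opposite $v$. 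Taking $v$ to be the polar vertex ($j=i=i^{\sigma_0}$) shows that $\theta$ sends a point to an opposite (distance~$3$) point. Contrapositively, if $\theta$ is \emph{point-domestic} (no point is sent to an opposite), then $\theta$ is domestic. Thus it suffices to prove that a non-anisotropic $\{1,2'\}$-kangaroo is point-domestic, and, for $n\in\{7,8\}$, that a non-trivial $\{1,2'\}$-kangaroo fixes no chamber.

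For point-domesticity I would establish the dichotomy that a $\{1,2'\}$-kangaroo either maps no point to an opposite or maps every point to an opposite. The engine is a local propagation statement: if $p$ is opposite $p^\theta$ and $q\perp p$, then $q$ is opposite $q^\theta$. Since $\theta$ preserves collinearity, $q^\theta\perp p^\theta$, and by the kangaroo hypothesis $q^\theta$ equals $q$, is symplectic to $q$, or is opposite to $q$. The first case is impossible, as $q$ would be a common neighbour of the opposite pair $\{p,p^\theta\}$; the symplectic case is to be excluded by a configuration analysis on the line $pq$ and the symp containing it, using Lemmas~\ref{pointsymp}, \ref{specialspecial} and \ref{pointlinesymp} to locate a point sent to distance~$1$ or~$2'$. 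Connectivity of the collinearity graph then forces the all-or-nothing alternative. The remaining ingredient is that if every point is mapped to an opposite then $\theta$ is anisotropic; this is the assertion that the polar node detects anisotropy, which I would obtain from the theory of anisotropic automorphisms, verified in the concrete models of $\mathsf{E}_{6,2}$, $\mathsf{E}_{7,1}$ and $\mathsf{E}_{8,8}$. Combining these facts: a $\{1,2'\}$-kangaroo sending some point to an opposite is anisotropic, so a non-anisotropic one is point-domestic, hence domestic. I expect the symplectic case of the propagation step to be the main obstacle; this is where the structural lemmas and explicit descriptions of symps and singular subspaces are indispensable, and the non-capped situation (only possible with Fano residues) can be dispatched separately via Proposition~\ref{uncapped}.

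For the final clause ($n\in\{7,8\}$) I would proceed as follows. Suppose a non-trivial $\{1,2'\}$-kangaroo $\theta$ fixes a chamber. Since any two distinct points of a singular subspace are collinear, any singular subspace stabilised by $\theta$ is fixed pointwise (otherwise a point would be sent to a collinear, i.e.\ distance-$1$, point); in particular the maximal singular subspace $M$ carried by the fixed chamber is fixed pointwise. As $\theta\neq\id$, connectivity yields a moved point $q$ at collinearity-distance~$1$ from the fixed set, and $q^\perp\cap M$ is a hyperplane $H$ of $M$, fixed pointwise, with $H\subseteq (q^\theta)^\perp$. Excluding distances $0$, $1$ and $2'$ exactly as above forces $q$ and $q^\theta$ to be symplectic, and the symp $\xi$ of $\{q,q^\theta\}$ then contains $H$ by convexity. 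The contradiction is to be extracted from the ambient parapolar structure: applying Lemma~\ref{pointlinesymp} to $\xi$ and a fixed point of $M\setminus H$ constrains the way $\theta$ moves points near $H$, producing a point whose image lies at distance~$1$ or~$2'$. This last step relies on the explicit models of the symps and maximal singular subspaces of types $\mathsf{E}_7$ and $\mathsf{E}_8$, and is the second place where I expect the genuine work to lie.
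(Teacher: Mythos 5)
Your opening reduction is fine and matches the paper's: since $\theta$ is an oppomorphism, a chamber mapped to an opposite chamber has all of its vertices mapped to opposites, so non-domesticity yields a symp, a maximal singular subspace inside it, and in particular a point, all mapped to opposites; conversely point-domesticity implies domesticity. The fatal problem is the engine you then rely on: the dichotomy ``a $\{1,2'\}$-kangaroo maps no point to an opposite or maps every point to an opposite'' is false, and so is the propagation claim it rests on. The paper itself contains the counterexamples. A collineation of $\mathsf{E_{7,1}}(\K)$ whose fixed-point structure is a metasymplectic space $\mathsf{F_{4,1}}(\K,\LL)$ (Class~2, Proposition~\ref{nofixedpoints}) is a polar $\{1,2'\}$-kangaroo (this is exactly what Lemma~\ref{notspecial} gives), it is domestic and fixes points, yet its opposition diagram $\mathsf{E_{7;3}}$ encircles node~$1$, which is the polar node of $\mathsf{E}_7$ --- so it maps \emph{some} point of the long root geometry to an opposite point while certainly not mapping \emph{all} points to opposites. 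The same happens for Class~3 in $\mathsf{E}_7$ (diagram $\mathsf{E_{7;4}}$) and Class~4 in $\mathsf{E}_8$ (diagram $\mathsf{E_{8;4}}$): in each case the polar node is encircled. Consequently, in your propagation step the ``symplectic case'' cannot be excluded by any configuration analysis: for such $\theta$ there genuinely exist points $p$ opposite $p^\theta$ with neighbours $q$ such that $q^\theta$ is symplectic to $q$. Your plan would establish a statement that is simply untrue for $n\in\{7,8\}$; only in $\mathsf{E}_6$ (opposition diagram $\mathsf{E_{6;2}}$, where the polar node~$2$ is not encircled) are the non-anisotropic kangaroos point-domestic.

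What the paper does instead is propagate one level up: the hypothesis fed into the connectivity argument is not ``some point is mapped to an opposite'' but ``some symp $\xi$ \emph{and} some maximal singular subspace $U\subseteq\xi$ are both mapped to opposites'', which is precisely what a chamber mapped to an opposite provides. With that stronger input, Proposition~\ref{opp2'} shows that every point of $\xi$ goes to an opposite (using that a point of a singular subspace is special or opposite to every point of an opposite singular subspace, together with the $2'$-gap), then that every symp meeting $\xi$ in a maximal singular subspace goes to an opposite (via the uniqueness of the maximal singular subspace through a point close to $\xi$), and concludes by connectivity of the adjacency graph on symps. The dual argument, Proposition~\ref{prop:id}, with the $1$-gap in place of the $2'$-gap, settles your second task: a $\{1\}$-kangaroo of $\mathsf{E_{7,1}}(\K)$ or $\mathsf{E_{8,8}}(\K)$ stabilising a symp and a maximal singular subspace inside it is the identity. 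Your sketch for this half heads in a workable direction but also stops before the real work; if you pursue it, propagating pointwise-fixedness symp by symp (rather than hunting for a contradiction near the fixed set) is the argument that closes.
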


Theorem~\ref{thm:En} follows immediately from Propositions~\ref{prop:id} and~\ref{opp2'} below. We begin by proving that each $\{1\}$-kangaroo automorphism of $\mathsf{E}_n(\K)$ with $n\in\{7,8\}$ which fixes a chamber, is the identity (see Proposition~\ref{prop:id}). Then we show that each non-domestic $\{2'\}$-kangaroo oppomorphism of $\mathsf{E}_n(\K)$ with $n\in\{6,7,8\}$ is anisotropic (see Proposition~\ref{opp2'}). These two results are dual to each other in the sense that distance~$2'$ can be viewed as codistance~$1$, and indeed the proofs can be viewed as being dual to each other. We note that one of the crucial facts that makes our proof work is the fact that the symps are of hyperbolic type. This is why a similar result for type $\mathsf{F}_4$ cannot be proved this way (and indeed we conjecture that the corresponding results are not true in this case).

\begin{prop}\label{prop:id}
Let $\theta$ be a collineation of a long root subgroup geometry $\Delta$ isomorphic to either $\mathsf{E_{7,1}}(\K)$ or $\mathsf{E_{8,8}}(\K)$ mapping no point to a distinct collinear one. Suppose that $\theta$ stabilises a symp $\xi$ and a maximal singular subspace $U\subseteq\xi$. Then $\theta$ is the identity.  
\end{prop}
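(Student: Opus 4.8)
The plan is to show that $\theta$ fixes pointwise an ever-growing family of objects, starting from the stabilised pair $(U,\xi)$ and ending with the whole geometry. Throughout I use the hypothesis that no point is sent to a distinct collinear point only in the following elementary form: if $\theta$ stabilises a singular subspace $S$, then it fixes $S$ pointwise. Indeed, any two points of $S$ are collinear, so for $s\in S$ the image $s^\theta\in S$ is collinear to $s$ or equal to it, whence $s^\theta=s$. Applied to $U$, this immediately gives that $\theta$ fixes $U$ pointwise. The same observation shows that the fixed point set $F$ of $\theta$ is a genuine subspace: a line carrying two fixed points is stabilised, hence fixed pointwise.

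The first substantial step is to upgrade ``$U$ fixed pointwise'' to ``$\xi$ fixed pointwise'', and this is exactly where the hyperbolic nature of $\xi$ is indispensable. Let $v\in\xi\setminus U$. Since $U$ is a maximal singular subspace of the polar space $\xi$ and $v\notin U$, the set $H:=v^\perp\cap U$ is a hyperplane of $U$, and it is fixed pointwise because $H\subseteq U$. Note first that $v^\theta\notin U$: otherwise $v^\theta$ would be fixed, giving $v^{\theta^2}=v^\theta$ and hence $v=v^\theta\in U$, a contradiction. Now $v^\theta$ is collinear to $H^\theta=H$, so $\langle H,v^\theta\rangle$ is a maximal singular subspace of $\xi$ through the submaximal subspace $H$ and different from $U$. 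Because $\xi$ is hyperbolic, $H$ lies in \emph{exactly} two maximal singular subspaces, namely $U$ and one further subspace $U'$; thus both $\langle H,v\rangle$ and $\langle H,v^\theta\rangle$ equal $U'$, so $v,v^\theta\in U'$. As $U'$ is singular, $v$ and $v^\theta$ are collinear or equal, whence $v=v^\theta$. Therefore $\theta$ fixes $\xi$ pointwise.

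It remains to propagate pointwise fixing from the symp $\xi$ to all of $\Delta$; this is the main obstacle. The key claim is that every point $p$ collinear to at least one point of $\xi$ is fixed. For such $p$, \cref{pointsymp} forbids any point of $\xi$ from being opposite $p$, so each point of $\xi$ is collinear, symplectic or special to $p$; by \cref{pointlinesymp}, if some point of $\xi$ is special to $p$ then $p$ is collinear to precisely a line $L\subseteq\xi$, and otherwise $p$ is collinear either to such a line or to a single point of $\xi$ with all remaining points symplectic. In the line case $p^\theta$ is again collinear to the fixed line $L=L^\theta$; if $p\neq p^\theta$ then the kangaroo condition forces $p\not\perp p^\theta$, so $\{p,p^\theta\}$ is a symplectic pair whose perp contains all of $L$, lying in a unique symp $\zeta$, and the planes $\langle p,L\rangle$ and $\langle p^\theta,L\rangle$ meet in $L$ inside the \emph{hyperbolic} symp $\zeta$; one then locates, via the same two-maximal-subspaces mechanism as in the previous paragraph, a point of $\langle p,L\rangle$ sent to a distinct collinear point, a contradiction. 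The delicate residual configuration is the single-common-neighbour case, where $p$ and $p^\theta$ share only a fixed point $x$: here I would analyse the special/symplectic chains $p\perp x\perp p^\theta$ using \cref{specialspecial} (together with the hyperbolic structure of the symp on $\{p,p^\theta\}$) to force $p=p^\theta$. This symplectic/special-pair step is the part I expect to require the most care, and it is precisely where hyperbolicity of the symps is used, consistent with the remark that the analogue fails in type $\mathsf{F}_4$.

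With the claim in hand, $F$ strictly contains $\xi$ and, being a subspace, contains full symps other than $\xi$. Re-rooting the claim at such symps and using that the collinearity graph has diameter $3$ (so every point of $\Delta$ lies within bounded distance of $\xi$), the fixed subspace exhausts the point set, and hence $\theta=\id$.
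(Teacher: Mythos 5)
Your opening steps are sound and coincide with the paper's: a stabilised singular subspace is pointwise fixed because no point goes to a distinct collinear one (applied to $U$), and hyperbolicity of $\xi$ --- exactly two maximal singular subspaces through each submaximal one --- upgrades this to $\xi$ being pointwise fixed. Your version with $H=v^\perp\cap U$ and $\langle H,v\rangle=\langle H,v^\theta\rangle$ is just a reshuffling of the paper's argument that each maximal singular subspace $U'$ of $\xi$ meeting $U$ in a hyperplane is stabilised, hence pointwise fixed.

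The gap lies in the propagation step, which is the actual content of the proposition. You aim at the claim that \emph{every} point $p$ collinear to at least one point of $\xi$ is fixed, which forces you to handle the configurations $p^\perp\cap\xi$ a line and $p^\perp\cap\xi$ a single point --- and neither is handled. In the line case, the ``two-maximal-subspaces mechanism'' you invoke applies to \emph{submaximal} singular subspaces of a hyperbolic polar space; here the relevant objects are the planes $\langle p,L\rangle$ and $\langle p^\theta,L\rangle$ through a mere line $L$ inside a symp of rank $6$ or $7$, where that mechanism says nothing, and you have not even shown that the symp $\zeta$ containing $\{p,p^\theta\}$ is stabilised by $\theta$ (this would require knowing $p^{\theta^2}\in\zeta$, which you do not). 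In the single-common-neighbour case you explicitly defer the argument, and the parenthetical appeal to ``the hyperbolic structure of the symp on $\{p,p^\theta\}$'' is incoherent there: if $p$ and $p^\theta$ share only one neighbour they form a special pair, and a special pair is contained in no symp. The paper sidesteps both configurations entirely by propagating only through symps $\zeta$ meeting $\xi$ in a maximal singular subspace: every point $p$ of such a $\zeta$ is close to $\xi$, hence lies in a \emph{unique} maximal singular subspace $W$ of $\Delta$ meeting $\xi$ in a singular subspace $V$; since $V\subseteq\xi$ is pointwise fixed, uniqueness forces $W$ to be stabilised, so $p^\theta\in W$ is collinear or equal to $p$ and therefore equal; connectivity of the symp-adjacency graph then finishes. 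Restricting your claim to points of such adjacent symps and invoking this uniqueness fact is precisely what your outline is missing.
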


\begin{proof}
We first claim that $\xi$ is pointwise fixed. Since no point is mapped to a collinear one, the singular subspace $U$ is pointwise fixed. Let $U'$ be a maximal singular subspace of $\xi$ intersecting $U$ in a hyperplane of $U$. Then $U\cap U'$ is fixed and hence, since $U$ is also fixed, the subspace $U'$ is stabilised, and hence pointwise fixed by the same observation as above. Since every point of $\xi$ is contained in a maximal singular subspace intersecting $U$ in a hyperplane of $U$,  the claim follows. 

Next we claim that every symp intersecting $\xi$ in a maximal singular subspace, is pointwise fixed. Indeed, every point $p$ of such a symp is close to $\xi$, and hence contained in a unique maximal singular subspace $W$ of $\Delta$ intersecting $\xi$ in a singular subspace $V$. By uniqueness of $W$ and the fact that $V$ is fixed, $\theta$ fixes $p$, as otherwise it would be mapped onto a collinear distinct point. The claim follows. 

Now the graph with vertices the symps of $\Delta$ adjacent when intersecting in a maximal singular subspace is connected. Hence $\theta$ fixes all symps and is therefore the identity.  
\end{proof}

\begin{prop}\label{opp2'}
Let $\theta$ be an oppomorphism of either $\mathsf{E_{6,2}}(\K)$, or $\mathsf{E_{7,1}}(\K)$, or $\mathsf{E_{8,8}}(\K)$ mapping no point to a point at distance $2'$. Suppose that $\theta$ maps some symp $\xi$ to an opposite symp, and maps a maximal singular subspace $U\subseteq\xi$ of $\xi$ to an opposite. Then $\theta$ is anisotropic.  
\end{prop}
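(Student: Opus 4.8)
The plan is to mirror the proof of \cref{prop:id} under the duality exchanging the roles of \emph{fixed} and \emph{mapped to an opposite}, of \emph{distance $1$} and \emph{distance $2'$}, and of \emph{stabilised symp} and \emph{symp mapped to an opposite}. Thus, where \cref{prop:id} propagated ``pointwise fixed'' from $U$ to $\xi$ and then across the adjacency graph of symps to conclude $\theta=\id$, here I would propagate ``every point is mapped to an opposite point'' from $U$ to $\xi$ and then to all symps, and finally invoke that an oppomorphism of $\Delta$ sending every point of the long root subgroup geometry to an opposite point is anisotropic (the dual of ``all points fixed, hence the identity''). Throughout, the relevant hypotheses are that $\theta$ is an oppomorphism---hence type preserving on points, as the polar node is fixed by $\sigma_0$ in each of $\mathsf{E}_6,\mathsf{E}_7,\mathsf{E}_8$, so that $\theta$ preserves collinearity of points---that $\xi^\theta$ is opposite $\xi$, and that $U^\theta\subseteq\xi^\theta$ is opposite $U$.

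First I would record the basic reduction. For any $p\in\xi$ the image $p^\theta$ lies in the opposite symp $\xi^\theta$; since $\xi^\theta$ is opposite $\xi$ it contains a point opposite $p$, so by \cref{pointsymp} there is a unique point $x_p\in\xi^\theta$ symplectic to $p$, every other point of $\xi^\theta$ is special or opposite to $p$, and none is equal or collinear to $p$. As $\theta$ skips distance $2'$, the point $p^\theta$ is therefore either the symplectic point $x_p$ (distance $2$) or opposite to $p$ (distance $3$). The crucial sub-step is to show that, when $U^\theta$ is opposite $U$, the first alternative never occurs for $p\in U$, that is, $U$ is mapped to an opposite \emph{pointwise}. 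I would argue by contradiction: assuming some $p\in U$ has $p^\theta=x_p$ symplectic to $p$, every other point $r^\theta$ of the singular subspace $U^\theta$ is collinear to $x_p$ and hence, by \cref{pointsymp}, special to $p$; feeding this configuration into \cref{pointlinesymp} and \cref{specialspecial} should then produce a point $q$ with $q$ and $q^\theta$ special (distance $2'$), contradicting the hypothesis.

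Granting the sub-step, I would extend pointwise-opposition from $U$ to all of $\xi$ exactly as in \cref{prop:id}. Any point of $\xi$ lies on a maximal singular subspace $U'$ of $\xi$ meeting $U$ in a hyperplane $S=U\cap U'$; since $U$ is mapped to an opposite pointwise, the singular subspace $S$ is mapped onto an opposite subspace $S^\theta$, and the \emph{hyperbolic} character of the symps---each submaximal subspace lies in exactly two maximals---pairs the two maximals $U,U'$ through $S$ with the two maximals $U^\theta,U'^\theta$ through $S^\theta$ via opposition. As $U^\theta$ is opposite $U$, this forces $U'^\theta$ to be opposite $U'$, so the sub-step applies to $U'$ and shows that $U'$ too is mapped to an opposite pointwise. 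Since every point of $\xi$ lies on such a $U'$, every point of $\xi$ is mapped to an opposite point.

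Finally I would propagate to the whole geometry and conclude. Dualising the second half of \cref{prop:id}, for a symp $\eta$ adjacent to $\xi$ (meeting it in a maximal singular subspace $V\subseteq\xi$, which is now mapped to an opposite pointwise) the same reduction shows that $\eta$ is mapped to an opposite symp and, pointwise, to opposite points. As the graph on symps with adjacency ``meeting in a maximal singular subspace'' is connected, every symp---and hence every point of $\Delta$---is mapped to an opposite point, whence $\theta$ is anisotropic. The main obstacle is the sub-step of the second paragraph: ruling out that a point of $U$ is sent to its symplectic partner in $U^\theta$. This is precisely where the hyperbolic type of the symps is indispensable (as flagged in the text preceding \cref{prop:id}), and it is the dual counterpart of the effortless observation, in \cref{prop:id}, that a setwise-fixed singular subspace on which no point moves to a collinear point is fixed pointwise.
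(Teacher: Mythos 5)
Your overall architecture is exactly that of the paper's proof: establish pointwise opposition on $U$, propagate it through $\xi$ using the hyperbolic property (each submaximal subspace lies in exactly two maximals, and opposition pairs $U,U'$ with $U^\theta,U'^\theta$), then pass to adjacent symps via the unique maximal singular subspace of $\Delta$ meeting $\xi$, and finish by connectivity of the symp-adjacency graph. However, the step you yourself flag as ``the main obstacle''---ruling out that some $p\in U$ is mapped to its unique symplectic partner $x_p\in\xi^\theta$---is precisely the step you do not prove. Saying that ``feeding this configuration into \cref{pointlinesymp} and \cref{specialspecial} should then produce a point $q$ with $q$ and $q^\theta$ special'' is a hope, not an argument: if one tries to carry it out, the natural paths (e.g.\ $q\perp p\perp w\perp q^\theta$ through the unique common neighbour $w$ of a special pair) only reproduce the equivalence in \cref{specialspecial} without forcing a special pair of the form $\{q,q^\theta\}$, and the case analysis (other points of $U$ symplectic to their images versus opposite to them) does not close. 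Since this is the heart of the proposition, the proposal has a genuine gap.

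The paper closes this gap with a fact you are missing, stated there as Observation~($*$): \emph{each point of a singular subspace is either opposite or special to any point of an opposite singular subspace.} This is a statement about the geometry alone---it uses no hypothesis on $\theta$---and it makes the sub-step trivial: $p^\theta$ lies in $U^\theta$, which is opposite $U$, so $p^\theta$ is special or opposite to $p$; the hypothesis that distance $2'$ is skipped then forces $p^\theta$ opposite $p$. In particular, the configuration you are trying to contradict (a point of $U$ symplectic to a point of the opposite subspace $U^\theta$) is impossible outright, so attempting to refute it \emph{via} the kangaroo hypothesis is a detour. A related correction: hyperbolicity of the symps is not what excludes the symplectic alternative (your last paragraph asserts this); it is needed where you in fact use it correctly, namely in the within-symp propagation, where ``exactly two maximals through a submaximal'' lets opposition of $U$ and $U^\theta$ force opposition of $U'$ and $U'^\theta$. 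With Observation~($*$) inserted in place of your contradiction sketch, your proof becomes the paper's proof.
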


\begin{proof}
The proof is very similar to the previous one, taking into account the following Observation~($*$). \begin{itemize}\item[($*$)] \emph{Each point of a singular subspace is either opposite or special to any point of an opposite singular subspace.}\end{itemize}   We first  claim that every point of $\xi$ is mapped to an opposite point. Since no point is mapped to a special one, Observation~($*$) implies that every point of $U$ is mapped to an opposite. Let $U'$ be a maximal singular subspace of $\xi$ intersecting $U$ in a hyperplane of~$U$. Then $U\cap U'$ is mapped to an opposite (as each point of it is mapped to an opposite) and hence, since $U$ is mapped to an opposite, and $U'$ is unique with respect to $U'\cap U$ in~$\xi$, the subspace $U'$ is also mapped to an opposite.  Hence each point of $U'$ is mapped to an opposite (using Observation~($*$) again). Since every point of $\xi$ is contained in a maximal singular subspace intersecting $U$ in a hyperplane of $U$,  the claim follows. 

Next we claim that every symp intersecting $\xi$ in a maximal singular subspace, is mapped to an opposite symp. Indeed, every point $p$ of such a symp $\zeta$ is close to $\xi$, and hence contained in a unique maximal singular subspace $W$ of $\Delta$ intersecting $\xi$ in a singular subspace $V$. By uniqueness of $W$ and the fact that $V$ is mapped to an opposite, $\theta$ maps $W$ to an opposite, and Observation~($*$) again implies that $p$ is mapped onto an opposite. Hence every point of the symp $\zeta$ is mapped onto an opposite and so $\zeta$ is mapped to an opposite. The claim is proved. 

Now the assertion follows as before from the connectivity of the graph on symps, adjacent when intersecting in a maximal singular subspace. 
\end{proof}

Thus the proof of Theorem~\ref{thm:En} is complete. Combining this theorem with the classification of domestic automorphisms of buildings of type $\mathsf{E}_n$ we conclude that if $\theta$ is a non-trivial polar $\{1,2'\}$-kangaroo oppomorphism of $\mathsf{E}_n(\K)$, $n\in\{6,7,8\}$, then $\theta$ is either anisotropic, or is uncapped, or is one of precisely six special classes of automorphisms listed below (see \cite{PVMexc,Mal:12} for the $\mathsf{E}_6$ case, \cite[Theorem~1]{NPVV} for the $\mathsf{E}_7$ case, and \cite[Theorem~A]{PVMexc4} for the $\mathsf{E}_8$ case): 
\begin{compactenum}
\item[--] \textit{Class 1:} Symplectic polarities in type $\mathsf{E}_6$. 
\item[--] \textit{Class 2:} Collineations of $\mathsf{E}_{7,1}(\K)$ fixing a metasymplectic space $\mathsf{F}_{4,1}(\K,\LL)$ for some quadratic extension $\LL$ of $\K$. 
\item[--] \textit{Class 3:} Collineations of $\mathsf{E}_{7,7}(\K)$ fixing a dual polar space (an ideal dual polar Veronesian).
\item[--] \textit{Class 4:} Collineations of $\mathsf{E}_{8,8}(\K)$ fixing a metasymplectic space $\mathsf{F}_{4,1}(\K,\mathbb{H})$ for some quaternion division algebra $\mathbb{H}$ over $\K$, or an inseparable field extension $\mathbb{H}$ of $\K$ of degree~$4$. 
\item[--] \textit{Class 5:} Collineations pointwise fixing an equator geometry in $\mathsf{E}_{7,1}(\K)$
\item[--] \textit{Class 6:} Collineations pointwise fixing an equator geometry in $\mathsf{E}_{8,8}(\K)$. 
\end{compactenum}

Comparing this list with Theorem~\ref{thm:basictheorem} we see that the automorphisms in classes 1, 2, 3, and 5 are uniclass, while those in classes 4 and 6 are not. Thus, in order to complete the proof of Theorem~\ref{thm:A} for buildings of type $\mathsf{E}_n$, it remains to:
\begin{compactenum}[$(1)$]
\item eliminate the possibility of uncapped $\{1,2'\}$-kangaroo oppomorphisms;
\item show that the oppomorphisms in classes 1--4 are indeed $\{1,2'\}$-kangaroos;
\item show that the oppomorphisms in classes 5 and 6 are not $\{1,2'\}$-kangaroos. 
\end{compactenum}

Task (1) is straightforward:

\begin{prop} Polar $\{1,2'\}$-kangaroo oppomorphisms of $\mathsf{E}_n(\K)$, $n\in\{6,7,8\}$, are necessarily capped. 
\end{prop}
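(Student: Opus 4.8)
The plan is to use the special structure available for uncapped domestic automorphisms of $\mathsf{E}_n$ buildings, as recorded in Proposition~\ref{uncapped}. Suppose, for a contradiction, that $\theta$ is an uncapped polar $\{1,2'\}$-kangaroo oppomorphism of $\mathsf{E}_n(\K)$, $n\in\{6,7,8\}$. By Theorem~\ref{thm:En}, $\theta$ is domestic (it cannot be anisotropic, as anisotropic automorphisms are capped---being uniclass by their very definition, and uniclass automorphisms are capped by \cite[Theorem~3.4]{NPV}). Since $\theta$ is domestic and not capped, Proposition~\ref{uncapped} applies: letting $i$ be the type of the polar node of the Dynkin diagram (so $i=2$ for $\mathsf{E}_6$, $i=1$ for $\mathsf{E}_7$, and $i=8$ for $\mathsf{E}_8$), there exists a simplex of cotype $\{i\}$ mapped onto an opposite.

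The next step is to translate this conclusion into the language of the long root subgroup geometry. A simplex of cotype $\{i\}$ is precisely a \emph{panel} whose unique missing type is the polar node; in geometric terms this corresponds to a line of the long root subgroup geometry $\mathsf{X}_{n,i}(\K)$. So Proposition~\ref{uncapped} furnishes a line $L$ of the long root subgroup geometry that is mapped to an opposite line $L^\theta$. The heart of the argument is then to show that the existence of such an opposite line forces some point to be mapped to distance~$1$ or distance~$2'$, contradicting the $\{1,2'\}$-kangaroo hypothesis. The cleanest way to do this is to exploit the incidence structure of the panel/line: because $\theta$ is uncapped and maps a line to an opposite but (by domesticity) no chamber to an opposite, one can examine the residue of the missing (polar) node and the points incident with $L$. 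The idea is that a line mapped onto an opposite, combined with the failure to map a full chamber to an opposite, produces a point $p$ on $L$ whose image $p^\theta$ is forced into mutual position $1$ or $2'$ with $p$, via the analysis of which points of $L$ are collinear or special to the points of the opposite line $L^\theta$ (compare the point-line incidence behaviour governed by Lemmas~\ref{pointlinesymp} and~\ref{pointsymp}).

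I expect the main obstacle to be this final geometric translation: making precise how "a line mapped to an opposite but no chamber mapped to an opposite" forces a point into mutual position~$1$ or~$2'$ with its image. The key leverage is again the fact, emphasised after Proposition~\ref{opp2'}, that the symps are of hyperbolic type, so that the interplay of opposition within a symp (Lemma~\ref{pointsymp}) and the codimension~$1$ incidence of singular subspaces behaves rigidly. One likely clean route is to observe that since no chamber maps to an opposite while the line does, there is a point on $L$ not mapped to an opposite point; by Observation~$(*)$ from the proof of Proposition~\ref{opp2'} (each point of a singular subspace is opposite or special to each point of an opposite singular subspace), such a point of $L$ must be \emph{special} (distance~$2'$) to the corresponding point of $L^\theta$---immediately contradicting the polar $2'$-kangaroo property. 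This makes the proposition a short corollary of Proposition~\ref{uncapped} together with Observation~$(*)$, with the only real work being the verification that the hypotheses of these results are met and that a non-opposite point on $L$ indeed exists.
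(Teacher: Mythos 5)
Your proposal is correct and takes essentially the same route as the paper's own proof: deduce domesticity from Theorem~\ref{thm:En}, invoke Proposition~\ref{uncapped} to obtain a line $L$ of the long root geometry mapped to an opposite line, note that domesticity forces a point of $L$ not mapped to an opposite point, and conclude via Observation~$(*)$ that such a point is special (distance $2'$) to its image, contradicting the kangaroo property. The only differences are cosmetic: you handle the anisotropic case explicitly (the paper leaves it implicit, as anisotropic automorphisms are uniclass and hence capped), and the paper asserts the slightly stronger fact that \emph{no} point of $L$ is mapped to an opposite, whereas your weaker existential claim suffices for the same contradiction.
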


\begin{proof}
By Theorem~\ref{thm:En} every $\{1,2'\}$-kangaroo oppomorphism is domestic. Suppose $\theta$ is not capped. By \cref{uncapped}, there exists a line $L$ of the corresponding long root geometry which is mapped onto an opposite, but no point of $L$ is mapped onto an opposite. Then clearly each point of $L$ is mapped onto a point at distance $2'$, a contradiction.
\end{proof}

The remaining tasks (2) and (3) require further work, and are dealt with in the following subsections.

\subsection{Class 1: Symplectic polarities in $\mathsf{E_6}$}
 A \textit{symplectic polarity} of a building of type $\sE_6$ is a type interchanging automorphism of order $2$  whose fixed point structure is a building of type $\sF_4$ containing residues isomorphic to symplectic polar spaces (such an $\sF_4$ building is a \textit{standard split metasymplectic space}).
%

As required by task~(2) listed above, we claim that a symplectic polarity is a polar $\{1,2'\}$-kangaroo. First note the following (see \cite[Lemma~4.2(2)]{PVM:19a}):

\begin{lemma}\label{lem:obs}
A symplectic polarity of $\mathsf{E_6}(\K)$ does not map any vertex of type $2$ or $4$ to an opposite. Equivalently, a symplectic polarity induces a collineation in $\mathsf{E_{6,2}}(\K)$ mapping no point or line to an opposite. 
\end{lemma}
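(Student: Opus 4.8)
The map $\theta$ is an involutory oppomorphism, so its companion automorphism is the opposition relation $\sigma_0$, which fixes the types $2$ and $4$; thus $\theta$ permutes the type-$2$ vertices (the points of $\mathsf{E_{6,2}}(\K)$) among themselves and likewise the type-$4$ vertices (its lines), and the two formulations in the statement coincide. My first and cleanest route is to read the claim off the opposition diagram. By definition a symplectic polarity fixes a standard split metasymplectic space, i.e.\ a full $\mathsf{F_4}$ subbuilding, so it has a fixed simplex in every $\sigma_0$-orbit of nodes and hence fix diagram $\mathsf{^2E_{6;4}}$ (Table~\ref{AllWS}). Since a symplectic polarity is uniclass (Theorem~\ref{thm:basictheorem}), the fix--opposition duality of Table~\ref{Duality} shows that its opposition diagram is $\mathsf{E_{6;2}}$. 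In this diagram only the type-$1$ and type-$6$ nodes are distinguished, and by the definition of the opposition diagram the distinguished nodes are exactly the types of the minimal simplices sent to opposites. Hence the minimal simplices mapped to opposites are single vertices of type $1$ (equivalently $6$); in particular no vertex of type $2$ or $4$ is mapped to an opposite, which is the assertion.

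If one prefers a self-contained geometric argument (as presumably in \cite{PVM:19a}), I would argue directly in $\Gamma=\mathsf{E_{6,2}}(\K)$, treating the point case first and the line case dually. Suppose for a contradiction that some point $p$ is mapped to an opposite point $q=p^\theta$. Because $\theta$ is an involution and the equator geometry is symmetric in its poles, $\theta$ stabilises $E(p,q)$ and induces an involutory automorphism on it. As the residue of a type-$2$ vertex is $\mathsf{A}_5$, the equator geometry is isomorphic to the long root subgroup geometry $\mathsf{A}_{5,\{1,5\}}(\K)$ of $\PG(5,\K)$, on which $\theta$ therefore acts; one then uses the structure of the fixed metasymplectic space $M$ (the fixed point set of $\theta$) together with \cref{pointsymp,pointlinesymp} and the description of the mutual positions in $\Gamma$ to show that the induced action on $E(p,q)$ cannot occur, i.e.\ that $p$ and $q$ cannot in fact be opposite.

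The main obstacle is precisely this last contradiction in the geometric route: ruling out an opposite image for a type-$2$ (or type-$4$) vertex \emph{directly}, without invoking the classification, requires a genuine analysis of how $M$ meets the equator geometry and of the induced polarity on $\mathsf{A}_{5,\{1,5\}}(\K)$ -- this is exactly the content hidden inside the diagram route, where it is supplied by the uniclass theory of \cite{NPV}. I therefore expect the efficient proof to be the diagram computation of the first paragraph, with the hands-on geometric argument reserved for a fully self-contained treatment.
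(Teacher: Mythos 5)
Your first-paragraph argument is correct, but it is not the paper's proof: the paper gives no internal derivation at all and simply cites Lemma~4.2(2) of \cite{PVM:19a}, where the $\{2,4\}$-domesticity of symplectic polarities is established directly. Your route instead stays inside the present paper's framework: a symplectic polarity fixes a standard split metasymplectic space, hence has fix diagram $\mathsf{^2E_{6;4}}$ (Table~\ref{AllWS}); it is uniclass by Theorem~\ref{thm:basictheorem}; and the fix--opposition duality for uniclass automorphisms (Table~\ref{Duality}, from \cite[\S1.2]{NPV}) then forces its opposition diagram to be $\mathsf{E_{6;2}}$, whose only encircled nodes are $1$ and $6$, so no vertex of type $2$ or $4$ is sent to an opposite. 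This is legitimate and non-circular: all ingredients come from \cite{NPV} (or are restatements of it in the preliminaries), and nothing you invoke is proved later in the paper using Lemma~\ref{lem:obs}. What the paper's citation buys is independence from the uniclass machinery (the result in \cite{PVM:19a} predates \cite{NPV}); what your route buys is a deduction self-contained within the paper's own tables, at the cost of leaning on the heavier classification of \cite{NPV}. Your second, ``geometric'' paragraph is only a sketch---the key contradiction (ruling out an opposite pair of type-$2$ vertices via the induced action on the equator geometry $E(p,q)\cong\mathsf{A}_{5,\{1,5\}}(\K)$) is asserted rather than carried out---but since you explicitly present it as optional and do not rely on it, this does not affect the correctness of the proposal.
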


We now show that in any long root subgroup geometry, a point-domestic and line-domestic collineation is necessarily a (polar) $\{1,2'\}$-kangaroo. 

\begin{prop}\label{dom=kang}
If $\Gamma=(X,\cL)$ is a long root subgroup geometry, then every collineation $\theta$ of $\Gamma$ that is both point-domestic and line-domestic is a $\{1,2'\}$-kangaroo. 
\end{prop}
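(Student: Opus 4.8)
The plan is to show directly that a point-domestic and line-domestic collineation moves no point to distance~$1$ (a distinct collinear point) nor to distance~$2'$ (a special point); this is exactly the $\{1,2'\}$-kangaroo property. Recall that point-domesticity forbids any point to be opposite its image and line-domesticity forbids any line to be opposite its image. So the argument is contrapositive in spirit: assuming some point $x$ realises displacement $1$ or $2'$, I will manufacture, from the forward and backward $\theta$-orbit of $x$ together with the structural Lemmas~\ref{pointlinesymp}, \ref{specialspecial} and~\ref{pointsymp}, a point or a line mapped to an opposite, contradicting one of the two hypotheses.

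For the special case, suppose $x$ is special to $x^\theta$ and let $y$ be their unique common neighbour, so that $x\perp y\perp x^\theta$. Applying $\theta$ repeatedly and gluing yields a chain $\cdots\perp x\perp y\perp x^\theta\perp y^\theta\perp x^{\theta^2}\perp\cdots$ in which every pair $\{x^{\theta^k},x^{\theta^{k+1}}\}$ is special (being a $\theta$-image of $\{x,x^\theta\}$), while each pair $\{y^{\theta^k},y^{\theta^{k+1}}\}$ has the same mutual position as $\{y,y^\theta\}$. Applying Lemma~\ref{specialspecial} to each four-term subpath reduces the whole opposition bookkeeping of the chain to the single question of the position of $\{y,y^\theta\}$; note that $y$ and $y^\theta$ are both collinear to $x^\theta$, so they lie at distance at most~$2$. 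The plan is then a short case analysis: if $\{y,y^\theta\}$ is collinear we have produced a distance-$1$ displacement (handled below); if it is special, Lemma~\ref{specialspecial} forces opposite pairs along the chain, from which I expect to extract an opposite line (and in any case one may recurse at $y$); the fixed and symplectic subcases are pinned down using Lemmas~\ref{pointlinesymp} and~\ref{pointsymp}, which describe precisely how a symp is seen from a special or opposite point.

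For the collinear case, suppose $x\perp x^\theta$ with $x\neq x^\theta$ and let $L=xx^\theta$, contained in a symp $\xi$. A useful constraint is that no point can be collinear to two opposite points (a common neighbour would force collinearity-distance at most~$2$), which bounds how the $\theta$-orbit of $x$ can sit inside $\xi$. The plan is to compare $\xi$ with $\xi^\theta$ and apply Lemmas~\ref{pointsymp} and~\ref{pointlinesymp} to locate, inside a maximal singular subspace of $\xi$ or of the residue of $L$, a line moved to an opposite line, contradicting line-domesticity; point-domesticity is used along the way to exclude the competing possibility that some auxiliary point is opposite its image.

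The main obstacle is the translation between a \emph{small} point-displacement and the existence of a \emph{concrete} object mapped to an opposite: Lemma~\ref{specialspecial} naturally produces opposition between a point and the image of a \emph{different} point, so care is needed to convert this either into a genuine displacement-opposition of points or into an opposition of lines via the correct combinatorial opposition criterion for lines in each long root subgroup geometry. I also expect the delicate issue to be ensuring the two cases interlock without circularity — the symplectic and collinear subcases of the special analysis must feed exactly into the collinear analysis, and the latter must close on its own — so that the combined induction on the $\theta$-orbit terminates.
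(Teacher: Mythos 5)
Your proposal is a plan rather than a proof, and the concrete mechanisms it commits to do not close. Take your special case with $\{y,y^\theta\}$ special (the subcase you flag as producing the contradiction). Applying Lemma~\ref{specialspecial} to the paths $x\perp y\perp x^\theta\perp y^\theta$ and $y\perp x^\theta\perp y^\theta\perp x^{\theta^2}$ gives exactly the oppositions $x$ opposite $y^\theta$ and $y$ opposite $x^{\theta^2}$ --- oppositions between a point and the image of a \emph{different} point, which contradict neither point-domesticity nor line-domesticity. Nor can you ``extract an opposite line'': the natural candidates fail, since the line $xy$ has image $x^\theta y^\theta$ and $y\perp x^\theta$, while $yx^\theta$ has image $y^\theta x^{\theta^2}$ and $x^\theta\perp y^\theta$; a line and its image containing a collinear point pair are never opposite. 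Your fallback ``one may recurse at $y$'' is circular: $y$ special to $y^\theta$ is precisely the hypothesis you started from, with no decreasing quantity, so the induction you invoke at the end has no termination argument. The fixed and symplectic subcases, and the entire collinear case, are only gestured at (``pinned down using Lemmas~\ref{pointlinesymp} and~\ref{pointsymp}'', ``locate \dots a line moved to an opposite''), with no construction given.

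The idea you are missing --- and which is the heart of the paper's proof --- is to stop chaining forward images and instead work in the residue of the displaced point $p$, using \emph{both} directions of the orbit. By thickness (Tits, Proposition~3.30 of \cite{Tits:74}) there is a line $L$ through $p$ that is locally opposite both the line toward the forward image ($pq$, resp.\ $pr$ where $r$ is the common neighbour of $p$ and $q=p^\theta$) and the line toward the backward image ($p^{\theta^{-1}}p$, resp.\ $pr^{\theta^{-1}}$). The backward choice is what guarantees, after applying $\theta$, that $L^\theta$ is locally opposite the relevant line at $q$, so that for any $x\in L\setminus\{p\}$ both $\{x,q\}$ and $\{p,x^\theta\}$ (resp.\ both $\{x,r\}$ and $\{x^\theta,r\}$) are special pairs. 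Only then does Lemma~\ref{specialspecial} produce a genuine contradiction: in the collinear case it yields $x$ opposite $x^\theta$, violating point-domesticity; in the special case it yields $x$ opposite $q$ and $x^\theta$ opposite $p$, whence $L$ and $L^\theta$ are opposite lines, violating line-domesticity. Your forward-orbit chains cannot reproduce this two-sided local opposition, which is why every branch of your case analysis either stalls or loops.
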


\begin{proof}
Suppose that $\theta$ maps a point $p$ to a collinear point $q=p^\theta$. In $\Res_\Gamma(p)$, we can find a point opposite both points corresponding to the respective lines $pq$ and $p^{\theta^{-1}}p$ (use Proposition~3.30 of \cite{Tits:74}). To that point corresponds a line $L$ through $p$. Pick $x\in L\setminus\{p\}$ arbitrarily. Then $\{x,q\}$ and $\{p,x^\theta\}$ are special pairs. \cref{specialspecial} implies that $x$ and $x^\theta$ are opposite, a contradiction.

Now suppose that $\theta$ maps a point $p$ to a special point $q=p^\theta$, that is, $\{p,p^\theta\}$ is a special pair. Set $\{r\}=p^\perp\cap q^\perp$. Again we find a line $L$ through $p$ locally opposite both $pr$ and $pr^{\theta^{-1}}$. Choosing $x\in L\setminus\{p\}$ arbitrarily, we again find that $\{x,r\}$ and $\{x^\theta,r\}$ are special pairs. This implies that $\{x,q\}$ and $\{x^\theta,p\}$ are opposite pairs. Consequently, $L$ and $L^\theta$ are opposite lines, a contradiction.    
\end{proof}

Combining Lemma~\ref{lem:obs} and Proposition~\ref{dom=kang} achieves Task (2) for symplectic polarities of $\mathsf{E}_6$ buildings. 

\begin{remark}
Even more generally, \cref{dom=kang} holds in all so-called \emph{hexagonic geometries}. Without going into details, we mention that these are essentially the Lie incidence geometries having exactly the same distance relations between points as long root subgroup geometries. Examples are line Grassmannians of polar spaces, and the geometries of type $\mathsf{F}_{4,4}$.  
\end{remark}

In summary, combined with (the proof of) Proposition~\ref{prop:tpE6}, we have shown the following characterisation. 
Moreover, comparing with Theorem~\ref{thm:basictheorem} we have completed the proof of Theorem~\ref{thm:A} for buildings of type $\mathsf{E}_6$. 

\begin{thm}\label{E6uni}
An non-trivial automorphism $\theta$ of $\mathsf{E_{6}}(\K)$, for some field $\K$, is a polar $\{1,2'\}$-kangaroo if, and only if, it is an anisotropic duality, a symplectic polarity, or a collineation pointwise fixing an ideal Veronesean.
\end{thm}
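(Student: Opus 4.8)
The plan is to assemble the desired characterisation from the results already established in this section, separating $\theta$ according to whether it is type-preserving or a duality (note that, since $\sigma_0$ is non-trivial for $\mathsf{E}_6$, any anisotropic automorphism is necessarily a duality, so the three alternatives really are disjoint possibilities). Each direction of the equivalence then reduces to reading off the relevant case from Proposition~\ref{prop:tpE6}, Theorem~\ref{thm:En}, the capping result proved above, and the classification of domestic oppomorphisms recalled above.

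For the forward implication, suppose $\theta$ is a non-trivial polar $\{1,2'\}$-kangaroo. If $\theta$ is type-preserving, then Proposition~\ref{prop:tpE6} identifies the polar $\{1,2'\}$-kangaroo property with $\theta$ pointwise fixing an ideal Veronesean, giving the third alternative. If instead $\theta$ is a duality, then Theorem~\ref{thm:En}$(i)$ forces $\theta$ to be either anisotropic or domestic. In the domestic case the capping result shows $\theta$ is capped, so $\theta$ lies in the list of six special classes recalled above; since Classes $2$--$6$ occur only in types $\mathsf{E}_7$ and $\mathsf{E}_8$, the sole possibility in type $\mathsf{E}_6$ is Class~$1$, that is, $\theta$ is a symplectic polarity. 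Together these cases yield exactly the three stated alternatives.

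For the converse, I would verify each alternative separately. An anisotropic duality sends every point to an opposite point (distance~$3$), so distances~$1$ and~$2'$ are vacuously skipped and $\theta$ is trivially a polar $\{1,2'\}$-kangaroo. A symplectic polarity is point-domestic and line-domestic by Lemma~\ref{lem:obs}, hence a polar $\{1,2'\}$-kangaroo by Proposition~\ref{dom=kang}. Finally, a collineation pointwise fixing an ideal Veronesean is a polar $\{1,2'\}$-kangaroo directly by Proposition~\ref{prop:tpE6}. The argument is thus essentially bookkeeping over the substantive inputs (Theorem~\ref{thm:En}, the capping result, and the domestic classification), so I do not anticipate a genuine obstacle here; the only point demanding care is confirming that no domestic class beyond Class~$1$ can arise in type $\mathsf{E}_6$, which is immediate from the type restrictions attached to the class list.
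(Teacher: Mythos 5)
Your proposal is correct and takes essentially the same route as the paper, which presents this theorem precisely as a summary of the ingredients you assemble: Proposition~\ref{prop:tpE6} (via its proof) for type-preserving automorphisms, Theorem~\ref{thm:En}$(i)$ together with the capping proposition and the domestic classification (only Class~1 existing in type $\mathsf{E}_6$) for dualities, and Lemma~\ref{lem:obs} with Proposition~\ref{dom=kang} for the converse. Note only that identifying the kangaroo property with pointwise fixing an ideal Veronesean requires the proof, not merely the statement, of Proposition~\ref{prop:tpE6} -- exactly as the paper signals with ``combined with (the proof of) Proposition~\ref{prop:tpE6}.''
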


\subsection{Class 2: Fixing a metasymplectic space in $\mathsf{E_{7,1}}(\K)$}
The main property, proved in \cite{NPVV}, is the following. 
\begin{prop}[{\cite[Theorem~7.23]{NPVV}}]\label{nofixedpoints}\label{notsymplectic}
Let $\theta$ be an automorphism of the building $\mathsf{E_7}(\K)$. Then the following are equivalent.
\begin{compactenum}[$(1)$]
\item $\theta$ does not fix any chamber and has opposition diagram $\mathsf{E_{7;3}}$.
\item The fixed point structure of $\theta$ induced in $\mathsf{E_{7,1}}(\K)$ is a fully isometrically embedded metasymplectic space $\mathsf{F_{4,1}}(\K,\LL)$, for some quadratic extension $\LL$ of $\K$. 
\item The collineation induced in $\mathsf{E_{7,7}}(\K)$ has no fixed points and does not map any point to a symplectic one, but it maps at least one point to a collinear one.  
\end{compactenum}
\end{prop}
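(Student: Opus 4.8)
The plan is to prove the cycle $(1)\Rightarrow(2)\Rightarrow(3)\Rightarrow(1)$, moving back and forth between the two models $\Gamma=\mathsf{E_{7,1}}(\K)$ and $\Gamma'=\mathsf{E_{7,7}}(\K)$ by means of the dictionary recalled in \cref{e77}. Recall that the points of $\Gamma$ are the symps of $\Gamma'$, and dually the points of $\Gamma'$ are the symps of $\Gamma$; that collinearity in one model corresponds to adjacency of symps in the other; and that special pairs correspond to the prescribed configuration of disjoint symps. This dictionary is exactly what makes $(2)$ (a statement about the fixed \emph{points} of $\Gamma$) and $(3)$ (a statement about the images of \emph{points} of $\Gamma'$, i.e.\ the symps of $\Gamma$) two faces of the same coin.

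For $(1)\Rightarrow(2)$ I would first observe that the opposition diagram $\mathsf{E_{7;3}}$ records that the minimal simplices sent to opposites are precisely those of the encircled types $\{1,6,7\}$, and that this, together with the absence of a fixed chamber, forces (via the available structural results on automorphisms with prescribed opposition diagram, whose dual fix diagram $\mathsf{E_{7;4}}$ is read off from \cref{Duality,AllWS}) a fixed substructure of relative type $\mathsf{F}_4$. The task is then to identify this abstract fixed substructure concretely inside $\Gamma$: one shows the fixed points form a subspace, verifies the parapolar axioms on them to recognise a metasymplectic space, and checks that collinear, symplectic and special pairs of the subgeometry are induced from $\Gamma$ so that the embedding is fully isometric. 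The defining algebraic datum is then extracted from the local residues, and the hyperbolic ($\mathsf{D}_6$) nature of the symps of $\Gamma$ is used to see that it is a \emph{quadratic} extension $\LL/\K$.

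For $(2)\Rightarrow(3)$ I would push the fixed metasymplectic space through the dictionary. A fixed point of $\Gamma'$ is a fixed symp of $\Gamma$, and an isometrically embedded $\mathsf{F_{4,1}}(\K,\LL)$ with $\LL$ quadratic contains no entirely fixed symp, giving ``no fixed points'' in $\Gamma'$; a point of $\Gamma'$ mapped to a symplectic one would be a symp of $\Gamma$ in a forbidden mutual position with its image, excluded by the isometric embedding, giving ``no symplectic image''; finally the nonempty fixed structure forces $\theta$ to be non-anisotropic, and a local analysis near it (using \cref{pointlinesymp,specialspecial,pointsymp} to control mutual positions) produces a symp of $\Gamma$ adjacent to its image, i.e.\ a point of $\Gamma'$ mapped to a collinear one. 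For $(3)\Rightarrow(1)$ the absence of fixed points in $\Gamma'$ immediately yields that no chamber of $\mathsf{E}_7$ is fixed (a fixed chamber would contain a fixed vertex of type $7$, hence a fixed point of $\Gamma'$); the remaining distance conditions in $\Gamma'$ then pin down the domesticity type of $\theta$, and hence the opposition diagram $\mathsf{E_{7;3}}$, again controlling the mutual positions of symps by the same three lemmas.

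The main obstacle is the concrete identification inside $(1)\Rightarrow(2)$: passing from the purely combinatorial input ``simplices of types $\{1,6,7\}$ go to opposites and no chamber is fixed'' to the precise statement that the fixed structure is $\mathsf{F_{4,1}}(\K,\LL)$ with $\LL$ \emph{quadratic} over $\K$ and isometrically embedded. This requires a coordinatisation of the fixed point set and an understanding of exactly which (necessarily non-split) forms of $\mathsf{F}_4$ embed in $\mathsf{E_7}(\K)$ as long root subgroup geometries; controlling the degree of the extension, and ruling out that the subgeometry is only fully embedded but not isometric, is the delicate point. Everything else reduces to careful but essentially routine bookkeeping with the $\Gamma\leftrightarrow\Gamma'$ dictionary.
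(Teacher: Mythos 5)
The first thing to note is that the paper does not prove this proposition at all: it is imported verbatim from \cite[Theorem~7.23]{NPVV}, where its proof occupies a substantial part of that paper. So the question is whether your outline could stand as a self-contained proof, and it cannot: the step you yourself flag as ``the delicate point'' --- passing from ``opposition diagram $\mathsf{E_{7;3}}$ and no fixed chamber'' to the concrete identification of the fixed structure as a fully isometrically embedded $\mathsf{F_{4,1}}(\K,\LL)$ with $\LL$ \emph{quadratic} over $\K$ --- is not a technical refinement that can be left to ``coordinatisation''; it \emph{is} the theorem. Acknowledging the obstacle is not the same as overcoming it, so the proposal has a genuine gap at its core. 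Moreover, the structural input you invoke to get started is not available to you: the fix/opposition diagram duality of \cref{Duality} is established in \cite{NPV} only for \emph{uniclass} automorphisms, so you cannot use it to attach the fix diagram $\mathsf{E_{7;4}}$ (equivalently, a fixed substructure of relative type $\mathsf{F}_4$) to an automorphism merely assumed to satisfy $(1)$; in the context of the present paper, whose purpose is to prove that such automorphisms are uniclass, this would be circular.

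There are also gaps in the two implications you treat as routine. In $(2)\Rightarrow(3)$, a fixed point of $\Gamma'=\mathsf{E_{7,7}}(\K)$ corresponds to a symp of $\Gamma=\mathsf{E_{7,1}}(\K)$ that is stabilised \emph{setwise}; your argument that the fixed metasymplectic space ``contains no entirely fixed symp'' only addresses pointwise fixed symps, and ruling out a setwise-stabilised symp (which a priori need not meet the fixed point set at all) requires a separate argument. Similarly, in $(3)\Rightarrow(1)$, deducing the precise opposition diagram $\mathsf{E_{7;3}}$ (vertices of types $1,6,7$, and only these, mapped to opposites) from the three displacement conditions of $(3)$ is not a matter of ``pinning down the domesticity type'': one must actually prove domesticity for types $2,3,4,5$ and non-domesticity for types $1,6,7$, which in \cite{NPVV} is done by a lengthy analysis of mutual positions of points and symps. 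In short, your cycle $(1)\Rightarrow(2)\Rightarrow(3)\Rightarrow(1)$ is a reasonable organisation, but each arrow conceals work of the same order as the cited theorem itself, and the paper deliberately avoids redoing that work by citing it.
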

Concerning (2), an equivalent way of saying ``fully isometrically embedded'' is saying it is a subspace with induced geometry a metasymplectic space in which opposite points are also opposite in $\mathsf{E_{7,7}}(\K)$. The fix diagram is $\mathsf{E_{7;4}}$. The geometry $\mathsf{F_{4,1}}(\K,\LL)$ is the Lie incidence geometry naturally associated to a building of type $\mathsf{F_4}$ obtained from a building $\mathsf{E_6}(\LL)$ by Galois descent, or  to a building associated to a group of mixed type over the pair of fields $(\K,\LL)$ in characteristic $2$, where $\LL$ is an inseparable quadratic extension of $\K$ (so $\dim_{\K}\LL=2$).

Now let $\theta$ be an automorphism of $\mathsf{E_7}(\K)$ with opposition diagram $\mathsf{E_{7;3}}$ and fix diagram $\mathsf{E_{7;4}}$. In \cite{NPVV}, the action of $\theta$ on $\mathsf{E_{7,7}}(\K)$ is examined. Then \cref{nofixedpoints}(3) tells us that the displacement of the points is highly restricted. But also the displacement of the symps is highly restricted. Indeed, Lemmas~7.7 and~7.16 of \cite{NPVV} say the following. 

\begin{lemma}\label{notspecial}\label{nottoadjacent}
Let $\xi$ be an arbitrary symp of $\mathsf{E_{7,7}}(\K)$. Then either $\xi$ is fixed, or $\xi\cap\xi^\theta$ is a line, or $\xi$ is opposite $\xi^\theta$. 
\end{lemma}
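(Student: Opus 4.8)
The plan is to transport the statement into the long root subgroup geometry $\Gamma=\mathsf{E_{7,1}}(\K)$, whose points are exactly the symps of $\Gamma'=\mathsf{E_{7,7}}(\K)$. Points of $\Gamma$ have precisely the five mutual positions listed in the introduction, and \cref{pre} matches collinearity in $\Gamma$ with adjacency of symps (meeting in a maximal singular subspace) and the special position with the disjoint-unique-common-neighbour configuration; equality and oppositeness correspond on the two sides, so the two remaining positions, symplectic and opposite, must correspond to the two remaining symp-configurations, namely a line-intersection and oppositeness. Hence the asserted trichotomy is equivalent to the two exclusions ``$\xi$ is never adjacent to $\xi^\theta$'' (no point of $\Gamma$ sent to a collinear one) and ``$\xi$ is never in the special position with $\xi^\theta$'' (no point of $\Gamma$ sent to a special one) --- exactly the two statements \cref{nottoadjacent} and \cref{notspecial}. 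The whole task reduces to these.

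The tool throughout is \cref{nofixedpoints}(3): in $\Gamma'$ the collineation $\theta$ is fixed-point-free and sends no point to a symplectic one. The basic observation, established just as in the opening lines of \cref{prop:id,opp2'}, is that any point $x\in\xi\cap\xi^\theta$ is sent to a collinear point: the points $x$ and $x^{\theta^{-1}}$ both lie in the convex symp $\xi$, hence are collinear or symplectic in $\Gamma'$, and the symplectic alternative is forbidden (while $x\neq x^{\theta^{-1}}$ by fixed-point-freeness); symmetrically $x\perp x^\theta$ inside $\xi^\theta$. Thus $\theta$ acts ``collinearly'' on the singular subspace $\xi\cap\xi^\theta$, and when $\xi\cap\xi^\theta$ is a maximal singular subspace $M$, applying $\theta$ exhibits $M$ and $M^\theta$ as two maximal singular subspaces of the hyperbolic rank-$6$ symp $\xi^\theta$ with $x\perp x^\theta\in M^\theta$ for every $x\in M$. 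I would then exploit the two-family (oriflamme) structure of the hyperbolic symp --- each submaximal singular subspace lying in exactly two maximal ones --- to propagate this forced collinearity across $\xi^\theta$ and produce either a fixed point or a point whose image is symplectic, contradicting \cref{nofixedpoints}(3). This eliminates the adjacent position.

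For the special position I would argue in $\Gamma$ around the configuration $p\perp r\perp p^\theta$ arising from a special pair $\{p,p^\theta\}$ with unique common neighbour $r$, pass to the chain $p\perp r\perp p^\theta\perp r^\theta\perp p^{\theta^2}$ by applying $\theta$, and invoke \cref{specialspecial} together with \cref{pointsymp,pointlinesymp} (to control how the relevant symps meet the opposite points produced) in order to manufacture, from a point of a suitable singular subspace, a point whose image is symplectic, again contradicting \cref{nofixedpoints}(3). The main obstacle in both exclusions is precisely this final conversion: turning a forbidden \emph{symp} configuration (a too-large intersection, or the special relation) into an honest \emph{point} that is sent to a symplectic point. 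This is where the hyperbolicity of the symps is indispensable: it is the structural input that drives the connectivity-and-propagation mechanism already used in \cref{prop:id,opp2'}, and, as noted in the text, is exactly why the analogous statement is expected to fail in type $\mathsf{F}_4$.
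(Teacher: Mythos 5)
Your opening reduction is fine, but it is worth noting at the outset that the paper does not prove this lemma at all: it is quoted from Lemmas~7.7 and~7.16 of \cite{NPVV}, so your attempt must be judged on its own as a from-scratch proof. As such, it is correct only up to the point where the real work begins. The translation of the trichotomy into the two exclusions (no symp mapped to an adjacent one, no symp mapped to a disjoint non-opposite one), the identification of \cref{nofixedpoints}(3) as the available tool, and the transfer observation that every point of $\xi\cap\xi^\theta$ is mapped to a distinct collinear point (convexity of symps plus no fixed points and no symplectic images) are all sound. But both exclusions themselves are left as declarations of intent --- ``I would then exploit the two-family structure \dots to produce either a fixed point or a point whose image is symplectic'' and ``I would argue \dots in order to manufacture \dots a point whose image is symplectic'' --- and these are precisely the hard steps.

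Moreover, the mechanisms you name do not close when one tries to execute them, so this is a genuine gap and not merely compressed exposition. For the adjacent position: the propagation in \cref{prop:id} and \cref{opp2'} runs on the global hypotheses ``no point is mapped to a distinct collinear one'' (resp.\ ``to a special one''), which are exactly dual to what you have; your hypotheses (``no fixed points, no symplectic images'') are inert inside singular subspaces, since any point of a stabilised singular subspace is automatically sent to a distinct collinear point. In particular, in the case $M^\theta=M$ with $M=\xi\cap\xi^\theta$, the restriction of $\theta$ to $M$ is just a fixed-point-free collineation of $\PG(5,\K)$, which exists in abundance, so no contradiction is forced by your observation, and the oriflamme property gives no propagation of the kind used in \cref{prop:id}. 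For the special position: writing $\zeta$ for the unique symp adjacent to both $\xi$ and $\xi^\theta$ (\cref{e77}), in the case $\zeta^\theta=\zeta$ your constraints only yield that $\theta|_\zeta$ is a fixed-point-free collineation of a hyperbolic $\mathsf{D}_6$ polar space mapping every point to a collinear one --- and such collineations exist (for instance those fixing a line spread element-wise, cf.\ \cref{Bn1uni}), so again the local configuration is self-consistent and no symplectic image can be ``manufactured'' from it by \cref{specialspecial}, \cref{pointsymp} and \cref{pointlinesymp} alone. In both exclusions the contradiction must come from finer global information (the interaction with the fixed metasymplectic structure of \cref{nofixedpoints}(2), or a complete case analysis of the possible positions of $M^\theta$ and $\zeta^\theta$), which is exactly the content of the cited lemmas of \cite{NPVV} and is absent from your proposal.
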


Translated to $\mathsf{E_{7,1}}(\K)$, this lemma says that $\theta$ is a polar $\{1,2'\}$-kangaroo, as required. 



\subsection{Class 3: Fixing a dual polar space in $\mathsf{E_{7,7}}(\K)$}
Let $\theta$ be an automorphism of $\mathsf{E_7}(\K)$ with opposition diagram $\mathsf{E_{7;4}}$ and fix diagram $\mathsf{E_{7;3}}$. Then the following assertions are shown in \cite{NPVV}, where again the action of $\theta$ on $\mathsf{E_{7,7}}(\K)$ is examined (see Corollaries~6.1 and~6.11, and Lemma~6.7 of \cite{NPVV}).

\begin{lemma}\label{evendist}\label{sympfixed} Let $\theta$ be an automorphism of $\mathsf{E_{7,7}}(\K)$ with opposition diagram $\mathsf{E_{7;4}}$ and fix diagram $\mathsf{E_{7;3}}$.
\begin{compactenum}[$(1)$]\item
A point is never mapped to a collinear one, nor to an opposite one. \item
The symp determined by a point $x$ mapped onto a symplectic one, and its image $x^\theta$, is stabilised. \item No symp is mapped onto an adjacent symp. \end{compactenum}
\end{lemma}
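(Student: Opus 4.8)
The plan is to work entirely inside $\mathsf{E_{7,7}}(\K)$, which is a strong parapolar space of diameter~$3$ whose symps are hyperbolic polar spaces of type $\mathsf{D}_6$; in particular two points are either equal, collinear, symplectic (in a common symp) or opposite, with no special pairs occurring (this is what lets the three parts be phrased without mentioning distance~$2'$). The strategy is to read the ``opposite'' assertions directly off the opposition diagram $\mathsf{E_{7;4}}$, and to reduce every ``collinear/adjacent'' assertion to an incidence computation inside a single hyperbolic $\mathsf{D}_6$ symp, where the exceptional property of such symps (every submaximal singular subspace lies in exactly two maximals) combines with point-domesticity to force a contradiction. Note that part~(1) is exactly the assertion that points move an \emph{even} distance (hence the label \emph{evendist}), the even counterpart of the odd-distance behaviour recorded for Class~2 in \cref{nofixedpoints}(3); and part~(3) is the symp-level analogue of the Class~2 statement \cref{nottoadjacent}, which I would prove by the same method (reflecting that both automorphisms restrict to $\{1,2'\}$-kangaroos on the long root geometry $\mathsf{E_{7,1}}(\K)$).

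For part~(1), the statement that no point is mapped to an opposite is immediate: points have type~$7$, and $7$ is not one of the encircled nodes $\{1,3,4,6\}$ of the opposition diagram $\mathsf{E_{7;4}}$, so $\theta$ is point-domestic. The real content is that no point is mapped to a \emph{distinct collinear} point, which cannot be read from the diagrams. I would argue by contradiction: assuming $x\perp x^\theta$ with $x\neq x^\theta$, choose a symp $\xi$ containing the line $xx^\theta$ (every line lies in a symp). If $\xi^\theta=\xi$, then $\theta$ induces a collineation of the hyperbolic polar space $\xi$ moving $x$ to a collinear point; reading off the induced $\mathsf{D}$-type diagram on $\xi$ (an entry of \cref{FixDiagrams}) and applying \cref{prop:NPVV1} or \cref{Bn1uni} shows this induced collineation fixes an ideal subspace or a line spread elementwise, neither of which moves a point to a distinct collinear point, a contradiction. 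If $\xi^\theta\neq\xi$, then $\xi$ and $\xi^\theta$ share the point $x^\theta$, and I would use \cref{pointsymp} together with point-domesticity to locate points of $\xi$ opposite points of $\xi^\theta$, contradicting the opposition diagram.

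For part~(2), strong parapolarity gives a \emph{unique} symp $\xi$ through the symplectic pair $\{x,x^\theta\}$, so that $\xi^\theta$ is the unique symp through $\{x^\theta,x^{\theta^2}\}$; note $x^\theta$ is not fixed (otherwise $x^{\theta^2}=x^\theta$ forces $x=x^\theta$ by injectivity), so by part~(1) the pair $\{x^\theta,x^{\theta^2}\}$ is symplectic and $\xi^\theta$ is well defined, with $x^\theta\in\xi\cap\xi^\theta$. To conclude $\xi=\xi^\theta$ I would rule out a proper intersection: if $\xi\cap\xi^\theta$ were a proper singular subspace, then tracking the $\theta$-images of the points of $\xi$ (each of even displacement, by part~(1)) against the $\mathsf{D}_6$-geometry of $\xi^\theta$ would produce a point moved to a collinear one, contradicting part~(1).

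Finally, part~(3) I would deduce from parts~(1) and~(2) by the method of \cref{nottoadjacent}: if a symp $\zeta$ were mapped to an adjacent symp $\zeta^\theta$, meeting it in a maximal singular subspace $M$, then examining the displacements of the points of $M$ and of $\zeta$ via parts~(1)--(2), together with point-domesticity, would exhibit a point moved to a collinear point, or a flag moved to an opposite excluded by $\mathsf{E_{7;4}}$. The main obstacle throughout is the control of the \emph{non-opposite, non-fixed} displacements --- the collinear case of part~(1) and the adjacent case of part~(3) --- since these are invisible to the fix and opposition diagrams; this is precisely where the hyperbolic nature of the $\mathsf{D}_6$ symps is indispensable (the same feature highlighted after \cref{opp2'}, and the reason the analogous statement is expected to fail for type $\mathsf{F}_4$).
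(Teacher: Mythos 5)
Your proposal has genuine gaps at every step that carries real content, and it is worth noting at the outset that the paper itself does not prove this lemma internally: it is imported wholesale from \cite{NPVV} (Corollaries~6.1 and~6.11 and Lemma~6.7 of that paper), so what you are attempting is to compress a substantial part of Section~6 of an external paper into a sketch. The only part of your argument that is complete is the easy half of~(1): since type~$7$ is not encircled in $\mathsf{E_{7;4}}$, the collineation is point-domestic --- correct, and this is indeed how one reads it off. The collinear half of~(1), however, is circular as you have set it up. In the case $\xi^\theta=\xi$ you propose to apply \cref{prop:NPVV1} (or \cref{Bn1uni}) to the collineation induced on the stabilised symp; but every one of the equivalent conditions in \cref{prop:NPVV1} is precisely the kind of statement you are trying to establish (maps no point to a distinct collinear point; is a polar $\{1,2'\}$-kangaroo; fixes an ideal subspace). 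You cannot invoke the equivalence without first verifying one of its sides for the induced collineation, and your proposed mechanism --- ``reading off the induced $\mathsf{D}$-type diagram on $\xi$'' --- does not exist: the fix and opposition diagrams of $\theta$ record which types of simplices are fixed or sent to opposites \emph{somewhere in the building}, and there is no result quoted (or available) in this paper that converts this into the fix or opposition diagram of the restriction of $\theta$ to one particular stabilised symp. In the case $\xi^\theta\neq\xi$, producing ``points of $\xi$ opposite points of $\xi^\theta$'' contradicts nothing; what you need is a point $p$ with $p^\theta$ opposite $p$, and your sketch never constructs one.

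Parts~(2) and~(3) are not proofs but statements of intent. ``Tracking the $\theta$-images of the points of $\xi$ \dots{} would produce a point moved to a collinear one'' and ``examining the displacements \dots{} would exhibit a point moved to a collinear point, or a flag moved to an opposite'' contain no argument: no point is chosen, no incidence is computed, no contradiction is exhibited. These are exactly the statements whose proofs in \cite{NPVV} require the detailed structure theory of automorphisms with opposition diagram $\mathsf{E_{7;4}}$ (including control of the action on lines and on quads inside symps), and nothing in your outline substitutes for that work. Your framing of the geometry is sound --- $\mathsf{E_{7,7}}(\K)$ is indeed a strong parapolar space of diameter~$3$ with hyperbolic $\mathsf{D}_6$ symps and no special pairs of points, and the deduction in~(2) that $\{x^\theta,x^{\theta^2}\}$ is again symplectic is fine --- but the sound framing is where the proposal ends and the hand-waving begins.
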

Lermma~6.9 of \cite{NPVV}  and its proof in \cite{NPVV} imply the following. 
\begin{lemma}\label{residver}\label{sympfixeduni}Let $\theta$ be an automorphism of $\mathsf{E_{7,7}}(\K)$ with opposition diagram $\mathsf{E_{7;4}}$ and fix diagram $\mathsf{E_{7;3}}$.
\begin{compactenum}[$(1)$] \item The collineation induced by $\theta$ in the residue of any fixed point, pointwise fixes an ideal 
Veronesean. 
\item The collineation induced by $\theta$ in a fixed symp pointwise fixes an ideal subspace which is a polar space of rank $2$. \item The fix diagram of $\theta$ is $\mathsf{E_{7;3}}$. \end{compactenum}
\end{lemma}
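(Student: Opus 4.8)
The plan is to deduce the three assertions by restricting $\theta$ to residues and invoking the characterisations already obtained for types $\mathsf{E}_6$ and $\mathsf{D}_n$. Two facts will be used throughout: as an oppomorphism of $\mathsf{E}_7$, where the opposition relation is trivial, $\theta$ is type preserving; and by \cref{evendist} no point of $\mathsf{E_{7,7}}(\K)$ is sent to a collinear or opposite point, no symp is sent to an adjacent one, and the symp spanned by a point sent to a symplectic point together with its image is stabilised.

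For part $(1)$, let $x$ be a fixed point, i.e.\ a fixed vertex of type $7$. Then $\theta$ stabilises $\Res(x)$, a building of type $\mathsf{E}_6$, and induces on it a type preserving collineation $\bar\theta$. The decisive step is to verify that $\bar\theta$ is a polar $\{1,2'\}$-kangaroo of the associated $\mathsf{E}_6$ geometry: a point that $\bar\theta$ mapped to a collinear or special point would, after lifting through $x$, produce either a point of $\mathsf{E_{7,7}}(\K)$ mapped to a collinear point or a local configuration excluded by \cref{evendist} (with \cref{specialspecial} and \cref{pointsymp} the natural tools for transporting special pairs between the two levels). Granting this, \cref{prop:tpE6} applies directly: the fixed structure of a type preserving collineation of $\mathsf{E}_6(\K)$ is a naturally embedded Veronesean, and it is an \emph{ideal} Veronesean precisely because $\bar\theta$ is a polar $\{1,2'\}$-kangaroo; this proves $(1)$.

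For part $(2)$, let $\xi$ be a fixed symp. As a symp of $\mathsf{E_{7,7}}(\K)$ it is the residue of a fixed vertex of type $1$, hence a hyperbolic polar space of rank $6$ (type $\mathsf{D}_6$), and $\theta$ induces a collineation $\theta_\xi$ of it. By the same descent principle, a point of $\xi$ sent by $\theta_\xi$ to a collinear or special point would contradict \cref{evendist}, so $\theta_\xi$ is a polar $\{1,2'\}$-kangaroo of a hyperbolic polar space. Provided $\theta_\xi$ fixes a point of $\xi$ (which I would obtain from the fixed subbuilding meeting $\xi$ nontrivially, ruling out the fixed line-spread alternative of \cref{Bn1uni}), \cref{prop:NPVV1} shows that the fixed point set of $\theta_\xi$ is a nonempty ideal subspace; identifying it with the residue at the corresponding vertex of the fixed subbuilding, whose relative type is $\mathsf{B}_3$, then pins its rank down to $2$.

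Part $(3)$ is an assembly step. Parts $(1)$ and $(2)$ describe the local fixed structure at the two extremal node types, and together with the displacement restrictions of \cref{evendist} they show that minimal fixed simplices occur precisely in the orbits encircled in $\mathsf{E_{7;3}}$, so that the fixed subbuilding has relative type $\mathsf{B}_3$ and the fix diagram is $\mathsf{E_{7;3}}$. I expect the main obstacle to lie in the two descent arguments---verifying that $\bar\theta$ and $\theta_\xi$ genuinely inherit the $\{1,2'\}$-kangaroo property under passage to a residue---together with the exact rank determination for the fixed ideal subspace of a fixed symp; once these are settled, reading off the fix diagram is routine.
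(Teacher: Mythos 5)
Your plan is necessarily a different route from the paper, because the paper does not prove this lemma at all: it is imported wholesale from \cite{NPVV} (``Lemma~6.9 of \cite{NPVV} and its proof imply the following''). So the attempt must be judged as a new, self-contained argument, and as such it has two genuine gaps, both located exactly where the real mathematical content lies. The first is the pair of descent claims, which you explicitly defer (``I would obtain\ldots'', ``the natural tools\ldots''). For part (1) this is not a routine verification: the points of the long root geometry $\mathsf{E}_{6,2}$ of $\Res(x)$ are the type-$2$ vertices of $\mathsf{E}_7$ through $x$, whereas \cref{evendist} only restricts the displacements of points and symps of $\mathsf{E_{7,7}}(\K)$ (types $7$ and $1$). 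Translating ``collinear or special in the residue geometry'' into a configuration of points and symps that \cref{evendist} forbids is precisely the kind of work that occupies Section~6 of \cite{NPVV}; \cref{specialspecial} and \cref{pointsymp} by themselves do not supply it.

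The second gap is more serious: part (2) as you argue it is circular. You rule out the anisotropic and line-spread alternatives of \cref{Bn1uni}/\cref{prop:NPVV1}, and you pin the rank of the fixed ideal subspace down to $2$, by invoking ``the fixed subbuilding\ldots whose relative type is $\mathsf{B}_3$''. No such subbuilding is available at this point. The hypothesis only provides the fix diagram $\mathsf{E_{7;3}}$, which records the \emph{types} ($1$, $6$, $7$) of the minimal fixed simplices and says nothing about the global structure of the fixed-point set; the fixed structure of an automorphism is in general not a building. That the fixed structure here is a building of type $\mathsf{B}_3$ (an ideal dual polar Veronesean) is exactly what this lemma is used to establish, and it cannot be pulled from \cref{thm:basictheorem} or Table~\ref{AllWS} either, since those concern uniclass automorphisms and the uniclass property of $\theta$ is the conclusion of this section of the paper, not a hypothesis. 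Without an independent proof that every fixed symp contains a fixed point and that its fixed points form a polar space of rank exactly $2$ (rather than $1$ or $3$), parts (2) and (3) remain open. (A side remark: part (3) as stated in this paper is literally contained in the hypothesis---an artifact of importing the statement from \cite{NPVV}---so no ``assembly step'' is needed or meaningful.)
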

A substructure with the properties $(1)$ and $(2)$ of \cref{residver}, that is, a substructure consisting of a set $\cP$ of points, a set $\cL$ of lines and a set $\cS$ of symps of $\mathsf{E_{7,7}}(\K)$ such that \begin{compactenum}[$(i)$] \item the set of lines and symps in $\cL$ and $\cS$, respectively, incident with a point $p\in\cP$ defines an ideal Veronesean in the residue of $p$; \item the set of points and lines in $\cP$ and $\cL$, respectively, incident with a symp $\xi\in\cS$ defines an ideal subspace which is a polar space of rank $2$ in $\xi$,\end{compactenum} will be called an \emph{ideal dual polar Veronesean}.

\cref{evendist}(3) already shows that $\theta$ is a polar $\{1\}$-kangaroo. We now show that it is also a polar $\{2'\}$-kangaroo.

\begin{prop}
An automorphism $\theta$ of $\mathsf{E_7}(\K)$ with opposition diagram $\mathsf{E_{7;4}}$ and fix diagram $\mathsf{E_{7;3}}$ is a polar $\{1,2'\}$-kangaroo.
\end{prop}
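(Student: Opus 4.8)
Since \cref{evendist}(3) already yields the polar $\{1\}$-kangaroo property, the only thing left is the polar $\{2'\}$-kangaroo property, and the plan is to prove it by working in $\Gamma'=\mathsf{E_{7,7}}(\K)$ rather than in $\Gamma=\mathsf{E_{7,1}}(\K)$, so as to have the strong point-displacement constraints of \cref{evendist} at our disposal. Under the dictionary between $\Gamma$ and $\Gamma'$ recalled in \cref{e77}, a point of $\Gamma$ lying at distance $2'$ from its image corresponds to a symp $\xi$ of $\Gamma'$ for which $\xi$ and $\xi^\theta$ are disjoint and admit a \emph{unique} common symp $\zeta$ meeting each of them in a maximal singular subspace. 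Thus it suffices to show that no symp of $\Gamma'$ is sent into such a position. (The more direct route through \cref{specialspecial}, used in \cref{dom=kang}, is not available here: that argument ends by exhibiting a line mapped to an opposite line, which in the present situation is no contradiction, since $\theta$ is not line-domestic on $\Gamma$ -- the line type of $\Gamma$ is encircled in the opposition diagram $\mathsf{E_{7;4}}$.)

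Assume for contradiction that such a symp $\xi$ exists, and set $M=\zeta\cap\xi$ and $N=\zeta\cap\xi^\theta$, two maximal singular subspaces with $M\cap N=\varnothing$. First I would use $\theta$-equivariance: since the connecting symp is unique, $\zeta^\theta$ is the unique symp joining $\xi^\theta$ and $\xi^{\theta^2}$, and it meets $\xi^\theta$ in $M^\theta$. Hence the hyperbolic symp $\xi^\theta$ carries the two maximal singular subspaces $N=\zeta\cap\xi^\theta$ and $M^\theta=\zeta^\theta\cap\xi^\theta$. The heart of the argument is to read off the mutual position of $\zeta$ and $\zeta^\theta$ from the way $N$ and $M^\theta$ sit inside $\xi^\theta$: whenever they meet, $\zeta$ and $\zeta^\theta$ are not disjoint and hence lie at distance at most $2$ in $\Gamma$, and the aim is to force them to be \emph{adjacent}, contradicting \cref{evendist}(3).

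To make this precise I would descend to points and invoke \cref{evendist}(1) and~(2). For a point $x\in M$ we have $x\in\zeta\cap\xi$ and $x^\theta\in M^\theta\subseteq\xi^\theta$; since $\xi\cap\xi^\theta=\varnothing$, \cref{evendist}(1) forces $\{x,x^\theta\}$ to be symplectic or special. In the symplectic case \cref{evendist}(2) supplies a \emph{fixed} symp $\langle x,x^\theta\rangle$ incident with both $\xi$ and $\xi^\theta$; letting $x$ range over $M$ produces a whole family of such fixed symps, which one can confront with the very rigid fixed structure (the ideal dual polar Veronesean of \cref{residver}) and with the hyperbolic geometry of $\zeta$. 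The cleanest outcome to aim for is a single point of $M$ whose image is collinear with it, in direct contradiction with \cref{evendist}(1); the hyperbolic (type $\mathsf{D}_6$) nature of the symps -- emphasised before \cref{prop:id} -- is what makes this possible, since it fixes the dimensions of the intersections $M\cap N$, $M^\theta\cap N$ and of the collinearity pairing between disjoint maximal subspaces, and guarantees that every submaximal subspace lies in exactly two maximal ones.

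The main obstacle is exactly this last combinatorial step: controlling, inside the hyperbolic symps $\xi^\theta$ and $\zeta$, the relative positions of $M$, $N$, $M^\theta$ and $N^\theta$, and checking that the configuration dictated by the uniqueness of $\zeta$ is a genuine adjacency (so that \cref{evendist}(3) applies) rather than a harmless meeting in a line (the symplectic position). Locating a point of $M$ that is sent to a collinear point is the crispest way to finish, and pinning it down is where the uniqueness of the connecting symp and the $\mathsf{D}_6$ structure have to be used together.
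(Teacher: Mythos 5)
Your setup coincides with the paper's: you pass to $\mathsf{E_{7,7}}(\K)$, use the dictionary of \cref{e77} to reduce the polar $\{2'\}$-kangaroo property to showing that no symp $\xi$ is mapped to a disjoint symp admitting a unique connecting symp $\zeta$, and you correctly assemble \cref{evendist} and \cref{residver} as the tools. But the proposal is not a proof: the step you yourself call ``the main obstacle'' is precisely the content of the proposition, and it is left open. Moreover, the two contradictions you aim for cannot be forced. Write $U=\zeta\cap\xi$ and $U'=\zeta\cap\xi^\theta$. In the case $U^\theta=U'$, every $x\in U$ has both $x$ and $x^\theta$ inside the symp $\zeta$, so (since $\xi\cap\xi^\theta=\varnothing$ and collinearity is excluded by \cref{evendist}(1)) the pair $\{x,x^\theta\}$ is symplectic and the symp it determines is $\zeta$ itself; by \cref{evendist}(2), $\zeta$ is then \emph{fixed}. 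Hence $\zeta$ and $\zeta^\theta$ are equal, not adjacent --- no contradiction with \cref{evendist}(3) is available --- and the points of $M=U$ are sent to symplectic, not collinear, points --- no contradiction with \cref{evendist}(1) either. The configuration you assume for contradiction does not produce an adjacency or a collinearity; it produces a fixed point.

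That is the missing idea, and it is how the paper concludes. Once a fixed symp meeting $\xi$ in a maximal singular subspace is exhibited, \cref{residver}(2) says that the fixed points inside that symp form an ideal subspace which is a polar space of rank $2$; such an ideal subspace meets every maximal singular subspace of the symp, so $\xi$ contains a fixed point, which then lies in $\xi\cap\xi^\theta=\varnothing$ --- the desired contradiction. Concretely the paper argues in two cases: if $U^\theta=U'$, the fixed symp is $\zeta$ itself, as above; if $U^\theta\neq U'$, then some point of $U'$ is the image of a point $y\in\xi\setminus U$, the pair $\{y,y^\theta\}$ is again symplectic (it has more than one common neighbour in $U$, ruling out a special pair), so by \cref{evendist}(2) its symp $\zeta'$ is fixed, and $\zeta'\cap\xi$ is a maximal singular subspace $U_y$, to which the same argument applies. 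Your equivariance observation involving $\zeta^\theta$ and $\xi^{\theta^2}$ plays no role. What was needed was to push your own remark --- that \cref{evendist}(2) ``supplies a fixed symp'' --- one step further by feeding it into \cref{residver}(2), planting a fixed point inside the supposedly disjoint pair $\{\xi,\xi^\theta\}$.
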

\begin{proof}
It suffices to show that $\theta$ does not map a symp $\xi$ of $\mathsf{E_{7,7}}(\K)$
to a disjoint non-opposite symp. So, suppose for a contradiction that $\xi$ and $\xi^\theta$ are disjoint and not opposite. Then, by the last paragraph of \cref{e77}, there is a unique symp $\zeta$ intersecting $\xi$ in a maximal singular subspace $U$ and $\xi^\theta$ in a maximal singular subspace $U'$. Suppose $U^\theta=U'$. Let $x\in U$ be arbitrary. By \cref{evendist}(1), the point $x^\theta$ is symplectic to $x$ and so the symp determined by $x$ and $x^\theta$ is $\zeta$. Then \cref{evendist}(2) asserts that $\zeta$ is fixed, and \cref{residver}(2) implies that $U$ contains a fixed point, contradicting $\xi\cap\xi^\theta=\varnothing$. 

Hence there is some point of $U'$ which is the image of a point $y\in\xi\setminus U$.
Again, $y$ and $y^\theta$ are symplectic and determine a unique symp $\zeta'$, fixed under $\theta$. Clearly, $\zeta'\cap \xi$ is a maximal singular subspace $U_y$. As in the previous paragraph, $U_y$ contains a fixed point, leading to the same contradiction as before.

This proves the proposition. 
\end{proof}

\subsection{Class 4: Fixing a metasymplectic space in $\mathsf{E_{8,8}}(\K)$}
The main property, proved in \cite[Theorem~A]{PVMexc4}, Theorem~4.1, is the following.
\begin{prop}\label{domfixE8}
Let $\theta$ be an automorphism of the building $\mathsf{E_8}(\K)$. Then the following are equivalent.
\begin{compactenum}[$(1)$]
\item $\theta$ has opposition diagram $\mathsf{E_{8;4}}$ and the same fix diagram.
\item The fixed point structure of $\theta$ induced in $\mathsf{E_{8,8}}(\K)$ is a fully isometrically embedded metasymplectic space $\mathsf{F_{4,1}}(\K,\HH)$, for some quaternion division algebra $\HH$ of $\K$, or an inseparable field extension $\HH$ of degree $4$ of $\K$. 
\item  $\theta$ fixes a full subgeometry of $\mathsf{E_{8,8}}(\K)$ with fix diagram  $\mathsf{E_{8;4}}$.
\end{compactenum}
\end{prop}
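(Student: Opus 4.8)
Since the Coxeter diagram $\mathsf{E}_8$ has no nontrivial automorphism, the opposition relation $\sigma_0$ and the companion automorphism $\sigma$ are both trivial, so $\theta$ is automatically a type-preserving collineation; in particular the sets of types encircled in the opposition and fix diagrams are honest (unbent) subsets of the type set. The plan is to prove the three statements equivalent through the cycle $(1)\Rightarrow(2)\Rightarrow(3)\Rightarrow(1)$, with the bulk of the work lying in $(1)\Rightarrow(2)$.

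The two implications $(2)\Rightarrow(3)$ and $(3)\Rightarrow(1)$ are essentially bookkeeping with the index dictionary of Subsection~\ref{e77}. For $(2)\Rightarrow(3)$, if the fixed point structure is a fully isometrically embedded $\mathsf{F_{4,1}}(\K,\HH)$, then the fixed building has relative Coxeter type $\mathsf{F}_4$ (Table~\ref{AllWS}); matching its four types of fixed vertices against the $\mathsf{E}_8$ diagram gives the distinguished set $\{1,6,7,8\}$, that is, the fix diagram $\mathsf{E_{8;4}}$, and this subgeometry is visibly full. For $(3)\Rightarrow(1)$, a fixed full subgeometry whose fixed building has relative type $\mathsf{F}_4$ is large enough to force $\theta$ to be domestic (a chamber mapped onto an opposite cannot coexist with so large a fixed structure); granting domesticity, the precise set of types sent to opposites is then determined directly by the same symp-connectivity and Observation-$(*)$ analysis used in Proposition~\ref{opp2'}, yielding the opposition diagram $\mathsf{E_{8;4}}$. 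Note that here one \emph{cannot} appeal to the fix/opposition duality of Table~\ref{Duality}, since the automorphisms of Class~4 are not uniclass; the coincidence of the two diagrams must be established geometrically.

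The substance is $(1)\Rightarrow(2)$. Here $\theta$ is domestic with minimal opposite-mapped simplices of types among $\{1,6,7,8\}$, and I would analyse its fixed structure in $\mathsf{E_{8,8}}(\K)$ directly. First I would show that the fixed points form a subspace and determine the fixed substructure locally: Proposition~\ref{prop:id} forbids $\theta$ from fixing a whole symp together with one of its maximal singular subspaces (unless $\theta=\id$), which---together with point-residue arguments in the spirit of Lemma~\ref{residver}---should force the fixed lines and symps through a fixed point to trace out an ideal Veronesean in each point residue and an ideal rank-$2$ polar subspace in each fixed symp. Propagating this local data along the connected adjacency graph on symps (adjacency meaning intersection in a maximal singular subspace) then assembles a globally defined subgeometry of relative type $\mathsf{F}_4$, that is, a metasymplectic space.

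The main obstacle is the algebraic identification of the defining parameter $\HH$ and the verification that the embedding is isometric (that points opposite in $\mathsf{F_{4,1}}(\K,\HH)$ remain opposite in $\mathsf{E_{8,8}}(\K)$). The anisotropic kernel of the fixed $\mathsf{F}_4$-building must be pinned down as being governed either by a quaternion division algebra $\HH$ or, in characteristic $2$, by an inseparable degree-$4$ extension, and the two cases have to be separated using the internal form data of the fixed symps. This is exactly the step where the hyperbolic ($\mathsf{D}$-type) nature of the symps of $\mathsf{E_{8,8}}(\K)$ is indispensable, as flagged in the remark following Theorem~\ref{thm:En}; and it is genuinely harder than the corresponding $\mathsf{E}_6,\mathsf{E}_7$ arguments, because $\mathsf{E}_8$ admits no outer forms, so the field-and-algebra data cannot be read off from a Galois action on the diagram but must be extracted purely from the geometry of the symps.
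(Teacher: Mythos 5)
There is nothing in this paper for your attempt to be compared against: the paper does not prove Proposition~\ref{domfixE8} at all. It is imported from the companion paper \cite{PVMexc4}, where it appears as Theorem~A and is proved as Theorem~4.1; its role here is purely as an input, and the only thing the present paper adds afterwards is a citation of Lemmas~4.4 and~4.5 of \cite{PVMexc4} to conclude that such a $\theta$ is a polar $\{1,2'\}$-kangaroo. So what you set out to reconstruct blind is not an argument of this paper but the content of a separate paper in the series, and your sketch has to be judged on its own merits.

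On those merits there are genuine gaps, beyond the (honestly flagged) deferral of the identification of $\HH$. The decisive one is circularity in how you use this paper's tools. Proposition~\ref{prop:id} carries the hypothesis that $\theta$ maps no point of $\mathsf{E_{8,8}}(\K)$ to a distinct collinear one, i.e.\ that $\theta$ is already a polar $1$-kangaroo; in your implication $(1)\Rightarrow(2)$ nothing of the sort is available --- deducing precisely this spectral-gap property for automorphisms satisfying $(1)$/$(2)$ is what the paper does \emph{after} Proposition~\ref{domfixE8}, by quoting Lemmas~4.4 and~4.5 of \cite{PVMexc4}, not before it. The same objection applies to your step $(3)\Rightarrow(1)$: Proposition~\ref{opp2'} assumes that no point is mapped to distance $2'$, which is again unknown at that stage, so the ``same Observation-$(*)$ analysis'' cannot be run. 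Your domesticity claim in $(3)\Rightarrow(1)$ (``a chamber mapped onto an opposite cannot coexist with so large a fixed structure'') is asserted rather than proved, and it is a substantive displacement-type statement, not bookkeeping. Finally, the mathematical core of $(1)\Rightarrow(2)$ --- assembling a globally defined metasymplectic space from local residue data, verifying that the embedding is full and isometric, and pinning down $\HH$ as a quaternion division algebra or an inseparable quartic extension in characteristic~$2$ --- is deferred with ``should force'' and ``must be pinned down''; that is where essentially all the difficulty lies, and it occupies the bulk of \cite{PVMexc4}.
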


The metasymplectic space $\mathsf{F_{4,1}}(\K,\HH)$ is the Lie incidence geometry naturally associated with a building of type $\mathsf{F_4}$ arising from $\mathsf{E_7}(\LL)$, with $\LL$ a separable quadratic extension of $\K$ contained in the quaternion algebra $\HH$ over $\K$, by Galois descent, or it is a building associated to a group of mixed type over the pair of fields $(\K,\HH)$ in characteristic $2$, where $\HH$ is an inseparable extension of $\K$ and $\dim_{\K}\HH=4$.

Now, Lemmas~4.4 and~4.5 of \cite{PVMexc4} precisely state that $\theta$ as in \cref{domfixE8} is a polar $\{1,2'\}$-kangaroo.

\subsection{Classes 5 and 6: Pointwise fixing an equator geometry in $\mathsf{E_{7,1}}(\K)$ or $\mathsf{E_{8,8}}(\K)$}
There are two remaining classes of domestic automorphisms of buildings of types $\mathsf{E_7}$ and $\mathsf{E_8}$ which do not fix any chamber. They have no fix diagram as they fix vertices of each type. We must show that these automorphisms are not $\{1,2'\}$-kangaroos. 

We treat them together, as they can be uniformly described in an elegant way using equator geometries. 

%
%


\begin{prop}
Let $\theta$ be a collineation of a long root subgroup geometry $\Gamma$ pointwise fixing an equator geometry $E(p,q)$ and acting fixed point freely on its set $\cI$ of poles. Then there exist points of $\Gamma$ mapped onto collinear or special ones.
\end{prop}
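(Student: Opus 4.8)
The plan is to argue by contradiction: suppose $\theta$ maps no point of $\Gamma$ to a collinear one (distance~$1$) nor to a special one (distance~$2'$), that is, suppose $\theta$ \emph{is} a polar $\{1,2'\}$-kangaroo, and deduce that $\theta$ must fix a pole, contradicting the hypothesis that $\theta$ acts fixed point freely on $\cI$. Fix a pole $p\in\cI$; then $p^\theta\in\cI$ with $p^\theta\neq p$, and since $\theta$ fixes the equator $E=E(p,q)$ pointwise and sends poles to poles, $p^\theta$ is again a pole of the same equator $E$. In particular $p^\theta$, like $p$, is symplectic to every (fixed) point of $E$.

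The first step is to determine the mutual position of the two distinct poles $p$ and $p^\theta$. If they were collinear or special we would be done immediately, so the kangaroo assumption forces them to be symplectic or opposite; using the description of equators and their poles recalled in \cref{e77}, I would verify that distinct poles of a common equator are in fact opposite, so that we may take $E=E(p,p^\theta)$ and work with $p$ opposite $p^\theta$ (should a symplectic pair of poles occur, the same mechanism below applies inside the symp they span). Next I would set up the local rigidity: choose an equatorial point $z\in E$, let $\xi$ be the symp through the symplectic pair $p,z$, and apply \cref{pointsymp}. Since $p\in\xi$ is opposite $p^\theta$, the point $z$ is the unique point of $\xi$ symplectic to $p^\theta$, the points of $\xi$ collinear to $z$ but distinct from it are special to $p^\theta$, and the remaining points of $\xi$ are opposite $p^\theta$; because $\theta$ fixes $z$ and all of $E$, these classes are controlled after applying $\theta$.

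I would then pass to stabilised symps. Because $\theta$ fixes $E$ pointwise and $E$ is isometrically embedded, a symp $\zeta$ meeting $E$ in a symp of the equator geometry is stabilised by $\theta$: its pointwise fixed subspace $\zeta\cap E$ is too large to be shared with a second symp. Each such $\zeta$ is a \emph{hyperbolic} polar space, and $\theta$ restricts to a collineation of $\zeta$ with nonempty fixed set $\zeta\cap E$, hence not anisotropic. Under the kangaroo assumption $\theta|_\zeta$ maps no point of $\zeta$ to a distinct collinear point, so \cref{prop:NPVV1} (equivalently \cref{Bn1uni}) forces the fixed point set of $\theta|_\zeta$ to be an ideal subspace of $\zeta$. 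Running this over all stabilised symps meeting $E$ and invoking the connectivity of the symp graph exactly as in \cref{prop:id}, the pointwise fixed structure propagates until it must contain a pole; equivalently, the rigidity from \cref{pointsymp} together with \cref{specialspecial} shows that the only way to avoid a special pair $\{x,x^\theta\}$ with $x\perp p$ is for $p$ itself to be fixed. This contradicts fixed point freeness on~$\cI$, so some point of $\Gamma$ is after all mapped to a collinear or special one.

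The main obstacle is precisely the passage from the restricted action on fixed symps back to an honest point of $\Gamma$ moved to a collinear or special one, and, relatedly, the exclusion of the borderline situation in which a candidate point $x$ with $x\perp p$ has image $x^\theta$ merely symplectic to $x$ (or is itself a spurious fixed point outside $E$). Overcoming this rests on the stabilisation of symps meeting $E$ and on the hyperbolic nature of the symps of $\mathsf{E}_{7,1}(\K)$ and $\mathsf{E}_{8,8}(\K)$ — the same feature already exploited in \cref{prop:id} and \cref{opp2'} — which is what makes the polar-space dichotomy of \cref{prop:NPVV1} available and ultimately forces a fixed pole.
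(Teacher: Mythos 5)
Your proposal has a genuine gap at its crux. The setup (renaming $p^\theta$ as a pole $q$ opposite $p$, applying \cref{pointsymp} in a symp through $p$ and a fixed equatorial point, noting that symps meeting $E(p,q)$ in a symplectic pair of fixed points are stabilised) is sound and partially parallels the paper. But your concluding step --- ``the pointwise fixed structure propagates until it must contain a pole; equivalently \dots the only way to avoid a special pair $\{x,x^\theta\}$ with $x\perp p$ is for $p$ itself to be fixed'' --- is asserted, not proved, and the machinery you invoke cannot deliver it. The propagation in \cref{prop:id} works because the stabilised symps there share \emph{pointwise fixed maximal singular subspaces}, and pointwise fixedness then spreads through the symp graph. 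In your situation, \cref{prop:NPVV1} only gives that the fixed set of $\theta$ restricted to each stabilised symp is a nonempty ideal subspace; nothing in the connectivity argument forces these fixed sets to grow, let alone to reach a pole. Indeed, the genuine Class 5 and 6 automorphisms stabilise all these symps, fix large substructures inside them, and still fix no pole, so the contradiction can only come from applying the kangaroo hypothesis to a \emph{concrete} point whose image you can locate --- and your argument never locates the image of any point or line.

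That concrete identification is exactly what the paper's (direct, non-contradiction) proof supplies: with $q=p^\theta$, take any line $L$ through $p$ and any symp $\xi\supseteq L$; since the unique point $x\in\xi\cap E(p,q)$ is fixed, $\xi^\theta$ is the unique symp $\zeta$ through $q$ and $x$. The unique point $u\in L$ special to $q$ is collinear to a unique line $M$ of $\zeta$ through $x$ by \cref{pointlinesymp}, and taking $w\in M$ the unique point collinear to $q$, the chain $p\perp u\perp w\perp q$ together with \cref{specialspecial} shows $w$ does not depend on the choice of $\xi$; hence $L^\theta=qw$, so $u^\theta\in qw$, and every point of $qw$ is collinear or special to $u$. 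This pins down $u$ as the required point, with no appeal to classification results. Two secondary weaknesses of your route: it leans on \cref{prop:NPVV1}, hence on the symps being hyperbolic of rank at least $3$, so at best it covers $\mathsf{E}_{7,1}(\K)$ and $\mathsf{E}_{8,8}(\K)$, whereas the proposition is stated (and proved in the paper) for arbitrary long root subgroup geometries using only \cref{pointlinesymp}, \cref{specialspecial} and \cref{pointsymp}; and the fact that distinct poles of a common equator are opposite is used but only hedged, not established.
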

\begin{proof}
Without loss of generality we may suppose that $p^\theta=q$. Let $L$ be any line through $p$, and let $\xi$ be any symp through $L$. Let $\zeta$ be the unique symp through $q$ intersecting $\xi$ in some point $x\in E(p,q)$. Since $x$ is fixed, we note $\xi^\theta=\zeta$. On $L$ there is a unique point $u$ special to $q$; it is the unique point of $L$ collinear to $x$. It follows from \cref{pointlinesymp} that $u$ is collinear to the points of a unique line $M$ of $\zeta$ through $x$. Let $w$ be the unique point of $M$ collinear to $q$. Then $p\perp u\perp w\perp q$. Since $u$ and $q$ are special (by \cref{specialspecial}), the point $w$ is independent of the choice of $\xi$. It follows that the line $qw$ is contained in the image of every symp through $L$. Hence $L^\theta=qw$, implying $u^\theta\in qw$. But each point on $qw$ is either collinear or special to $u$, proving the assertion. 
\end{proof}

In summary, we have shown the following theorem.

\begin{thm}
A non-trivial automorphism $\theta$ of $\mathsf{E_7}(\K)$ or $\mathsf{E_8}(\K)$ is a polar $\{1,2'\}$-kangaroo if, and only if, exactly one of the following holds. 
\begin{compactenum}[$(1)$]
\item $\theta$ is anisotropic.
\item $\theta$ is domestic, its fix structure is a Weyl substructure and it has fix diagram one of $\mathsf{E_{7;3}}$, $\mathsf{E_{7;4}}$, or $\mathsf{E_{8;4}}$.
\end{compactenum} 
Consequently, $\theta$ is a polar $\{1,2'\}$-kangaroo if, and only if, it is uniclass.
\end{thm}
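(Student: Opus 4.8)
The plan is to assemble the results established across the preceding subsections, so that the proof of this summarising theorem reduces to bookkeeping. For the ``only if'' implication, suppose that $\theta$ is a non-trivial polar $\{1,2'\}$-kangaroo. By Theorem~\ref{thm:En}(ii) it is either anisotropic---which is case~$(1)$---or else domestic and fixing no chamber. In the latter case I would first invoke the capping result proved above (polar $\{1,2'\}$-kangaroo oppomorphisms are capped), which removes the uncapped alternative, and then feed $\theta$ into the classification of domestic automorphisms of $\mathsf{E_7}(\K)$ and $\mathsf{E_8}(\K)$; this places $\theta$ in one of Classes~2--6.

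To pin down case~$(2)$ I would exclude Classes~5 and~6 using the proposition of the final subsection, which shows that a collineation pointwise fixing an equator geometry and acting fixed-point-freely on its poles maps some point to a collinear or special one, and so cannot be a $\{1,2'\}$-kangaroo. This leaves Classes~2,~3 and~4, whose fixed structures are the Weyl substructures carrying the fix diagrams $\mathsf{E_{7;4}}$, $\mathsf{E_{7;3}}$ and $\mathsf{E_{8;4}}$ respectively (see Table~\ref{AllWS}), establishing case~$(2)$.

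For the converse, if $\theta$ is anisotropic then every point is mapped to an opposite point (distance~$3$), so no point lands at distance~$1$ or~$2'$ and $\theta$ is a $\{1,2'\}$-kangaroo. If instead $\theta$ is as in case~$(2)$, then the kangaroo property is exactly what was verified in the corresponding subsections: for Class~2 via Lemma~\ref{notspecial}, for Class~3 via the proposition proved there, and for Class~4 via Lemmas~4.4 and~4.5 of \cite{PVMexc4}. That exactly one of $(1)$ and $(2)$ can hold is immediate, since for a non-trivial $\theta$ anisotropy sends every chamber to an opposite whereas domesticity sends none, and a building has chambers.

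Finally, the ``consequently'' clause is obtained by matching this list against Theorem~\ref{thm:basictheorem}: anisotropic automorphisms are uniclass, while Classes~2,~3 and~4 are precisely the non-anisotropic uniclass automorphisms of types $\mathsf{E_7}$ and $\mathsf{E_8}$ recorded there. Since the substantive work---the kangaroo verifications for the surviving classes and the elimination of the equator-fixing classes---has already been done in prior subsections, the present argument is purely organisational; the one point demanding care is to confirm that the domestic classification invoked is complete, so that Classes~2--6 genuinely exhaust the domestic chamber-free oppomorphisms of these types.
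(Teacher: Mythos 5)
Your proposal is correct and follows exactly the route the paper intends: the paper states this theorem with ``In summary, we have shown the following theorem,'' i.e.\ its proof is precisely the assembly you describe (Theorem~\ref{thm:En} plus cappedness, the domestic classification giving Classes~2--6, the equator-geometry proposition eliminating Classes~5 and~6, the subsection-by-subsection kangaroo verifications for Classes~2--4, and the comparison with Theorem~\ref{thm:basictheorem} for the uniclass clause). Note only that the paper's sentence ``classes 1, 2, 3, and 5 are uniclass, while those in classes 4 and 6 are not'' contains a typo (it should read classes 1--4 versus 5--6), and your reading resolves it the correct way.
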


The proof of Theorem~\ref{thm:A} is now complete. 

\subsection{Counterexamples}\label{sec:counter}

We now provide examples illustrating that Theorem~\ref{thm:A} does not hold for non-simply laced spherical buildings. 

\begin{example}
Let $\Delta$ be a split building of type $\mathsf{F}_4$ over a field $\mathbb{K}$. If $\mathsf{char}\,\mathbb{K}\neq 2$ then the homologies $\theta=h_{\alpha}(-1)$ (with $\alpha$ any short root, and using standard notation in the Chevalley group; see for example \cite[Section~1.1]{PVMexc}) are $\{1,2'\}$-kangaroos that do not fix a Weyl substructure. Indeed, $\theta$ is a $\{1,2',3\}$-kangaroo (that is, each point of $\mathsf{F}_{4,1}(\mathbb{K})$ is either fixed, or mapped to distance~$2$). The proof of this fact can be extracted from the proof of \cite[Lemma~4.8]{PVMexc} where we the full displacement spectra of $\theta$ is obtained,  but it also follows directly from \cref{dom=kang} noting that the opposition diagram of $\theta$ is $\mathsf{F_{4;1}^4}$. Note that $\theta$ fixes a non-thick building with thick frame of type $\mathsf{B}_4$ (by \cite[Theorem~4.9]{PVMexc}), which is not a Weyl substructure. 

If $\mathsf{char}\,\mathbb{K}=2$ then there is no canonical choice of short and long roots, however once one class is declared long and the other short, the short root elations are $\{1,2'\}$-kangaroos (indeed, they are $\{1,2',3\}$-kangaroos) that do not fix a Weyl substructure. For the calculation of the displacement spectra of these automorphisms, see \cite[Theorem~2.1]{PVMexc}. 
\end{example}

\begin{example}
A split building $\Delta$ of type $\mathsf{B}_n$ is equivalent to a \emph{parabolic quadric} in some projective space of dimension $2n$, that is, a nondegenerate quadric of maximal Witt index $n$ in $\PG(2n,\K)$, for some field $\K$. The polar type corresponds to the lines, just like the case of type $\mathsf{D}_n$. The standard equation of such a quadric is, with respect to a suitable basis,
\[X_{-1}X_1+X_{-2}X_2+\cdots+X_{-n}X_n=X_0^2.\]
If $\kar\,\K\neq 2$, then the linear map defined on the coordinates by $X_i\mapsto X_i$, $i\in\{-n,-n+1,\ldots,-1,1,2,\ldots,n\}$ and $X_0\mapsto -X_0$ clearly induces an automorphism of    $\Delta$. But since the fix structure is hyperbolic quadric of Witt index $n$, it contains full chambers and hence cannot be a Weyl substructure. 

If $\kar\,\K=2$, then we consider a \emph{central elation}, that is, a collineation induced on $\Delta$ conjugate to the map defined on the coordinates by \[\begin{cases} X_1\mapsto X_1+X_{-1},\\ X_0\mapsto X_0+X_{-1},\\ X_{i}\mapsto X_i, & i\in\{-n,-n+1,\ldots,-1,2,3,\ldots,n\}.\end{cases}\]  
A central elation also fixes chambers and hence their fix structure is never a Weyl substructure. 

Now we claim that in both previous cases, lines are either fixed, or mapped to distance $2$ (that is, a line $L$ and its image $L'$ either coincide, or $L\cap L'$ is a point and $L$ and $L'$ are not contained in a common plane). Indeed, in both cases the set of fixed points is a \emph{geometric hyperplane}, that is, a set of points intersecting each line $K$ in either $K$ itself, or in a unique point. The claim then follows from \cite[Lemma 3.5.1]{Lam-Mal:23}.

Hence, in both cases, the collineation is a $\{1,2',3\}$-kangaroo. 
\end{example}

In view of these examples, it is tempting to conjecture that in the general case distinct from $\mathsf{C}_n$, a collineation preserving the polar type is uniclass if, and only if, it is a polar $\{1,2'\}$-kangaroo which is not a polar $3$-kangaroo.


\section{Another application of \cref{opp2'}}\label{sec:4}
We conclude with a characterisation of the root elations as polar kangaroos. The proof in the exceptional cases uses \cref{opp2'}. The polar kangaroo that we consider is the polar $\{2,2'\}$-kangaroo. In the exceptional cases, it really characterises root elations, whereas in the classical cases some other examples turn up. In fact, we can even allow all polar spaces (hence all buildings of types $\mathsf{B}_n$) and obtain a complete classification. Rather surprisingly, we can also include the case of buildings of type $\mathsf{F}_4$. 

\subsection{The exceptional simply laced cases}  
\begin{prop}\label{prop:exceptional22'}
A polar $\{2,2'\}$-kangaroo $\theta$ of $\mathsf{E_6}(\K)$, $\mathsf{E_7}(\K)$ or $\mathsf{E_8}(\K)$ is either anisotropic or a (central) root elation. 
\end{prop}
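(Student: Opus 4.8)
The plan is to show that the point displacement of a polar $\{2,2'\}$-kangaroo $\theta$ is forced to be either identically $3$ (giving an anisotropic automorphism, via \cref{opp2'}) or everywhere in $\{0,1\}$ (giving a central root elation). Throughout I assume $\theta\neq\id$ and $\theta$ not anisotropic, aiming to conclude that $\theta$ is a central root elation. Since $\theta$ skips distances $2$ and $2'$, every point $x$ satisfies $\dist(x,x^\theta)\in\{0,1,3\}$, so it suffices to prove that the set $X_{\le 1}$ of points with $\dist(x,x^\theta)\le 1$ and the set $X_3$ of points with $\dist(x,x^\theta)=3$ cannot both be nonempty, and then to identify the two pure alternatives.

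The first and most delicate step is an \emph{interface lemma}: no two collinear points can lie one in $X_{\le 1}$ and the other in $X_3$. Suppose $x\perp y$ with $x\in X_{\le 1}$ and $y\in X_3$. If $x=x^\theta$, then $x=x^\theta\perp y^\theta$, so $x$ is a common neighbour of the opposite pair $\{y,y^\theta\}$, which is impossible as opposite points are at mutual distance $3$. If instead $x\ne x^\theta$ with $x\perp x^\theta$, then the same obstruction shows $x^\theta\not\perp y$ (else $x^\theta$ is a common neighbour of $y$ and $y^\theta$), so $\dist(y,x^\theta)\in\{2,2'\}$; a short analysis of the line $xy$ and its image $x^\theta y^\theta$, using \cref{specialspecial} and \cref{pointsymp} to locate a point of $xy$ whose image is symplectic or special to it, produces a forbidden displacement and hence a contradiction. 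Since the collinearity graph is connected, the interface lemma forces $X_{\le 1}=\varnothing$ or $X_3=\varnothing$.

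If $X_{\le 1}=\varnothing$, then every point is mapped to an opposite point, and I would promote this to opposition of a symp and of a maximal singular subspace contained in it: for any maximal singular subspace $U$ and any symp $\xi\supseteq U$, pointwise opposition of $U$ and of $\xi$ yields that $U$ and $\xi$ are themselves mapped to opposites. Feeding this into \cref{opp2'} shows $\theta$ is anisotropic, contradicting our standing assumption. In types $\mathsf{E}_7$ and $\mathsf{E}_8$ every automorphism is an oppomorphism, so \cref{opp2'} applies directly; in type $\mathsf{E}_6$ the anisotropic alternative can only occur for dualities (companion $\sigma_0$), which are exactly the oppomorphisms covered by \cref{opp2'}, whereas for a type-preserving collineation this branch must be excluded separately (a type-preserving $\mathsf{E}_6$ collineation cannot be anisotropic, as the longest element acts non-trivially on the $\mathsf{E}_6$ diagram), consistent with the treatment of type-preserving $\mathsf{E}_6$ automorphisms underlying \cref{prop:tpE6}. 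Thus this branch is excluded and we are left with $X_3=\varnothing$.

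It remains to treat $X_3=\varnothing$, i.e.\ $\theta$ maps every point to distance $0$ or $1$; in particular $\theta$ is point-domestic. Here I would identify $\theta$ with a central root elation by analysing its fixed structure: the fixed points form a subspace which one checks is a geometric hyperplane, and the displacement directions $x\mapsto x^\theta$ on $X_{\le 1}\setminus\Fix(\theta)$ converge to a common centre, matching the displacement data against the classification of domestic automorphisms of $\mathsf{E}_n(\K)$ used elsewhere in the paper \cite{NPVV,PVMexc,PVMexc4,Mal:12}, whose members with point displacement contained in $\{0,1\}$ are precisely the central long root elations. I expect the main obstacle to be the second case of the interface lemma (a point $x\in X_{\le1}$ with $x\perp x^\theta\ne x$ adjacent to a point of $X_3$), where the contradiction must be wrung out of the local symp configuration via \cref{pointlinesymp} and \cref{specialspecial} rather than from an immediate common-neighbour obstruction; once the dichotomy is secured, the anisotropic branch is routine given \cref{opp2'}, and the final identification of the elation branch is a matter of fitting the displacement data to the known list.
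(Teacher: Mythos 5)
Your proof has a fatal flaw at its very foundation: the ``interface lemma'' is false, and it is refuted precisely by the central root elations that the proposition is trying to capture. Let $\theta$ be a central long root elation with centre $c$. Then $\theta$ fixes every point at distance $\le 2$ from $c$, maps every point special to $c$ to a distinct collinear point, and maps every point opposite $c$ to an opposite point. Now take $y$ opposite $c$ and a geodesic $c\perp a\perp b\perp y$; by \cref{specialspecial}, the pair $\{c,b\}$ is special, so $b$ has displacement $1$ (it lies in your $X_{\le 1}$), while $b\perp y$ and $y$ has displacement $3$ (it lies in your $X_3$). So collinear pairs straddling $X_{\le 1}$ and $X_3$ do exist for non-anisotropic $\{2,2'\}$-kangaroos, and the ``delicate second case'' that you hoped to ``wring out'' via \cref{pointlinesymp} and \cref{specialspecial} admits no contradiction at all: the configuration $x\perp x^\theta\ne x$ with $x$ adjacent to $y\in X_3$ is exactly what happens when $x$ is special to the centre and $y$ is opposite it. Consequently the connectivity argument collapses, and with it your whole dichotomy: a central root elation has displacement spectrum $\{0,1,3\}$, not ``all in $\{0,1\}$'' or ``all $3$''. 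Your final identification step is wrong for the same reason -- the automorphisms with point displacement contained in $\{0,1\}$ are not the central root elations (a central root elation moves points opposite its centre to opposite points), so matching against that class would never produce the elations.

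The dichotomy that actually works, and which the paper uses, lives one level up, on symps rather than points. After disposing of the type-preserving $\mathsf{E}_6$ case by citation, one splits according to whether $\theta$ maps some symp to an opposite symp. If no symp goes to an opposite (symp-domesticity), the classification of opposition diagrams forces the diagram $\mathsf{E}_{7;1}$ or $\mathsf{E}_{8;1}$ (or triviality), and the known classification of such automorphisms yields a central long root elation -- note that this branch tolerates mixed point displacements $\{0,1,3\}$, which is exactly what your point-level dichotomy cannot accommodate. If some symp $\xi$ is mapped to an opposite symp, then for each $p\in\xi$ there is a point of $\xi^\theta$ opposite $p$, so \cref{pointsymp} forces $p^\theta$ to be symplectic, special or opposite to $p$; the $\{2,2'\}$-kangaroo hypothesis leaves only ``opposite'', whence all points, and then all maximal singular subspaces, of $\xi$ go to opposites, and \cref{opp2'} gives anisotropy. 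Your first branch ($X_{\le 1}=\varnothing$ implies anisotropic via \cref{opp2'}) is sound in outline, but it is the only salvageable piece; the rest of the argument needs to be rebuilt around the symp-level dichotomy.
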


\begin{proof}
By \cite[Main Result~2]{NV}, a type preserving polar $\{2,2'\}$-kangaroo of $\mathsf{E_6}(\K)$ is a (central) elation. Hence from now on we may assume that $\theta$ is an oppomorphism. We consider the action of $\theta$ on the corresponding long root subgroup geometry. 
 
First suppose that $\theta$ is symp-domestic, that is, $\theta$ does not map any symp of the long root subgroup geometry onto an opposite one. Then Theorem~1 of \cite{PVM:19a} and Corollary~3 of \cite{PVM:19b} imply that either $\theta$ is trivial, or the opposition diagram of $\theta$ is one of

\[
%
\sE_{7;1}=\begin{tikzpicture}[scale=0.5,baseline=-0.5ex]
\node at (0,0.8) {};
\node at (0,-0.8) {};
\node [inner sep=0.8pt,outer sep=0.8pt] at (-2,0) (1) {$\bullet$};
\node [inner sep=0.8pt,outer sep=0.8pt] at (-1,0) (3) {$\bullet$};
\node [inner sep=0.8pt,outer sep=0.8pt] at (0,0) (4) {$\bullet$};
\node [inner sep=0.8pt,outer sep=0.8pt] at (1,0) (5) {$\bullet$};
\node [inner sep=0.8pt,outer sep=0.8pt] at (2,0) (6) {$\bullet$};
\node [inner sep=0.8pt,outer sep=0.8pt] at (3,0) (7) {$\bullet$};
\node [inner sep=0.8pt,outer sep=0.8pt] at (0,-1) (2) {$\bullet$};
\draw (-2,0)--(3,0);
\draw (0,0)--(0,-1);
\draw [line width=0.5pt,line cap=round,rounded corners] (1.north west)  rectangle (1.south east);
\end{tikzpicture} \hspace{.5cm} \mbox{or}\hspace{.5cm}
\sE_{8;1}=\begin{tikzpicture}[scale=0.5,baseline=-0.5ex]
\node at (0,0.8) {};
\node at (0,-0.8) {};
\node [inner sep=0.8pt,outer sep=0.8pt] at (-2,0) (1) {$\bullet$};
\node [inner sep=0.8pt,outer sep=0.8pt] at (-1,0) (3) {$\bullet$};
\node [inner sep=0.8pt,outer sep=0.8pt] at (0,0) (4) {$\bullet$};
\node [inner sep=0.8pt,outer sep=0.8pt] at (1,0) (5) {$\bullet$};
\node [inner sep=0.8pt,outer sep=0.8pt] at (2,0) (6) {$\bullet$};
\node [inner sep=0.8pt,outer sep=0.8pt] at (3,0) (7) {$\bullet$};
\node [inner sep=0.8pt,outer sep=0.8pt] at (4,0) (8) {$\bullet$};
\node [inner sep=0.8pt,outer sep=0.8pt] at (0,-1) (2) {$\bullet$};
\draw (-2,0)--(4,0);
\draw (0,0)--(0,-1);
\draw [line width=0.5pt,line cap=round,rounded corners] (8.north west)  rectangle (8.south east);
\phantom{\draw [line width=0.5pt,line cap=round,rounded corners] (1.north west)  rectangle (1.south east);}
\end{tikzpicture}.\]
Now Theorem 1 of \cite{PVMexc} implies that $\theta$ is a central long root elation. 

Secondly, suppose that some symp $\xi$ is mapped onto an opposite symp $\xi^\theta$. Let $p\in\xi$ be arbitrary. Since $\xi^\theta$ is opposite $\xi$, there exists some point of $\xi^\theta$ opposite $p$. By \cref{pointsymp}, the point $p^\theta$ is either symplectic to $p$, special to $p$, or opposite $p$. Since we assume that $\theta$ is a polar $\{2,2'\}$-kangaroo, the only possibility is that $p^\theta$ is opposite $p$. Hence $\theta$ maps each point of $\xi$ to an opposite point. It follows that every (maximal) singular subspace of $\xi$ is mapped onto an opposite. Now \cref{opp2'} implies that $\theta$ is anisotropic.   
\end{proof}

In the classical cases, we will classify polar $\{2,2'\}$-kangaroos without the parapolar terminology. We start with projective spaces.
\subsection{Projective spaces}\label{prosp} Let $\PG(n,\K)$ be a projective space of dimension $n\geq 2$ over the skew field $\K$.  We note that a two pairs $\{p,H\}$ and $\{p',H'\}$, where $p,p'$ are points and $H,H'$ are hyperplanes, are at distance $2$ or $2'$ in the long root subgroup geometry if, and only if, $H\neq H'$, $p\neq p'$ and $p\in H'$ or $p'\in H$. It follows that a collineation $\theta$ is a polar $\{2,2'\}$-kangaroo if, and only if, $\theta$ fixes every point in the intersection of a hyperplane and its image, as soon as these are distinct, and, dually, $\theta$ stabilises every hyperplane through each point $p$ and $p^\theta$, as soon as $p\neq p^\theta$. It follows easily that in such a case the line $pp^\theta$ is stabilised.

A \emph{Baer subplane} $B$ of $\PG(2,\K)$ is a subplane with the properties that each line of $\PG(2,\K)$ contains at least one point of $B$ and, dually, each point of $\PG(2,\K)$ is contained in at least one line of $B$.   A \emph{Baer collineation} $\theta$ of $\PG(2,\K)$ is a collineation whose fix structure is a Baer subplane. Typically, $B$ is a subplane isomorphic to $\PG(2,\FF)$, with $\FF$ a subfield of $\K$ over which $\K$ has dimension $2$, and after appropriately introducing coordinates,  $\theta$ acts on each coordinate as a skew field automorphism of $\K$ with fix subfield $\FF$. In case $\K$ is commutative, $\theta$ is always an involution. 

\begin{prop}
A non-trivial polar $\{2,2'\}$-kangaroo collineation $\theta$ of $\PG(n,\K)$, $n\geq 2$, is  a central collineation (elation or homology), or $n=2$ and $\theta$ is a Baer collineation.
\end{prop}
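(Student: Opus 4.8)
The plan is to work throughout with the reformulation of the polar $\{2,2'\}$-kangaroo condition recorded just above the statement: $\theta$ is such a kangaroo precisely when, for every non-fixed hyperplane $H$, the codimension-$2$ subspace $H\cap H^\theta$ is pointwise fixed, and dually, for every non-fixed point $p$ every hyperplane through $p$ and $p^\theta$ is $\theta$-stable; in particular the \emph{displacement line} $L_p:=pp^\theta$ is $\theta$-stable and every hyperplane containing $L_p$ is $\theta$-stable. The engine of the whole argument will be the subspace $C:=\bigcap_{H}H$, the intersection of all $\theta$-stable hyperplanes. Since $\theta$ is non-trivial there is a non-fixed point $p$, and all hyperplanes through $L_p$ are stable (so at least one stable hyperplane exists); intersecting over this subpencil gives $C\subseteq L_p$. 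Thus $C$ is contained in a line, and $\dim C\le 1$. I also record that $\theta$ permutes the displacement lines, since $(L_p)^\theta=p^\theta p^{\theta^2}=L_{p^\theta}$ with $p^\theta$ again non-fixed.

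The main case split is on $\dim C$. If $\dim C=0$, say $C=\{c\}$, then $c$ is fixed (it is the intersection of the $\theta$-stable hyperplanes, which $\theta$ permutes) and $C\subseteq L_p$ forces $c\in L_p$ for every non-fixed $p$; hence every line through $c$ is $\theta$-stable (if it carries a non-fixed point $p$ it equals $L_p$, otherwise it joins two fixed points and is stable), so $c$ is a \emph{center}. A collineation of $\PG(n,\K)$, $n\ge 2$, possessing a center is a perspectivity (this is classical: centrality implies axiality over a skew field), i.e.\ an elation or a homology. If $\dim C=1$, then $C=L_p$ for every non-fixed $p$, so all non-fixed points lie on the single line $C$ and every point off $C$ is fixed; a hyperplane meeting $C$ in a single point is then fixed pointwise (it has all but one of its points fixed), so $\theta$ is axial and again a central collineation. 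This disposes of the case $C\neq\varnothing$.

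It therefore remains to treat $C=\varnothing$. I first claim this forces two displacement lines $L_p,L_q$ to be skew. Indeed, if every two displacement lines met, then, by the classical trichotomy for pairwise-intersecting lines, the family $\{L_p\}$ would be either concurrent, giving a center and hence a central collineation as above, or coplanar in some plane $\pi$; in the coplanar case all non-fixed points lie in $\pi$, so if $\pi$ is proper (which happens whenever $n\ge3$) every point off $\pi$ is fixed and $\theta$ is again axial, hence central, contradicting $C=\varnothing$. So a skew pair $L_p,L_q$ exists, which already requires $n\ge3$. The key step is then to rule this out: for any point $r\in L_q$ one chooses a hyperplane $H\supseteq\langle L_p,r\rangle$ with $H\cap L_q=\{r\}$ (possible since $L_p,L_q$ are skew and $n\ge3$); as $H$ contains $L_p$ it is $\theta$-stable, and $r=H\cap L_q$ is the intersection of two $\theta$-stable subspaces in a single point, hence fixed. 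Letting $r$ range over $L_q$ shows $L_q$ is pointwise fixed, contradicting that $q\in L_q$ is non-fixed. Consequently $C=\varnothing$ is impossible for $n\ge3$, and every case with $n\ge3$ yields a central collineation.

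Finally, for $n=2$ with $C=\varnothing$, all lines meet automatically, so the displacement lines are not concurrent; I will show the fixed substructure is a Baer subplane. The set of fixed points and $\theta$-stable lines is closed under the operations of the plane (the join of two fixed points is a stable line and the meet of two stable lines is a fixed point), so it is a subplane $B$, proper because $\theta\neq\id$. The kangaroo conditions give the Baer incidence properties directly: every non-fixed line $H$ contains the fixed point $H\cap H^\theta$, and with at least two stable lines present every stable line also carries a fixed point, so every line of $\PG(2,\K)$ meets $B$, and dually every point lies on a line of $B$. Hence $B$ is a Baer subplane and $\theta$ is a Baer collineation, completing the classification. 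I expect the principal obstacle to lie in this last regime, namely in making the two borderline analyses airtight: verifying that the skew-displacement-line configuration genuinely cannot occur for $n\ge3$, and confirming that in the plane the fixed substructure is a full (non-degenerate) Baer subplane rather than a degenerate sub-configuration; the remaining reductions to a center or an axis are comparatively routine once $C$ is introduced.
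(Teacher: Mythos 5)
Your invariant $C=\bigcap\{H:\ H \text{ a $\theta$-stable hyperplane}\}$ is an attractive idea, and the cases $\dim C=0$, $\dim C=1$, as well as the elimination of a skew pair of displacement lines, are correct. But the case $C=\varnothing$ contains a genuine error: you implicitly assume that a central collineation always has $C\neq\varnothing$, and this fails for homologies. Indeed, let $\theta$ be a homology with centre $c$ and axis $H$ (so $c\notin H$; these exist whenever $|\K|>2$). Then $\theta$ is a non-trivial polar $\{2,2'\}$-kangaroo (for a non-fixed point $p$ every hyperplane through $pp^\theta\ni c$ is stable, and for a non-stable hyperplane $H'$ one has $H'\cap H'^\theta=H'\cap H$, which is pointwise fixed); its stable hyperplanes are exactly $H$ and the hyperplanes through $c$, so $C=H\cap\{c\}=\varnothing$; and its displacement lines are exactly the lines through $c$, hence pairwise intersecting and concurrent. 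Consequently your claim that $C=\varnothing$ forces a skew pair of displacement lines is false, and so is your conclusion that ``$C=\varnothing$ is impossible for $n\geq 3$''. For $n\geq 3$ this is repairable by reorganisation: in the concurrent sub-case you must conclude that $\theta$ is a central collineation as a legitimate \emph{outcome}, not as a contradiction (and you must also check that the point of concurrency is fixed, which you never do; if it were not fixed, all displacement lines would coincide, which is quickly seen to be absurd). With that repair, your treatment of $n\geq3$ goes through.

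The case $n=2$, however, is genuinely broken. Since planar homologies have $C=\varnothing$ with concurrent displacement lines, your inference ``$C=\varnothing$, hence the displacement lines are not concurrent'' is wrong, and your subsequent argument purports to show that the fixed structure is a Baer subplane -- which is false for a homology, whose fixed structure is the degenerate configuration $\{c\}\cup H$. Independently of this, the construction of the subplane $B$ is incomplete: closure of the fixed points and stable lines under join and meet does \emph{not} make $B$ a subplane, precisely because such a closed configuration can be degenerate (again the homology configuration $\{c\}\cup H$ is closed under join and meet). The missing non-degeneracy (a fixed quadrilateral) is the real content of the planar case, and it is exactly what the paper avoids reproving: since every displacement line $pp^\theta$ is stable, the paper invokes \cite[Proposition~3.3]{PVMclass} to conclude that $\theta$ is a central collineation or a Baer collineation. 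You flagged this non-degeneracy issue yourself as the ``principal obstacle'', and as it stands it is an unfilled gap rather than a routine verification; by contrast, the paper's $n\geq3$ argument (spanning a pointwise-fixed hyperplane from two distinct pointwise-fixed codimension-$2$ intersections) bypasses the invariant $C$ altogether and never needs to exclude any configuration that homologies actually realise.
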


\begin{proof}
We may assume that $\theta$ is non-trivial. First suppose $n=2$. Then, since the line $pp^\theta$ is fixed as soon as $p\neq p^\theta$, \cite[Proposition~3.3]{PVMclass} implies that $\theta$ is either a central collineation or a Baer collineation. If $n\geq 3$, then the intersection $J$ of a non-fixed hyperplane with its image contains a line and is pointwise fixed. We select $x\in J$ and $y\notin J$. If all hyperplanes through $y$ not containing $x$ are stabilised, then all hyperplanes through $y$ are stabilised and $\theta$ is a central collineation. Hence we may assume that some hyperplane $H$ throiugh $y$ not through $x$ is stabilised. Then $H\cap H^\theta$ is fixed pointwise, but is diatinct from $J$. Consequently the subspace spanned by $J$ and $H\cap H^\theta$ is pointwise fixed, implying that $\theta$ is either the identity, or pointwise fixes a hyperplane. We again conclude that $\theta$ is a central collineation. 
\end{proof}

\begin{prop}
A  polar $\{2,2'\}$-kangaroo duality $\theta$ of $\PG(n,\K)$, $n\geq 2$, is anisotropic.  
\end{prop}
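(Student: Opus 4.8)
The plan is to follow the template of the two preceding propositions, tracking the mutual position of an incident point--hyperplane flag $(p,H)$ and its image under the duality~$\theta$. Since $\theta$ interchanges the two types of the polar node, it sends the flag $(p,H)$ to the flag $(H^\theta,p^\theta)$, where $H^\theta$ is a point and $p^\theta$ is a hyperplane, incident because $p\in H$ and $\theta$ reverses incidence. According to the dictionary recorded in \cref{prosp}, the pair $\{(p,H),(p,H)^\theta\}$ is at distance $2$ or $2'$ precisely when $p\neq H^\theta$, $H\neq p^\theta$, and at least one of the conditions
\[
\text{(A)}\ \ p\in p^\theta\qquad\text{and}\qquad\text{(B)}\ \ H^\theta\in H
\]
holds; here (A) says that $p$ is an \emph{absolute point} and (B) that $H$ is an \emph{absolute hyperplane} of~$\theta$. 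A direct check refines this: the flag is opposite exactly when neither (A) nor (B) holds, it is mapped to distance $2'$ exactly when exactly one of (A),(B) holds, and it is mapped to distance $0$, $1$ or~$2$ when both hold (the distance being $\leq 1$ iff $p=H^\theta$ or $H=p^\theta$, and $2$ otherwise).

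With this dictionary in hand, I would use the two kangaroo hypotheses separately. Being a polar $2'$-kangaroo means distance~$2'$ never occurs, which by the refinement above is equivalent to the biconditional: for every incident flag $(p,H)$, condition (A) holds if and only if condition (B) holds. Since (A) depends only on $p$ and (B) only on $H$, this immediately yields that every hyperplane through an absolute point is absolute, and every point of an absolute hyperplane is absolute.

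The crux is then a dichotomy argument. I would assume, for contradiction, that $\theta$ possesses at least one absolute point $p_0$. Then every hyperplane through $p_0$ is absolute, every point of such a hyperplane is absolute, and since $n\geq 2$ these hyperplanes cover $\PG(n,\K)$; hence every point, and so (by the biconditional) every hyperplane, is absolute. Now the $2$-kangaroo hypothesis enters: as (A) and (B) hold for every flag, every flag lies at distance $0$, $1$ or~$2$ from its image, and forbidding distance~$2$ forces distance $\leq 1$ throughout. But a flag $(p,H)$ is at distance $\leq 1$ from $(H^\theta,p^\theta)$ iff $H\in\{p^\theta,p^{\theta^{-1}}\}$, so the (at least three, since $n\geq 2$ and every line has at least three points) hyperplanes through $p_0$ would all have to lie in a two-element set---a contradiction.

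Consequently $\theta$ has no absolute point: condition (A) fails for every point, whence by the biconditional (B) fails for every flag, and every flag is mapped to an opposite. Thus $\theta$ is anisotropic. I expect the main obstacle to be purely bookkeeping: pinning down exactly which mutual positions correspond to (A) and (B) and verifying the refinement, after which the $2'$-hypothesis produces the biconditional and the $2$-hypothesis eliminates the ``everything absolute'' degeneracy via the pencil count.
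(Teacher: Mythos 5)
Your proof is correct, and it reaches the conclusion by a genuinely different route than the paper. The paper's proof is purely local: assuming an absolute point $p$ (so $p\in p^\theta=:H$), it splits into the two cases $H^\theta=p$ and $H^\theta\neq p$, and in each case exhibits a single explicit incident flag $(q,H)$, with $q$ a suitable point of $H$, whose image is forced to be symplectic or special to it, contradicting the $\{2,2'\}$-hypothesis in one stroke. You instead separate the roles of the two forbidden distances: the $2'$-condition alone yields the biconditional ``$p$ absolute $\Leftrightarrow$ $H$ absolute'' over incident flags, which propagates one absolute point into \emph{all} points and hyperplanes being absolute; only then does the $2$-condition enter, through the pencil count of hyperplanes through $p_0$ against the two-element set $\{p_0^\theta,p_0^{\theta^{-1}}\}$, to eliminate this degenerate configuration. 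Your route is a bit longer but more structural: it isolates the dichotomy that a $2'$-kangaroo duality has either no absolute points or only absolute points, which makes transparent exactly why the distance-$2$ prohibition is indispensable (symplectic polarities have every point absolute, are $\{1,2',3\}$-kangaroos, and are not anisotropic), and it mirrors the propagation argument the paper itself employs when classifying $\{1,2'\}$-kangaroo dualities of projective spaces in Section~\ref{sec:proofs}. Both proofs lean on the same standard fact that a duality of $\PG(n,\K)$ without absolute points is anisotropic -- the paper invokes its contrapositive at the outset (``not anisotropic, hence some $p\in p^\theta$''), while you invoke it at the end -- so neither argument is more elementary on that score.
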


\begin{proof}
Suppose for a contradiction that $\theta$ is not anisotropic. Then some point $p$ is mapped onto a hyperplane $H$ containing $p$. There are two possibilities.
\begin{compactenum}[$(1)$]
\item Suppose $H^\theta=p$. Select $q\in H\setminus\{p\}$. Then $p=H^\theta\in q^\theta\neq H$. If $q\in q^\theta$, then $\{q,H\}$ is symplectic to $\{H^\theta,q^\theta\}=\{p,q^\theta\}$. If $q\notin q^\theta$, then $\{q,H\}$ is special to $\{p,q^\theta\}$. Both are contradictions.
\item Suppose $H^{\theta}\neq p$. Then $H^{\theta^2}\neq H$ and an arbitrary point $q\in pp^{\theta^2}\setminus\{p,p^{\theta^2}\}$  is mapped onto a hyperplane through $H\cap H^{\theta^2}$. The pair $\{q,H\}$ is mapped onto the pair $\{p^{\theta^2},q^\theta\}$. Since $q\neq p^{\theta^2}$, $H\neq q^\theta$ and $p^{\theta^2}\in H$ these two pairs are either symplectic or special, a contradiction.
\end{compactenum} 
We conclude that $\theta$ is anisotropic after all. 
\end{proof}

\subsection{Polar spaces}\label{linewise}
A polar $\{2,2'\}$-kangaroo of a building of type $\mathsf{D}_n$, $n\geq 4$, over the field $\K$, is a collineation of the corresponding polar space $\mathsf{D}_{n,1}(\K)$ mapping each line either to a coplanar one, or to an opposite. We take a broader perspective and look at collineations of all polar spaces of rank at least 3 with that property. We call such collineations \emph{linewise $\{2,2'\}$-kangaroos}.

We reduce the classification of linewise $\{2,2'\}$-kangaroos to the classification of certain domestic collineations.

In the next proof we freely use basic results on polar spaces (see \cite[Chapter~1]{Mal:24}). 

\begin{prop}\label{prop:22'polar}
Let $\theta$ be a non-trivial collineation of a polar space of rank at least~$3$. Then $\theta$ is a linewise $\{2,2'\}$-kangaroo if, and only if, either it is anisotropic, or among all singular subspaces only lines are mapped to opposites. 
\end{prop}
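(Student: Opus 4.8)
The plan is to translate the hypothesis on singular subspaces into a domesticity statement and then run everything through a single ``engine''. Observe first that ``among all singular subspaces only lines are mapped to opposites'' means exactly that $\theta$ is point-domestic and $\ell$-domestic for every $\ell\geq 2$ (recall that two points of a polar space are opposite if and only if they are non-collinear, and two maximal singular subspaces are opposite if and only if they are disjoint). The direction ``anisotropic $\Rightarrow$ kangaroo'' is immediate, since an anisotropic collineation maps every simplex to an opposite one, so every line is sent to an opposite line and hence to distance~$3$, never to $2$ or $2'$. For the remaining implications I will repeatedly use the following \emph{engine}: if a singular subspace $U$ of dimension $\ell$ is mapped to an opposite (hence disjoint and form-complementary) subspace $U^\theta$, then the duality $\delta\colon U\to U^\theta$, $x\mapsto x^\perp\cap U^\theta$, and the collineation $\theta\colon U\to U^\theta$ compose to a correlation $\psi=\theta^{-1}\delta$ of $U$; a point $x\in U$ is absolute for $\psi$ (that is, $x\in\psi(x)$) precisely when $x\perp x^\theta$, i.e.\ exactly when $x$ is \emph{not} mapped to an opposite point.

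The engine yields the forward direction. Suppose $\theta$ is a linewise $\{2,2'\}$-kangaroo. If $\theta$ maps \emph{some} point $p$ to an opposite point, then every line through $p$ is mapped to an opposite line (a line $L\ni p$ carries the non-collinear pair $p,p^\theta$ on $L,L^\theta$, so $L,L^\theta$ is special or opposite, and the kangaroo property forbids $2'$). For any maximal singular subspace $M\ni p$ one checks $M\cap M^\theta=\varnothing$ (a common point $z\neq p$ would be collinear to all of $(pz)^\theta$, contradicting $pz\mapsto$ opposite, and $z=p$ forces $p\perp p^\theta$), so $M$ is mapped to an opposite. Applying the engine to $U=M$: were some point of $M$ not mapped to an opposite, it would be an absolute point $x$ of $\psi$, and any line $L$ through $x$ inside the hyperplane $\psi(x)$ (which exists as $\dim M=r-1\geq 2$) satisfies $x\perp L^\theta$ with $L\cap L^\theta=\varnothing$, a special or symplectic pair, contradicting the kangaroo property. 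Hence every point of every maximal subspace through $p$ is mapped to an opposite, so every point collinear to $p$ is; by connectivity of the collinearity graph (diameter~$2$) \emph{every} point is mapped to an opposite, whence $\theta$ is anisotropic. If instead $\theta$ maps no point to an opposite, then $\theta$ is point-domestic, and the engine applied to any putative $U$ of dimension $\ell\geq 2$ mapped to an opposite gives a contradiction: every point of $U$ is absolute, so $\psi$ is a null polarity; this is impossible for $\ell$ even (a nondegenerate alternating form needs even vector-space dimension), and for $\ell$ odd it again produces a kangaroo-violating line inside $U$. Thus only lines are mapped to opposites.

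For the converse it remains to treat a point-domestic $\theta$ in which only lines are mapped to opposites and show it is a kangaroo; suppose for contradiction a line $L$ is sent to $L^\theta$ at distance $2$ or $2'$. In the \emph{special} case, and in the \emph{symplectic} case where $L\cap L^\theta=\{z\}$ with $L,L^\theta$ non-coplanar, every point $q\in L$ off a distinguished point is collinear to a \emph{unique} point of $L^\theta$; point-domesticity forces $q^\theta$ to be precisely that point for all such $q$, contradicting injectivity of $\theta$. This disposes of both cases at once. The remaining configuration --- $L$ and $L^\theta$ skew inside a common singular $3$-space $S$, which can only occur in rank $\geq 4$ --- is the main obstacle: here all points of $L,L^\theta$ are mutually collinear, so point-domesticity gives no immediate contradiction, and $S$ itself is not mapped to an opposite (as $L^\theta\subseteq S\cap S^\theta$). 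I expect to resolve this by induction on the rank, passing to the residue at a suitable point of $L$, where $S$ becomes a plane and the skew pair degenerates to a lower-rank forbidden configuration, so that the base case (rank~$3$, where maximal singular subspaces are planes and no two disjoint lines lie in a common singular subspace) reduces everything to the two cases already settled.

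Finally, two technical points must be verified. The assertion ``every point mapped to an opposite $\Rightarrow$ anisotropic'' is standard for polar spaces, and in our situation one in fact also obtains every maximal singular subspace mapped to an opposite; the type-parity subtleties of $\mathsf{D}_n$ (where building-opposition of maximal subspaces depends on the parity of $n$) are handled exactly as in the $\mathsf{D}_n$ analysis of \cref{sec:proofs}, working with the geometric notion ``opposite $=$ disjoint'' and, where necessary, in the oriflamme complex. Throughout, the elementary facts about the duality between opposite subspaces and about collinearity in polar spaces are those collected in \cite{Mal:24}.
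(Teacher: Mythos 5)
Your forward direction is sound and in one respect cleaner than the paper's: the paper only proves $0$- and $3$-domesticity and then invokes the classification of opposition diagrams from \cite{PVM:19a,PVM:19b} to rule out opposite planes and higher-dimensional subspaces, whereas your ``engine'' (the duality $x\mapsto\theta^{-1}(x^\perp\cap U^\theta)$ of $U$, all of whose points are absolute under point-domesticity, hence a symplectic polarity by the classical result in \cite{TTVM3}) disposes of every dimension $\ell\geq 2$ at once: impossible for $\ell$ even, and for $\ell$ odd a totally isotropic line of that polarity yields a symplectic pair of lines. Your treatment of the anisotropic alternative and of the two settled converse cases (lines meeting in a point; special pairs) agrees in substance with the paper's arguments.

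The genuine gap is precisely the case you defer: $L$ and $L^\theta$ disjoint inside a common singular $3$-space $\Sigma$ (rank $\geq 4$). The induction you propose cannot be run as described: since $L\cap L^\theta=\varnothing$, no point $x\in L$ is fixed, so $\theta$ carries $\Res(x)$ to $\Res(x^\theta)$ and induces no collineation of a lower-rank polar space to which any inductive hypothesis about kangaroos or domesticity could be applied; composing with some identification of $\Res(x^\theta)$ with $\Res(x)$ destroys exactly the hypotheses you need. What the paper does instead requires no induction: it applies $3$-domesticity not to $\Sigma$ (which, as you correctly observe, is useless, since $L^\theta\subseteq\Sigma\cap\Sigma^\theta$) but to an auxiliary $3$-space. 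Since $L\subseteq\Sigma\cap\Sigma^{\theta^{-1}}$, one chooses a singular $3$-space $\Sigma_0\supseteq L$ in general position, meaning $\Sigma_0\cap\Sigma=\Sigma_0\cap\Sigma^{\theta^{-1}}=L$ and no point of $\Sigma_0\setminus L$ is collinear to all of $\Sigma$ or to all of $\Sigma^{\theta^{-1}}$ (in type $\mathsf{D}_4$ one must first note that $0$-domesticity forces $\theta$ to be type preserving, by \cite{PVM:19a,PVM:19b}), and then checks that $\Sigma_0$ and $\Sigma_0^\theta$ are opposite --- contradicting the hypothesis that among singular subspaces only lines are mapped to opposites. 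Until you either carry out this construction or supply a working substitute, your converse direction is unproved for all polar spaces of rank at least~$4$.
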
 

\begin{proof}
First let $\theta$ be a non-trivial linewise $\{2,2'\}$-kangaroo of a polar space of rank at least~3. In view of the classification of opposition diagrams in \cite{PVM:19a,PVM:19b}, it suffices to show that, if $\theta$ is not anisotropic, then is is $0$-domestic and $3$-domestic, that is, it does not map any point to to an opposite and it does not map any singular $3$-space to an opposite one.

Suppose that $\theta$ maps some point $p$ to an opposite point $p^\theta$. Lines through $p$ are never coplanar with lines through $p'$, hence each line through $p$ is mapped onto an opposite line through $p^\theta$. Consequently, each plane through $p$ is mapped onto an opposite plane through $p^\theta$. Suppose for a contradiction that some point $x\perp p$ is mapped onto a collinear point $x^\theta$. Let $\pi$ be any plane through $p$ and $x$. Since $\pi$ and $\pi^\theta$ are opposite, there is a unique line $L$ in $\pi$ all of whose points are collinear to $x^\theta$. Note that $x\in L$. Then the image $L^\theta$ contains $x^\theta$ and is contained in $\pi^\theta$. Since $\pi$ and $\pi^\theta$ are opposite, they are disjoint, and so $L$ and $L^\theta$are not coplanar. Since $x^\theta\in L^\theta$ is collinear to all points of $L$, the lines $L$ and $L^\theta$ are not opposite either. We conclude that $x^\theta$ is opposite $x$. 

Likewise, all points         collinear to any point in $p^\perp$ are mapped onto an opposite, which implies that $\theta$ is anisotropic. 

Now suppose that $\theta$ maps a singular $3$-space $\Sigma$ to an opposite. Suppose that $\theta$ is not anisotropic. Since $\theta$ is then point-domestic by the previous arguments, the map $\theta_\Sigma$ mapping a point $x$ of $\Sigma$ onto the line $\Sigma\cap(x^\theta)^\perp$ induces a symplectic polarity in $\Sigma$. Let $L$ be a fixed line for that symplectic polarity. Then $L$ and $L^\theta$ are contained in a singular $3$-space, hence not opposite, and are yet not coplanar (because $\Sigma$ and $\Sigma^\theta$ are disjoint). This contradiction shows that $\theta$ is $3$-domestic.

Secondly, let $\theta$ be a non-trivial collineation which is $0$-domestic and $3$-domestic. Suppose $L$ is a line such that $L$ and $L^\theta$ are neither coplanar, nor opposite. If $L$ and $L^\theta$ intersect in a point $p$, then, since lines have at least three points, we can find a point $q\in L\setminus \{p\}$ which is mapped onto a point $q^\theta\neq p$. The points $q$ and $q^\theta$ are opposite, a contradiction. Suppose $L$ is such that $L$ and $L^\theta$ are disjoint but contained in a common singular $3$-space $\Sigma$. Then $\Sigma^{\theta^{-1}}$ contains $L$ and we can find a singular $3$-space $\Sigma_0$ through $L$ such that $\Sigma_0\cap\Sigma=\Sigma_0\cap\Sigma^{\theta^{-1}}=L$ and no point of $\Sigma_0\setminus L$ is collinear to all points of either $\Sigma$ or $\Sigma^{\theta^{-1}}$ (noting that, if the polar space has type $\mathsf{D_4}$, then, by \cite[Corollary~4]{PVM:19a} and \cite[Theorem~1]{PVM:19b}, $0$-domesticity implies that $\theta$ is type preserving). Now one checks easily that $\Sigma_0$ and $\Sigma_0^\theta$ are opposite, a contradiction. Suppose at last that $L$ and $L^\theta$ are   disjoint and there exists a unique point $p\in L$ collinear to all points of $L^\theta$. Then there exists a unique point $q\in L^\theta$ collinear to all points of $L$. Each point $x\in L$ distinct from $p$ and $q^{\theta^{-1}}$ is mapped onto an opposite point. 
\end{proof}

So, a polar $\{2,2'\}$-kangaroo which is not anisotropic has opposition diagram $\mathsf{B}_{n,1}^2$, $n\geq 3$, or $\mathsf{D}_{n,1}^2$, $n\geq 4$ (the case $n=3$ is contained in \cref{prosp}). Such collineations are classified by \cite[Theorem~3]{PVMclass}. Without going into details, we mention that these are always axial collineations (hence long root elations), except if the polar space is symplectic and the characteristic of the underlying field is not $2$ (then we have certain homologies), or the polar space has rank $3$ (and then also collineations qualify which pointwise fix a substructure isomorphic to a polar space of rank 3 inducing a Baer subplane in each plane that contains at least two fixed points).   

\subsection{Metasymplectic spaces}
Metasymplectic spaces are the natural point-line geometries for the buildings of type $\mathsf{F_4}$. Up to now, they only appeared in this paper as fix structures in larger buildings. Since we only need properties of them in this last paragraph of the paper for a non-essential digression to our main results, we refer the reader to the existing literature concerning precise definitions and basic properties. Good sources are \cite{Coh:82,Shult,Tits:74} and the accepted manuscript \cite{Lam-Mal:23}. Let us just mention that metasymplectic spaces behave very much like long root subgroup geometries in that they are parapolar spaces for which two different points are either collinear, special, symplectic or opposite (and in the latter case they are at distance $3$ from each other in the collinearity graph). The symps are polar spaces of rank~$3$ and form the point set of the \emph{dual} metasymplectic space (where the lines are the sets of symps containing a given singular plane). In that dual space, symplectic points correspond to symps intersecting in just a point. 

Here is our classification result.

\begin{prop}\label{metasymp}
Let $\theta$ be a $\{2,2'\}$-kangaroo of a metasymplectic space. Then $\theta$ is either anisotropic, or symp-domestic. In the latter case, this means that the opposition diagram for $\theta$ is either $\mathsf{F_{4;1}^1}$ or $\mathsf{F_{4;1}^4}$. Conversely, every domestic collineation with such opposition diagram is a $\{2,2'\}$-kangaroo in the corresponding metasymplectic space (that is, of type $\mathsf{F_{4,1}}$ if the opposition diagram is $\mathsf{F_{4;1}^4}$, and $\mathsf{F_{4,4}}$ if the opposition diagram is $\mathsf{F_{4;1}^1}$). 
\end{prop}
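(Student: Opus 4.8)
The plan is to follow the template of the proof of \cref{prop:exceptional22'}, splitting according to whether or not some symp is sent to an opposite, and then to add a converse special to type $\mathsf{F}_4$. Throughout I write $\mathcal{M}$ for the given metasymplectic space; since $\mathsf{F}_{4,1}$ and $\mathsf{F}_{4,4}$ are dual it suffices to argue in one of them and read off the dual statement. The local trichotomy furnished by \cref{pointsymp}, \cref{specialspecial} and \cref{pointlinesymp} is purely geometric and so holds verbatim in $\mathcal{M}$; these will be the basic tools. The first useful observation is that symp-domesticity is here equivalent to domesticity: a chamber sent to an opposite chamber has its symp-vertex sent to an opposite, so if no symp is sent to an opposite then $\theta$ is domestic. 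Thus the asserted dichotomy ``anisotropic or symp-domestic'' is the same as ``either some symp is sent to an opposite, in which case I claim $\theta$ is anisotropic, or $\theta$ is domestic''.

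Suppose first that $\theta$ maps a symp $\xi$ to an opposite symp $\xi^\theta$. For $p\in\xi$ the image $p^\theta$ lies in $\xi^\theta$, so by \cref{pointsymp} it is symplectic, special or opposite to $p$; the $\{2,2'\}$-kangaroo hypothesis excludes the first two, forcing $p^\theta$ opposite $p$. Hence every point of $\xi$ is mapped to an opposite point. I would then propagate this to all of $\mathcal{M}$. Take $q\perp p$ with $p\in\xi$ and suppose $q^\theta$ is not opposite $q$; by the kangaroo hypothesis $q^\theta$ is then fixed or collinear to $q$. If $q^\theta=q$, then applying $\theta$ to $q\perp p$ gives $q\perp p^\theta$, so $q$ is collinear to the two opposite points $p$ and $p^\theta$, violating the triangle inequality in the collinearity graph. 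If instead $q\perp q^\theta$ with $q^\theta\ne q$, then $p\perp q\perp q^\theta\perp p^\theta$ with $p$ opposite $p^\theta$, so \cref{specialspecial} makes $\{q,p^\theta\}$ special; letting $p$ range over the line of $\xi$ to which $q$ is collinear (\cref{pointlinesymp}) and tracking images, one extracts a point whose image is special or symplectic, the required contradiction. Thus $q^\theta$ is opposite $q$, and connectivity of the collinearity graph yields point-anisotropy, hence anisotropy (a point-anisotropic collineation of a metasymplectic space maps all chambers to opposite, cf.\ \cite{DPV:13}).

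If $\theta$ is instead symp-domestic, hence domestic, I would feed it into the classification of domestic collineations of $\mathsf{F}_4$ buildings in \cite{PVM:19a,PVM:19b} together with the explicit descriptions in \cite{PVMexc}. The kangaroo condition forces line- and plane-domesticity: were a line or plane sent to an opposite, then choosing in the relevant residue a direction opposite the appropriate ones and arguing exactly as in \cref{dom=kang} would send some point to a special or symplectic point, which is forbidden. Therefore only the point-node survives, and the opposition diagram is one of the two end-node diagrams $\mathsf{F_{4;1}^1}$, $\mathsf{F_{4;1}^4}$, matched to the two metasymplectic spaces as in the statement. For the converse, let $\theta$ be a domestic collineation carrying one of these diagrams and regard it in the corresponding space, where it is symp-domestic; by the classification it is a (central long) root elation. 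I would then verify directly that no point is sent to distance $2$ or $2'$: if $p^\theta$ were symplectic to $p$ the symp they span would be stabilised, and symp-domesticity together with the elation structure of \cite{PVMexc} collapses this to a fixed point or an opposite, while a special image is excluded through \cref{specialspecial} by the same dichotomy. Hence every point has displacement in $\{0,1,3\}$ and $\theta$ is a $\{2,2'\}$-kangaroo.

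The step I expect to be the genuine obstacle is the propagation in the second paragraph from ``all points of one symp opposite'' to ``anisotropic''. Because the symps of $\mathsf{F}_4$ are \emph{not} of hyperbolic type, the uniqueness-of-second-maximal-singular-subspace mechanism underpinning \cref{opp2'} is unavailable, so the clean connectivity argument used in \cref{prop:exceptional22'} cannot simply be transcribed. The collinear sub-case $q\perp q^\theta$ must instead be closed by a more careful analysis, tracking the special pairs produced by \cref{specialspecial} along an entire line of $\xi$ via \cref{pointlinesymp}; making this robust across all positions of $q$ relative to $\xi$ (collinear, symplectic, special) is the delicate part, and it is precisely here that the \emph{surprising} validity of the result for type $\mathsf{F}_4$ has to be earned by hand rather than imported from the simply laced argument.
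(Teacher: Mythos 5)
Your reduction of the first statement to the case where some symp $\xi$ is mapped to an opposite symp, and your deduction that every point of $\xi$ is then mapped to an opposite point, both match the paper. But the propagation step, which you yourself flag as ``the genuine obstacle'', is a genuine gap and not merely a delicate detail. Your point-by-point scheme stalls exactly where you say it does: in the sub-case $q\perp q^\theta$, \cref{specialspecial} gives that $\{q,p^\theta\}$ and $\{p,q^\theta\}$ are special, but these are not pairs of the form $\{x,x^\theta\}$, so the kangaroo hypothesis says nothing about them; ``letting $p$ range over the line of $\xi$ to which $q$ is collinear'' is a hope, not an argument, and you never exhibit a point whose image is at distance $2$ or $2'$. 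The paper closes precisely this gap by propagating at the level of symps rather than points: for a symp $\zeta$ with $\zeta\cap\xi$ a plane $\pi$, if $\zeta^\theta$ is not opposite $\zeta$ then (since $\pi^\theta$ is opposite $\pi$) either $\zeta\cap\zeta^\theta$ is a single point $x$ --- ruled out because any $y\in\pi\cap x^\perp$ would then have $y^\theta$ collinear, symplectic or special to $y$, contradicting that points of $\xi$ go to opposites --- or $\zeta\cap\zeta^\theta=\varnothing$ with a symp $\xi_0$ meeting both in planes $\alpha,\alpha'$; in that case the kangaroo hypothesis forces every point of $\alpha$ to be mapped to a collinear point of $\alpha'$, and then $x\mapsto(x^\theta)^\perp\cap\alpha$ is a duality of the projective plane $\alpha$ all of whose points are absolute, contradicting \cite[Lemma~3.2]{TTVM3}. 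This plane-duality trick is the new idea that replaces \cref{opp2'} in type $\mathsf{F}_4$, and it is exactly what your proposal lacks.

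The converse also contains an error, not just a gap. You claim that, by the classification, a domestic collineation with opposition diagram $\mathsf{F_{4;1}^1}$ or $\mathsf{F_{4;1}^4}$ is a (central long) root elation, and you verify the kangaroo property only via ``the elation structure''. This is false: by \cite[Main Result]{Lam-Mal:23} (quoted in the paper immediately after the proposition), such collineations may instead fix a geometric hyperplane arising from an extended equator geometry --- these include the homologies $h_\alpha(-1)$ of \cref{sec:counter}, whose fix structure is a weak subbuilding with thick frame of type $\mathsf{B_4}$ --- or a split $\mathsf{F}_4$ long root subgeometry inside a building with Tits index $\mathsf{^2E_{6,4}^2}$. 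Your argument covers neither class. The paper's converse avoids the classification altogether: choosing the duality class so that $\theta$ is symp-domestic, \cite[Lemma~5.2.1]{Lam-Mal:23} rules out special pairs $\{x,x^\theta\}$, and if $\{x,x^\theta\}$ were symplectic then \cite[Lemma~5.2.2$(iv)$]{Lam-Mal:23} makes the symp $\xi$ they span stabilised, whence a symp $\zeta$ with $\zeta\cap\xi=\{x\}$ satisfies $\zeta^\theta\cap\xi=\{x^\theta\}$ and $\zeta,\zeta^\theta$ are opposite, contradicting symp-domesticity. (A smaller point: in the symp-domestic branch of the forward direction, your appeal to a \cref{dom=kang}-style argument to force line- and plane-domesticity is unnecessary; once the symp node is not encircled, the list of admissible $\mathsf{F}_4$ opposition diagrams already excludes $\mathsf{F_{4;2}}$ and $\mathsf{F_{4;4}}$, which is all the paper uses.)
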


\begin{proof}
First assume that $\theta$ is a $\{2,2'\}$-kangaroo of the metasymplectic space $\Gamma=(X,\cL)$. If $\theta$ does not map any symp to an opposite, the result follows from the classification of opposition diagrams in \cite{PVM:19a}.  

So, suppose some symp $\xi$ is mapped onto an opposite symp. We want to prove an analogue of \cref{opp2'} for metasympledtic spaces. However, in view of \cref{sec:counter}, such analogue is only possible if we bring in an additional assumption, and that is exactly the $2$-kangaroo assumption. Indeed, each point of $\xi$ is either symplectic, special or opposite each point of $\xi^\theta$. By the $\{2,2'\}$-kangaroo assumption, points of $\xi$ are mapped onto opposite points. Hitherto, this is similar to the proof of \cref{prop:exceptional22'}. But now we have to deviate, because we cannot use \cref{opp2'}, and its proof certainly does not extend to the $\mathsf{F_4}$ case as metasymplectic spaces do not contain maximal singular subspaces that are not contained in symps. 

Let $\zeta$ be a symp intersecting $\xi$ in a plane $\pi$. Suppose for a contradiction that $\zeta^\theta$ is not opposite $\zeta$. Since $\pi^\theta$ is opposite $\pi$, the only two possibilities are that $\zeta\cap\zeta^\theta$ is a unique point $x$, or $\zeta$ is disjoint from $\zeta^\theta$, but there exists a symp $\xi_0$ intersecting both in planes. First assume $\zeta\cap\zeta^\theta=\{x\}$. Any point $y\in\pi\cap x^\perp$ is mapped onto a point of $\zeta^\theta$, hence collinear or symplectic to $x$, which implies that $\{y,y^\theta\}$ is either collinear, symplectic or special, a contradiction. Hence $\zeta\cap\zeta^\theta=\varnothing$ and there exists a symp $\xi_0$ intersecting both in planes, say $\alpha$ and $\alpha'$, respectively. Now the only points of $\zeta^\theta$ not symplectic or special to some point of $\alpha$ are those of $\alpha'$. Hence each point of $\alpha$ is sent to a collinear point in $\alpha'$. But then the map $\alpha\to\alpha:x\mapsto (x^\theta)^\perp\cap\alpha$ is a duality of $\alpha$ each point of which is absolute, a contradiction (see for instance \cite[Lemma~3.2]{TTVM3}).

Hence we have shown that $\zeta$ is opposite $\zeta'$. Then again, all points of $\zeta$ are mapped onto opposite points, and continuing like this, by connectivity, $\theta$ is anisotropic.

Now suppose that $\theta$ is domestic with one of the opposition diagrams $\mathsf{F_{4;1}^1}$ or $\mathsf{F_{4;1}^4}$. We have to choose the duality class of the metasymplectic space such that $\theta$ does not map symps to opposites, and hence only maps points to opposites. By \cite[Lemma~5.2.1]{Lam-Mal:23} no pair $\{x,x^\theta\}$, $x\in X$, is special. Suppose now for a contradiction that some point $x\in X$ is mapped onto a symplectic point $x'$. By \cite[Lemma~5.2.2$(iv)$]{Lam-Mal:23}, the symp $\xi$ containing $x$ and $x'$ is stabilised. Select a symp $\zeta$ with $\zeta\cap\xi=\{x\}$. Then $\zeta^\theta\cap\xi=\{x'\}$ and since $\{x,x'\}$ is symplectic, properties of metasymplectic spaces imply that $\zeta$ and $\zeta^\theta$ are opposite, a contradiction. 
\end{proof}

There is now an explicit list of collineations of metasymplectic spaces with opposition diagram $\mathsf{F_{4;1}^1}$ or $\mathsf{F_{4;1}^4}$, see \cite[Main Result]{Lam-Mal:23}. They are either the long root elations, or two specific examples given by their fix structure: either the fix structure is a geometric hyperplane given by an extended equator geometry together with its tropics geometry in the dual of the long root subgroup geometry of a split building of type $\mathsf{F_4}$ (equivalently, the fix structure in building theoretic terms is a weak subbuilding with thick frame of type $\mathsf{B_4}$), or the fix structure is a subspace isomorphic to the long root subgroup geometry of a split building of type $\mathsf{F_4}$ and $\Gamma$ corresponds to the long root subgroup geometry of a building of absolute type $\mathsf{F_4}$ and relative type $\mathsf{E_6}$ (using the notation of \cite[Table~II]{Tits:66}, with Tits index $\mathsf{^2E_{6,4}^2}$), 

\begin{remark}
As a final remark, we make the following observation. The main purpose of the present paper was to provide an alternative characterisation for uniclass automorphisms of spherical buildings in the simply laced case. As a digression we could also classify $\{2,2'\}$-kangaroo collineations in the Lie incidence geometries that behave like long root subgroup geometries. In \cite[\S7.3]{PVMexc4}, we classified, also as a digression, type preserving automorphisms of spherical buildings whose displacement spectra is contained in the union of two conjugacy classes, among which is the trivial class (call these for now \emph{lower biclass automorphisms}). The list of these automorphisms is strikingly similar to the list of polar $\{2,2'\}$-kangaroo collineations obtained above. Indeed, in both cases, one reduces to opposition diagrams with only one orbit encircled, which is necessarily the polar node(s). By the special nature of the long root subgroup geometry of type $\mathsf{C}_n$ (not containing collinear or special pairs of points), the opposition diagram $\mathsf{B}_{n;1}^1$ does not turn up in our analysis above of $\{2,2'\}$-kangaroos, whereas it does turn up in the context of lower biclass automorphisms. This makes it reasonable to conjecture that there will be a very tight connection between \emph{upper biclass oppomorphisms} of spherical buildings (the displacement is contained in two conjugacy classes of the Weyl group among which is the longest word) and polar $\{1,2\}$-kangaroo automorphisms. 
\end{remark}

\end{document}